\pdfoutput=1
\documentclass[reqno, 11pt]{amsart}
\usepackage{geometry}
\geometry{left=27.5mm,right=27.5mm, top=25mm, bottom=25mm}

\makeatletter
\let\origsection=\section \def\section{\@ifstar{\origsection*}{\mysection}}
\def\mysection{\@startsection{section}{1}\z@{.7\linespacing\@plus\linespacing}{.5\linespacing}{\normalfont\scshape\centering\S}}
\makeatother

\usepackage{amsmath,amssymb,amsthm}
\usepackage{mathrsfs}
\usepackage{mathabx}\changenotsign
\usepackage{dsfont}
\usepackage{bbm}

\usepackage[open,openlevel=2,atend]{bookmark}

\usepackage[
	backend=biber,
	style=numeric,
	giveninits=true,
	url=true,
	isbn=false,
	maxbibnames=9,
	hyperref=auto,
	backref=true,
	]{biblatex}
\usepackage{csquotes}
\DeclareFieldFormat[misc]{date}{Preprint (#1)}

\addbibresource{euler.bib}

\usepackage{xcolor}
\hypersetup{
    colorlinks,
    linkcolor={red!60!black},
    citecolor={green!60!black},
    urlcolor={blue!60!black}
}

\usepackage[T1]{fontenc}
\usepackage{lmodern}
\usepackage[babel]{microtype}
\usepackage[english]{babel}
\usepackage{verbatim}

\numberwithin{equation}{section}
\numberwithin{figure}{section}

\usepackage[shortlabels, inline]{enumitem}

\let\polishlcross=\l
\def\l{\ifmmode\ell\else\polishlcross\fi}

\let\emptyset=\varnothing
\let\setminus=\smallsetminus

\makeatletter
\def\moverlay{\mathpalette\mov@rlay}
\def\mov@rlay#1#2{\leavevmode\vtop{   \baselineskip\z@skip \lineskiplimit-\maxdimen
   \ialign{\hfil$\m@th#1##$\hfil\cr#2\crcr}}}
\newcommand{\charfusion}[3][\mathord]{
    #1{\ifx#1\mathop\vphantom{#2}\fi
        \mathpalette\mov@rlay{#2\cr#3}
      }
    \ifx#1\mathop\expandafter\displaylimits\fi}
\makeatother

\DeclareFontFamily{U}  {MnSymbolC}{}
\DeclareSymbolFont{MnSyC}         {U}  {MnSymbolC}{m}{n}
\DeclareFontShape{U}{MnSymbolC}{m}{n}{
    <-6>  MnSymbolC5
   <6-7>  MnSymbolC6
   <7-8>  MnSymbolC7
   <8-9>  MnSymbolC8
   <9-10> MnSymbolC9
  <10-12> MnSymbolC10
  <12->   MnSymbolC12}{}
\DeclareMathSymbol{\powerset}{\mathord}{MnSyC}{180}

\usepackage{tikz}
\usetikzlibrary{calc,decorations.pathmorphing}
\pgfdeclarelayer{background}
\pgfdeclarelayer{foreground}
\pgfdeclarelayer{front}
\pgfsetlayers{background,main,foreground,front}

\newcommand{\qedge}[7]{

	\ifx\relax#4\relax
		\def\qoffs{0pt}
	\else
		\def\qoffs{#4}
	\fi

	\def\qhedge{
		($#1+#3!\qoffs!-90:#2-#3$) --
		($#2+#1!\qoffs!-90:#3-#1$) --
		($#3+#2!\qoffs!-90:#1-#2$) -- cycle}

	\coordinate (12) at ($#1!\qoffs!90:#2$);
	\coordinate (13) at ($#1!\qoffs!-90:#3$);
	\coordinate (23) at ($#2!\qoffs!90:#3$);
	\coordinate (21) at ($#2!\qoffs!-90:#1$);
	\coordinate (31) at ($#3!\qoffs!90:#1$);
	\coordinate (32) at ($#3!\qoffs!-90:#2$);
	
	\def\nqhedge{
		(13) let \p1=($(13)-#1$), \p2=($(12)-#1$) in
			arc[start angle={atan2(\y1,\x1)}, delta angle={atan2(\y2,\x2)-atan2(\y1,\x1)-360*(atan2(\y2,\x2)-atan2(\y1,\x1)>0)}, x radius=\qoffs, y radius=\qoffs] --
		(21) let \p1=($(21)-#2$), \p2=($(23)-#2$) in
			arc[start angle={atan2(\y1,\x1)}, delta angle={atan2(\y2,\x2)-atan2(\y1,\x1)-360*(atan2(\y2,\x2)-atan2(\y1,\x1)>0)}, x radius=\qoffs, y radius=\qoffs] --
		(32) let \p1=($(32)-#3$), \p2=($(31)-#3$) in
			arc[start angle={atan2(\y1,\x1)}, delta angle={atan2(\y2,\x2)-atan2(\y1,\x1)-360*(atan2(\y2,\x2)-atan2(\y1,\x1)>0)}, x radius=\qoffs, y radius=\qoffs] --
		cycle}

		\ifx\relax#5\relax
		\def\qlwidth{1pt}
	\else
		\def\qlwidth{#5}
	\fi
	
		\ifx\relax#7\relax
		\fill \nqhedge;
	\else
		\fill[#7]\nqhedge;
	\fi

		\ifx\relax#6\relax
		\draw[dotted, line width=\qlwidth,rounded corners=\qoffs]\nqhedge;
	\else
		\draw[dotted, line width=\qlwidth,#6]\nqhedge;
	\fi
}

\let\epsilon=\varepsilon
\let\eps=\epsilon
\let\rho=\varrho
\let\theta=\vartheta

\def\NN{{\mathds N}}

\DeclareMathOperator{\Ends}{ends}

\newcommand{\cC}{\mathcal{C}}

\newcommand{\cP}{\mathcal{P}}

\newcommand{\cS}{\mathcal{S}}

\newcommand{\ccC}{\mathcal{C}}

\newtheoremstyle{note}  {4pt}  {4pt}  {\sl}  {}  {\bfseries}  {.}  {.5em}          {}
\newtheoremstyle{introthms}  {3pt}  {3pt}  {\itshape}  {}  {\bfseries}  {.}  {.5em}          {\thmnote{#3}}
\newtheoremstyle{remark}  {2pt}  {2pt}  {\rm}  {}  {\bfseries}  {.}  {.3em}          {}

\theoremstyle{plain}
\newtheorem{theorem}{Theorem}[section]
\newtheorem{lemma}[theorem]{Lemma}

\newtheorem{cor}[theorem]{Corollary}

\newtheorem{claim}[theorem]{Claim}
\newtheorem{quest}[theorem]{Question}

\theoremstyle{note}
\newtheorem{definition}[theorem]{Definition}

\theoremstyle{remark}
\newtheorem{remark}[theorem]{Remark}

\newtheorem{exmpl}[theorem]{Example}

\usepackage{accents}

\usepackage{lineno}
\newcommand*\patchAmsMathEnvironmentForLineno[1]{
\expandafter\let\csname old#1\expandafter\endcsname\csname #1\endcsname
\expandafter\let\csname old#1\expandafter\endcsname\csname end#1\endcsname
\renewenvironment{#1}
{\linenomath\csname old#1\endcsname}
{\csname oldend#1\endcsname\endlinenomath}}

\AtBeginDocument{
}

\def\outd{d^+\!}
\def\ind{d^-\!}
\newcommand{\NATS}{\mathds{N}}
\usepackage{cleveref}
\newcommand{\expectation}{\mathbf{E}}
\newcommand{\probability}{\mathbf{P}}
\usepackage{stmaryrd}
\newenvironment{proofclaim}[1][Proof of the claim]{\begin{proof}[#1]}{\end{proof}}

\begin{document}

\title[Cycle decompositions in $3$-uniform hypergraphs]{Cycle decompositions in $3$-uniform hypergraphs}
\author[S. Piga]{Sim\'on Piga}
\address{Fachbereich Mathematik, Universit\"{a}t Hamburg, Hamburg, Germany}
\email{simon.piga@uni-hamburg.de}
\author[N. Sanhueza-Matamala]{Nicol\'as Sanhueza-Matamala}
\address{The Czech Academy of Sciences, Institute of Computer Science, Pod Vod\'{a}renskou v\v{e}\v{z}\'{\i} 2, 182 07 Prague, Czechia}
\email{nicolas@sanhueza.net}
\thanks{
The research leading to these results was supported by the Czech Science Foundation, grant number GA19-08740S with institutional support RVO: 67985807 (N.~Sanhueza-Matamala) and by ANID/CONICYT Acuerdo Bilateral DAAD/62170017 through a Ph.D. Scholarship (S.~Piga)}

\keywords{Hypergraphs, Euler tours, cycles}

\begin{abstract}
	We show that $3$-graphs on $n$ vertices whose codegree is at least $(2/3 + o(1))n$ can be decomposed into tight cycles and admit Euler tours, subject to the trivial necessary divisibility conditions.
	We also provide a construction showing that our bounds are best possible up to the $o(1)$ term.
	All together, our results answer in the negative some recent questions of Glock, Joos, Kühn, and Osthus.
\end{abstract}

\maketitle

\section{Introduction}

\subsection{Cycle decompositions}

Given a~$k$-uniform hypergraph~$H$, a~\emph{decomposition of~$H$} is a collection of subgraphs of $H$ such that every edge of~$H$ is covered exactly once.
When these subgraphs are all isomorphic copies of a single hypergraph~$F$ we say that it is an~\emph{$F$-decomposition}, and that $H$ is \emph{$F$-decomposable}.
Finding decompositions of hypergraphs is one of the oldest problems in combinatorics.
For instance, the well-known problem of the existence of designs and Steiner systems can be cast as the problem of decomposing a complete hypergraph into smaller complete hypergraphs of a fixed size.
Thanks to the recent breakthroughs of Keevash~\cite{Keevash2014} and Glock, Kühn, Lo, and Osthus~\cite{GKLO2020} our knowledge about hypergraph decompositions has increased substantially; but many open questions remain.
We refer the reader to the survey of Glock, Kühn, and Osthus~\cite{GlockKuhnOsthus2020} for an overview of the state of the art.

Here we focus in decompositions in which the subgraphs are all cycles.
For $k \ge 2$ and $\ell \geq k+1$, the \textit{$k$-uniform tight cycle of length~$\ell$} is the $k$-graph $C^k_{\ell}$ whose vertices are $\{v_1, v_2, \dots, v_{\ell} \}$ and whose edges are all $k$-sets of consecutive vertices of the form $\{ v_i,v_{i+1}, \dotsc, v_{i+k-1} \}$ for $1 \leq i \leq \ell$, where the indices are understood modulo $\ell$.
Since no other kind of hypergraph cycles will be considered, we will refer to tight cycles as \emph{cycles}.
If $k$ is clear from the context, we will just write $C_{\ell}$ instead of $C^k_{\ell}$.

Given a vertex~$x$ in~$H$ the \emph{degree of~$x$,~$\deg_H(x)$}, is the number of edges that contain~$x$. 
For a positive integer~$k$, when the degree of every vertex of a hypergraph~$H$ is divisible by~$k$ we say that~$H$ is~\emph{$k$-vertex-divisible}.
Note that in a $k$-uniform cycle every vertex has degree exactly $k$.
This implies that, for any $\ell \ge k+1$, any $C^k_{\ell}$-decomposable $k$-graph $H$ must necessarily be $k$-vertex-divisible.
Another obvious necessary condition to find $C_{\ell}$-decompositions in $H$ is that the total number of edges of $H$ must be divisible by $\ell$.
If $H$ satisfies these two conditions, we say that $H$ is \emph{$C_{\ell}$-divisible}.

However, not every $C_{\ell}$-divisible $k$-graph is $C_{\ell}$-decomposable.
For instance, a cycle $C_{2\ell}$ is $C_{\ell}$-divisible, but clearly does not have a $C_{\ell}$-decomposition.
This motivates the search of easily-checkable sufficient conditions which, together with the necessary $C_{\ell}$-divisibility, already force the existence of $C_{\ell}$-decompositions.
A natural choice is to consider degree conditions, which in hypergraphs can be expressed in terms of \emph{codegree}.
For $k$-uniform graphs and a set $S$ of $(k-1)$ vertices,
we define the \emph{codegree of~$S$},~$\deg_H(S)$, as the number of edges of $H$ that contain all of $S$.
We denote the minimum (resp. maximum) codegree of a hypergraph~$H$ over all $S$ by~$\delta_{k-1}(H)$ (resp. $\Delta_{k-1}(H)$).
The \emph{$C^k_{\ell}$-decomposition threshold} $\delta_{\smash{C^k_{\ell}}}(n)$ is the minimum $d$ such that every $C^k_{\ell}$-divisible $k$-graph $H$ on $n$ vertices with $\delta_{k-1}(H) \geq d$ is $C^k_{\ell}$-decomposable.
Moreover, it is convenient to define~$\delta_{\smash{C^k_{\ell}}} = \limsup_{n\rightarrow \infty}\delta_{\smash{C^k_{\ell}}}(n)/n$.
Again, we may omit $k$ from the notation and write $\delta_{\smash{C_{\ell}}}(n)$ and $\delta_{\smash{C_{\ell}}}$.
The very general results of~\cite{GKLO2020} imply that $\delta_{\smash{C^k_{\ell}}} < 1$ for all $k \ge 2$ and $\ell > k$, but no precise values are known when $k \ge 3$.

In our main result, we find the value of $\delta_{\smash{C^3_{\ell}}}$ for all but finitely many values of $\ell$.

\begin{theorem} \label{theorem:ldecomposcodegree}
    Suppose $\ell$ satisfies one of the following:
    \begin{enumerate*}[{\upshape (i)}]
        \item $\ell$ is divisible by $3$ and at least $9$, or
        \item $\ell \ge 10^7$.
    \end{enumerate*}
    Then $\delta_{\smash{C^3_{\ell}}} =2/3$.
\end{theorem}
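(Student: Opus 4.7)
The statement consists of two matching bounds. For the lower bound $\delta_{\smash{C^3_{\ell}}} \geq 2/3$, I would exhibit, for infinitely many $n$ satisfying the divisibility conditions associated with $C_\ell$, a $C_\ell$-divisible 3-graph $H_n$ on $n$ vertices with $\delta_2(H_n) \geq (2/3 - o(1))n$ admitting no $C_\ell$-decomposition. A natural candidate is an ``unbalanced bipartite-type'' construction: partition $V = A \sqcup B$ with sizes tuned so that pairs of vertices in each part still have roughly $2n/3$ common neighbours, then add or remove a carefully chosen set of edges so that some linear invariant of $H$ (for instance, the count of edges having an odd number of vertices in $A$, reduced modulo $\ell$) takes a nonzero residue. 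Since this invariant vanishes on every copy of $C_\ell$, it is preserved by any putative $C_\ell$-decomposition, ruling one out.

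For the upper bound I would use the absorbing-method framework refined by the general decomposition results of Glock, Kühn, Lo, and Osthus~\cite{GKLO2020}. The plan has three ingredients. First, an \emph{approximate decomposition}: every sufficiently large 3-graph $H$ with $\delta_2(H) \geq (2/3 + \eps)n$ admits a packing of edge-disjoint copies of $C_\ell$ covering all but an $\eps$-fraction of $E(H)$; this follows by a nibble-type argument once one verifies enough regularity of the auxiliary hypergraph whose vertices are edges of $H$ and whose hyperedges are the $C_\ell$-copies, which is immediate from the codegree hypothesis. Second, an \emph{absorber}: a small subgraph $A \subseteq H$ such that for every sparse, $C_\ell$-divisible leftover $L \subseteq E(H) \setminus E(A)$, the edge set $E(A) \cup L$ admits a $C_\ell$-decomposition. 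Third, iterative absorption: reserve $A$, apply the approximate decomposition to $H$ after removing $E(A)$, and then use the absorbing property to swallow the $o(n^3)$ residual edges.

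The main obstacle is the construction of the absorber at codegree exactly $(2/3+\eps)n$. Concretely, one needs to manufacture an abundant supply of \emph{switchers}: pairs of $C_\ell$-decompositions of a common small vertex set that differ by a prescribed, controllable symmetric difference, and which can be spliced together to correct any small divisibility-compatible discrepancy. Establishing that every host 3-graph above the $2/3$ threshold contains enough switchers, under the $C_\ell$-divisibility of the leftover, is the delicate step, and this is precisely where the codegree bound is tight: the lower bound construction shows that below $2n/3$ the required switchers for general $\ell$ cease to exist. The dichotomy in the hypothesis ($3 \mid \ell$ and $\ell \geq 9$, or $\ell \geq 10^7$) should reflect two regimes for the switcher construction: if $3 \mid \ell$ a compact periodic gadget aligned with the $3$-uniformity can be built directly, while for very long cycles there is enough length in each copy of $C_\ell$ to assemble flexible switchers from long tight paths regardless of residue constraints; the intermediate cases are exactly those that elude both constructions.
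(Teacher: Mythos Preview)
Your high-level outline for the upper bound (iterative absorption: reserve an absorber, find an approximate decomposition, mop up the leftover) matches the paper's strategy, but you misidentify where the dichotomy on $\ell$ enters. In the paper the Absorbing Lemma (Lemma~\ref{lemma:absorbinglemma}) works uniformly for all $\ell\ge 7$; the absorbers are built from ``gadgets'' that manipulate a residual digraph attached to a tour--trail decomposition, and none of this cares about $\ell\bmod 3$ or the size of $\ell$ beyond $\ell\ge 7$. The two regimes arise instead inside the Cover-Down Lemma, specifically in obtaining a \emph{well-behaved approximate $C_\ell$-decomposition} (one whose leftover has small maximum codegree, not merely few edges). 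When $3\mid\ell$ the cycle $C_\ell$ is $3$-partite, so its Tur\'an number is degenerate and a greedy packing plus local repairs suffices (Lemma~\ref{lemma:wellbehaved}); for general $\ell$ this fails, and the paper appeals to the fractional $C_\ell$-decomposition theorem of Joos and K\"uhn together with a nibble, which requires $\ell\ge 10^7$ to verify the connectedness hypothesis (Lemma~\ref{lemma:wellbehaved2}). Your plan to get the approximate decomposition ``immediately from the codegree hypothesis'' via regularity of the auxiliary hypergraph is exactly the step that is not immediate for small non-tripartite $\ell$, and the paper's final remarks flag this as the genuine bottleneck.

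Your proposed lower-bound construction has a real gap. A bipartition $V=A\sqcup B$ with the usual associated $3$-graphs gives minimum codegree around $n/2$, not $2n/3$; there is no evident way to push a two-part construction up to $2n/3$. Moreover, the invariant you suggest (number of edges with an odd number of vertices in $A$, taken mod $\ell$) is not obviously constant on copies of $C_\ell$, and you give no argument for this. The paper's construction (Section~\ref{section:counterexample}) is genuinely tripartite: one takes three clusters $V_0,V_1,V_2$ of sizes roughly $n/3$ with residues $0,1,2\bmod 3$, keeps exactly the triples whose label-sum is nonzero mod $3$, and shows via a delicate word-combinatorics argument (Lemma~\ref{lemma:crucial}) that every tour $W$ satisfies $|W[V_1,V_1,V_2]|\equiv |W[V_1,V_2,V_2]|\bmod 3$, while the host graph violates this congruence globally. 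This mod-$3$ invariant, not a mod-$\ell$ one, is what forces the $2/3$ threshold; your sketch does not contain the key idea.
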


Theorem~\ref{theorem:ldecomposcodegree} implies an interesting contrast with respect to what is known for $C^2_{\ell}$-decomposition thresholds, which we now recall.
In graphs (i.e. $2$-uniform hypergraphs), the codegree conditions default to conditions on minimum degree.
Barber, Kühn, Lo, and Osthus~\cite{BKLO2016} introduced the technique of \emph{iterative absorption} to study $F$-decompositions in graphs
---this technique is also crucial to our present work, and will be reviewed in detail in Section~\ref{Section:IterativeAbsorption}.
In particular, for cycle decompositions in graphs, their work implies that $\delta_{\smash{C_{\ell}}}(n) \leq \delta_{\smash{C_{\ell}}}^\ast(n)  + o(n)$.
Here, $\delta_{\smash{C_{\ell}}}^\ast(n)$ is the minimum degree which guarantees the existence of `fractional $C_{\ell}$-decompositions' in $n$-vertex graphs.
This notion corresponds to the natural fractional relaxation of decompositions (we define and discuss this in Section~\ref{subsection:wellbehaved}).
Let $\delta_{\smash{C_{\ell}}}^\ast = \limsup_{n \rightarrow \infty} \delta_{\smash{C_{\ell}}}^\ast(n)/n$.

The famous Nash-Williams conjecture~\cite{NashWilliams1970} says that $\delta_{\smash{C_{3}}}(n) \leq 3n/4$.
This is still open, with the current best upper bound given by~$\delta_{\smash{C_{3}}}^\ast \leq d \approx 0.827$ due to Delcourt and Postle~\cite{MR4163104}.
Very recently, Joos and Kühn~\cite{KuhnJoos2021} proved that $\delta_{\smash{C_{\ell}}}^\ast$ tends to $1/2$ whenever $\ell$ goes to infinity.
Together with the best known lower bounds~\cites{BKLO2016,BGKLMO2020}, we now know that for all odd $\ell \ge 3$,
\[ \frac{1}{2} + \frac{1}{2(\ell - 1)} \leq \delta_{\smash{C_{\ell}}} \leq \delta_{\smash{C_{\ell}}}^\ast \leq \frac{1}{2} + O\left( \frac{\log \ell}{\ell} \right).\]
On the other hand, cycles of even length are bipartite, and Glock, Kühn, Lo, Montgomery, and Osthus~\cite{GKLMO2019} were able to characterise the `decomposition thresholds' for all bipartite graphs.
In particular, $\delta_{\smash{C_{4}}} = 2/3$ and $\delta_{\smash{C_{\ell}}} = 1/2$ for all even $\ell \ge 6$.
Remarkably, Taylor \cite{Taylor2019} showed exact results for large $n$, by proving $\delta_{\smash{C_{4}}}(n) = 2n/3 - 1$ and $\delta_{\smash{C_{\ell}}}(n) = n/2$ for all even $\ell \geq 8$.

To summarise, for large $\ell$ the values of $\delta_{\smash{C^2_{\ell}}}$ have a strong dependence on the parity of $\ell$, being $\delta_{\smash{C^2_{\ell}}} > 1/2$ if $\ell$ is odd, and $\delta_{\smash{C^2_{\ell}}} = 1/2$ otherwise.
In contrast, Theorem~\ref{theorem:ldecomposcodegree} implies that for $k = 3$ and large $\ell$ the behaviour is different: $\delta_{\smash{C^3_{\ell}}} = 2/3$ for all $\ell$ sufficiently large, regardless of whether the cycle is tripartite or not.

The following simple corollary can be deduced from our main theorem.
Say a $k$-graph has a \emph{cycle decomposition} if it admits a decomposition into cycles.
That is, there are edge-disjoint cycles ---not necessarily of the same length--- which cover every edge exactly once.
This notion is weaker than that of having a $C_{\ell}$-decomposition for a fixed $\ell$.
It is easy to see that any $3$-graph having a cycle decomposition must be $3$-vertex-divisible.
As a corollary of Theorem~\ref{theorem:ldecomposcodegree}, we obtain an upper bound on the minimum codegree sufficient to force a cycle decomposition.

\begin{cor} \label{theorem:cycledecomposcodegree}
    Any $3$-vertex-divisible $3$-graph $H$ with $\delta_2(H) \geq (2/3 + o(1))|H|$ has a cycle decomposition.
\end{cor}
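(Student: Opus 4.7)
The plan is to deduce the corollary from Theorem~\ref{theorem:ldecomposcodegree} applied with $\ell=9$, which yields $\delta_{\smash{C^3_9}}=2/3$: any $3$-graph that is $C_9$-divisible and has codegree $\ge(2/3+o(1))n$ admits a $C_9$-decomposition, which is in particular a cycle decomposition. The only obstruction is that the $3$-vertex-divisibility of $H$ does not by itself guarantee $9 \mid |E(H)|$, and my remedy is to delete a single short tight cycle of appropriate length before invoking Theorem~\ref{theorem:ldecomposcodegree}.

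More precisely, I set $r:=|E(H)|\bmod 9$. If $r=0$, I apply Theorem~\ref{theorem:ldecomposcodegree} to $H$ directly. Otherwise, I choose $m\in\{4,\dots,12\}$ with $m\equiv r\pmod 9$, locate a tight cycle $C\subseteq H$ of length $m$ (discussed below), and set $H':=H\setminus E(C)$. Every vertex of a tight $3$-cycle has degree exactly $3$, so removing $E(C)$ preserves $3$-vertex-divisibility; by the choice of $m$ one has $|E(H')|\equiv 0\pmod 9$, so $H'$ is $C_9$-divisible. Since only a constant number of edges were removed, any codegree drops by at most $2$, so $\delta_2(H')\ge(2/3+o(1))|H'|$ still holds. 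Theorem~\ref{theorem:ldecomposcodegree} then produces a $C_9$-decomposition $\mathcal{D}$ of $H'$, and $\mathcal{D}\cup\{C\}$ is the desired cycle decomposition of $H$.

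It remains to locate a tight cycle of each length $m\in\{4,\dots,12\}$ in $H$. I pick $v_1,v_2$ arbitrarily and greedily select $v_3,\dots,v_{m-1}$ with $v_i\in N_H(v_{i-2},v_{i-1})\setminus\{v_1,\dots,v_{i-1}\}$, which is feasible because $|N_H(v_{i-2},v_{i-1})|\ge(2/3+o(1))n$. To close the cycle, the last vertex $v_m$ must lie in $N_H(v_{m-2},v_{m-1})\cap N_H(v_{m-1},v_1)\cap N_H(v_1,v_2)$; writing $\delta_2(H)\ge(2/3+\varepsilon)n$ with some fixed $\varepsilon>0$ (which is the content of the $o(1)$ hypothesis for sufficiently large $n$), inclusion--exclusion bounds this intersection from below by $3\varepsilon n-O(1)>0$, leaving ample room to avoid previously chosen vertices. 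Since the corollary is essentially a bookkeeping reduction to Theorem~\ref{theorem:ldecomposcodegree} together with an elementary embedding of a constant-size subgraph, I do not foresee any serious obstacle in carrying it out.
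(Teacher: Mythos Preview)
The proposal is correct and follows essentially the same approach as the paper: remove a single short tight cycle of appropriate length so that the remainder is $C_9$-divisible, then apply Theorem~\ref{theorem:ldecomposcodegree} with $\ell=9$. The only cosmetic difference is that the paper always removes a cycle of length $9+r\in\{9,\dots,17\}$ rather than handling the case $r=0$ separately and using lengths in $\{4,\dots,12\}$.
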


\subsection{Euler tours}

Our focus in decompositions into cycles is partly motivated by its close connections with the celebrated problem of finding \textit{Euler tours}.
Given a $k$-graph $H$, a \emph{tour} is a sequence of non-necessarily distinct vertices $v_1, \dots, v_{\ell}$ such that, for every $1 \leq i \leq \ell$ the $k$ consecutive vertices~$\{v_i,v_{i+1},\dots, v_{i+k-1}\}$ induce an edge (understanding the indices modulo $\ell$), and moreover all of these edges are distinct.
If a hypergraph~$H$ contains a tour that covers each edge exactly once, we call it~\emph{Euler tour} and we say that~$H$ is \emph{Eulerian}.

Famously, Euler~\cite{Euler1741} proved that every Eulerian graph must be~$2$-vertex-divisible, 
and stated (later proved by Hierholzer and Wiener~\cite{HierholzerWiener1873}) that connected and $2$-vertex-divisible graphs are Eulerian.
Analogously, for $k \geq 3$, it is an easy observation that every Eulerian $k$-graph must be $k$-vertex-divisible.
However, the characterisation of Eulerian $k$-graphs is not as simple as for $k = 2$.
In fact, until recently, it was not even known if complete $k$-vertex-divisible $k$-graphs were Eulerian.
It was conjectured by Chung, Diaconis, and Graham~\cite{ChungDiaconisGraham1992} that indeed that should be the case, at least for sufficiently large complete $k$-graphs.
This was proven to be true by Glock, Joos, Kühn, and Osthus~\cite{GJKO2020}, which deduced this from a more general result which finds Euler tours in $k$-graphs with certain quasirandom conditions (which are satisfied by complete graphs).

From this more general result, they also deduced a `minimum codegree' version of their theorem: there exists~$c>0$ such that any sufficiently large~$3$-vertex-divisible hypergraph~$H$ with~$\delta_2(H)\geq (1-c)\vert H\vert$ is Eulerian.
The constant~$c$ which they obtained is fairly small (by inspecting their proof, we estimate $\log_2 (c)\leq {-10^{12}}$) and therefore improving the minimum codegree condition becomes a natural problem. 
Their proof is based fundamentally on a reduction to the problem of finding a cycle decomposition.
In the same fashion, we can use Theorem~\ref{theorem:ldecomposcodegree} to improve the minimum codegree condition.

\begin{cor} \label{corollary:codegreeuler}
    Any $3$-vertex-divisible $3$-graph $H$ with $\delta_2(H) \geq (2/3 + o(1))|H|$ is Eulerian.
\end{cor}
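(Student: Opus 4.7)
The plan is to follow the reduction in~\cite{GJKO2020}, which deduces the existence of Euler tours from cycle decompositions, and to apply Corollary~\ref{theorem:cycledecomposcodegree} as the cycle-decomposition input. The global strategy is absorption.

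First, I would reserve an \emph{absorbing tour} $A \subseteq H$: a tight tour whose edge set is small (with $\Delta_2(A) = o(n)$) and which has the property that for every short tight cycle $C$ edge-disjoint from $A$ there is a tight tour $A'$ with $E(A') = E(A) \cup E(C)$. Because a tight tour necessarily has $3 \mid \deg(v)$ at every vertex (each appearance of $v$ in the tour contributes exactly three distinct edges containing $v$), the subgraph $H' := H - E(A)$ is still $3$-vertex-divisible, and since $\Delta_2(A) = o(n)$ it still satisfies $\delta_2(H') \geq (2/3 + o(1))|H|$. Applying Corollary~\ref{theorem:cycledecomposcodegree} to $H'$ then yields a decomposition of $E(H')$ into tight cycles $C_1, \dots, C_r$. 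I would then absorb these cycles into $A$ one at a time. If needed, each $C_i$ is first chopped into constant-size tight pieces whose endpoints match an interface of the absorber, and those pieces are absorbed individually; this reduction is what makes it enough to build an absorber for short cycles only. After all pieces have been absorbed, the current tour uses every edge of $H$ exactly once and is the desired Euler tour.

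The main obstacle is constructing $A$ with the required universal absorbing power. This is the usual heavy lifting of an absorption argument, but I expect it to go through with only cosmetic modifications compared to~\cite{GJKO2020}, because our codegree hypothesis $\delta_2(H) \geq (2/3 + o(1))n$ is enormously stronger than the hypothesis $\delta_2(H) \geq (1 - c)n$ used there (with $c$ extremely small). Concretely, the high codegree guarantees that every pair $uv$ extends to many tight paths in $H$, so a random greedy process can embed a dense, well-spread network of local absorbing gadgets into $A$, each designed so that any bounded-length tight cycle matching one of its interfaces can be rerouted into the tour. Verifying that the parameters in that random construction close up under our weaker-looking but still comfortable codegree condition is the main technical point to check.
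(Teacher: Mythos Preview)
Your overall strategy matches the paper's: set aside a small tour, apply Corollary~\ref{theorem:cycledecomposcodegree} to the remainder, then absorb the resulting cycles into the tour. However, you significantly overcomplicate the absorbing step, and in doing so leave a gap.

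The paper does not build an absorber out of local gadgets. It simply takes a \emph{spanning} tour $W'$, meaning one in which every ordered pair $(v_1,v_2)$ of distinct vertices appears consecutively; the existence of such a tour with $\Delta_2(W') \le \log^3 n$ is quoted directly as \cite[Lemma~5]{GJKO2020}, using only that $\delta_2(H) \ge (2/3+\eps)n$ makes $H$ $\eps$-connected. Once $W'$ is spanning, absorbing a cycle $C = v_1v_2\dotsm v_m$ is trivial: write $W' = W'_1 v_1 v_2 W'_2$ and replace it by $W'_1 v_1 v_2 \dotsm v_m v_1 v_2 W'_2$. Crucially, the new tour is still spanning, so the process can be iterated over all cycles of the decomposition with no bookkeeping, no chopping into pieces, and no concern about using up gadgets.

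Your proposal, by contrast, speaks of ``a dense, well-spread network of local absorbing gadgets'' designed for short cycles. Besides being unnecessary, this is where the gap lies: the cycle decomposition of $H'$ produces $\Theta(n^3)$ cycles, while your tour $A$ has $o(n^3)$ edges and hence only $o(n^3)$ gadgets. You never explain why the gadgets are reusable, and in standard absorption they are not. The spanning-tour approach sidesteps this entirely because a single occurrence of $(v_1,v_2)$ in $W'$ can absorb arbitrarily many cycles beginning with $v_1 v_2$.
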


\subsection{Lower bounds and counterexamples} \label{subsection:counterexample}

Theorem~\ref{theorem:ldecomposcodegree}, Corollary~\ref{theorem:cycledecomposcodegree} and Corollary~\ref{corollary:codegreeuler} hold for $3$-graphs $H$ satisfying $\delta_2(H) \geq (2/3 + o(1))|H|$.
Glock, Kühn, and Osthus~\cite[Conjecture 5.6]{GlockKuhnOsthus2020} conjectured that Corollary~\ref{theorem:cycledecomposcodegree} should hold already for any $H$ with~$\delta_2(H)\ge (1/2+o(1))\vert H \vert$.
Similarly, in the setting of Corollary~\ref{corollary:codegreeuler}, Glock, Joos, Kühn, and Osthus~\cite[Conjecture 3]{GJKO2020} conjectured (reiterated in~\cite[Conjecture 5.4]{GlockKuhnOsthus2020}) that a minimum codegree of~$(1/2+o(1))|H|$ should be enough to guarantee the existence of Euler tours.

However, it turns out that the `$2/3$' in our statements cannot be lowered.
We prove this by constructing a family of counterexamples which are able to cover all of the previous settings ($C_{\ell}$-decompositions, cycle decompositions, and Euler tours) in a unified way.

A \emph{tour decomposition} of $H$ is a collection of edge-disjoint tours in $H$ which, together, cover all edges of $H$.
Note that a cycle is precisely a tour which does not repeat vertices.
Thus we have that both $C_{\ell}$-decompositions and cycle decompositions are particular instances of tour decompositions,
and moreover Eulerian graphs are graphs which admit a tour decomposition consisting of a single tour.
Thus the following result shows that Theorem~\ref{theorem:ldecomposcodegree}, Corollary~\ref{theorem:cycledecomposcodegree}, and Corollary~\ref{corollary:codegreeuler} are asymptotically tight for the minimum codegree condition.

\begin{theorem} \label{theorem:counterexample}
    Let $\ell \ge 4$ and $n \ge 3(\ell+3)$ be divisible by $18$.
    Then there exists a $C_{\ell}$-divisible $3$-graph $H$ on $n$ vertices
    which satisfies $\delta_2(H) \geq (2n - 15)/3$,
    but does not admit a tour decomposition.
\end{theorem}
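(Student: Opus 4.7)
I plan to construct $H$ from an imbalanced tripartite partition $V = V_1 \cup V_2 \cup V_3$ with sizes $|V_1| = n/3 + 3$, $|V_2| = n/3$, $|V_3| = n/3 - 3$; the divisibility $18 \mid n$ makes this possible, and $n \geq 3(\ell+3)$ ensures $|V_3| \geq \ell$. The initial hypergraph $H_0$ will consist of all triples of $V$ with part distribution of the form $(2, 1, 0)$ in some order; equivalently, $H_0$ is the complete $3$-graph minus the monochromatic and tripartite triples. A bounded local modification will adjust $|E(H_0)|$ modulo $\ell$ while preserving $3$-vertex-divisibility, yielding $H$.

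Verifying codegree and divisibility will be direct computations. A same-part pair in $V_i$ has codegree $n - |V_i|$ in $H_0$, while a cross pair in $V_i \cup V_j$ has codegree $n - 2 - |V_k|$. The minimum will be attained for pairs with one vertex in $V_2$ and one in $V_3$, giving $n - 2 - |V_1| = (2n - 15)/3$. The small adjustments made to ensure $\ell \mid |E(H)|$ will shift codegrees by at most a constant and will be absorbed by the bound. For $v \in V_i$, the identity $\deg_{H_0}(v) = \binom{n-1}{2} - \binom{|V_i|-1}{2} - |V_j||V_k|$, evaluated modulo $3$ with the chosen sizes, vanishes; hence $H_0$ will be $3$-vertex-divisible, and the modification will preserve this property.

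The heart of the argument will be showing that $H$ admits no tour decomposition, via a parity argument on auxiliary pivot assignments. A tour decomposition assigns to each hyperedge $e = \{x, y, z\}$ a pivot $p(e) \in e$---the middle vertex of $e$ relative to its tour. The pivots induce a multigraph on pairs of $V$, connecting, for each $e$, the two pairs of $e$ containing $p(e)$; this multigraph must be Eulerian. The resulting parity constraint, for each pair $q \subseteq V$, will be
\[
\bigl| \{ e \in E(H) : q \subseteq e,\ p(e) \notin q \} \bigr| \equiv \deg_H(q) \pmod{2}.
\]
The imbalance $|V_1| - |V_3| = 6$ will cause the parities $\deg_H(q) \bmod 2$ to follow a specific pattern at the six types of pairs. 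Combined with the integrality constraint---exactly one pivot per hyperedge---these parities will admit no simultaneous solution. The contradiction will follow from a counting argument tracking how each of the six $(2,1,0)$-edge types contributes to the pivot counts at each pair type.

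The hard part will be making this last step fully rigorous: proving that no choice of pivots can satisfy all parity constraints. The difficulty lies in the integrality restriction, since $\mathbb{F}_2$-versions of the constraints remain solvable. The asymmetric construction is tailored so that the integrality requirement, together with the prescribed parities, forces a conflict that rules out any pivot assignment and hence any tour decomposition.
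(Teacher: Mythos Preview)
Your construction is close in spirit to the paper's: the base hypergraph $H_0$ (all triples except monochromatic and rainbow ones, relative to a three-part partition) is the same, and your verification of codegrees and $3$-vertex-divisibility is correct. In fact, your choice of part sizes $n/3\pm 3$ makes $H_0$ already $3$-vertex-divisible, so you avoid the matching removal the paper needs. However, the proposal has a genuine gap at the decisive step, and the gap is not just a missing detail---it concerns whether your chosen obstruction exists at all.

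The paper's argument is a \emph{mod-$3$} invariant for tours: whenever the rainbow edges are absent, every tour $W$ satisfies $|W[U_1,U_1,U_2]|\equiv |W[U_1,U_2,U_2]|\pmod 3$. Summed over a tour decomposition this forces $|H_{112}|\equiv|H_{122}|\pmod 3$, and the paper picks part sizes $n_i\equiv i\pmod 3$ precisely so that this congruence fails. With your sizes $n/3+3,\,n/3,\,n/3-3$ (all divisible by $3$), every count $\binom{n_i}{2}n_j$ is divisible by $3$, so the paper's invariant yields no contradiction for your $H_0$. In other words, the known obstruction to tour decompositions is absent in your construction, and you do not supply a substitute.

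What you offer instead is a mod-$2$ pivot framework: a tour decomposition yields a pivot $p(e)\in e$ for each edge, and the induced pair-multigraph must be Eulerian, giving parity constraints $|\{e\supseteq q:p(e)\in q\}|\equiv 0\pmod 2$ for every pair $q$. This necessary condition is correctly derived. But you then need to show no pivot assignment satisfies all these parities, and here the proposal says only that ``the hard part will be making this last step fully rigorous'' and that the $\mathbb{F}_2$ relaxation \emph{is} solvable, with a vague appeal to ``integrality''. Concretely: over $\mathbb{F}_2$ the constraint $\sum_{v\in e}X_e^v=1$ admits the solution $X_e^v=1$ for all $v\in e$, and you give no mechanism to rule out lifting an $\mathbb{F}_2$ solution to an integral one. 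Without such a mechanism there is no argument, and I see no reason to expect one to exist; it is entirely plausible that your $H_0$ admits a valid pivot assignment (and perhaps even a tour decomposition). The fix is not to patch the parity argument but to change the part sizes so that the residues modulo $3$ differ, and then run the paper's mod-$3$ invariant.
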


\subsection{Organisation of the paper}
In Section~\ref{section:counterexample} we prove the lower bound of Theorem~\ref{theorem:counterexample}.
In Section~\ref{section:corollaries} we give short proofs of Corollaries~\ref{theorem:cycledecomposcodegree} and~\ref{corollary:codegreeuler} assuming Theorem~\ref{theorem:ldecomposcodegree}.

In Section~\ref{Section:IterativeAbsorption} we show Theorem~\ref{theorem:ldecomposcodegree} by using the technique of \emph{iterative absorption}, which we review there. The technique relies on three main lemmata, the Vortex Lemma, Cover-Down Lemma and Absorbing Lemma.
After some useful tools (Section~\ref{section:tools}), these three lemmata are proved in Sections~\ref{section:vortex}, \ref{section:cover} and~\ref{section:absorbing}, respectively.
We finish in Section~\ref{section:final} with some remarks and questions.

\subsection{Notation} \label{subsection:notation}

Since isolated vertices make no difference in our context, we usually do not distinguish from a hypergraph~$H=(V,E)$ and its set of edges~$E$.
We will suppress brackets and commas to refer to pairs and triples of vertices when they are considered as edges of a hypergraph. 
For instance, for $x, y, z \in V(H)$, $xyz \in H$ means that the edge $\{x, y, z\}$ is in $E(H)$.
For a vertex~$x\in V(H)$, the \emph{link graph of~$x$} is the~$2$-graph $H(x)$ with edge set~$\{yz\in \binom{V}{2}\colon xyz\in E(H)\}$.
Moreover, given a set of vertices~$U\subseteq V$ we denote the~\emph{restricted link graph} by~$H(v,U) = H(v)\cap \binom{U}{2}$. 
The~\emph{degrees}~$\deg_H(x)$ and~$\deg_H(x,U)$ correspond to~$\vert H(x)\vert$ and~$\vert H(x,U)\vert$ respectively. 
For a pair of vertices $xy$ in $V(H)$, the \emph{neighbourhood of~$xy$} $N_H(xy)$ is the set of vertices $z \in V(H)$ such that $xyz \in H$, given $U \subseteq V(H)$ then $N_H(xy, U) = N_H(xy) \cap U$.
The \emph{codegrees} $\deg_H(xy)$ and $\deg(xy, U)$ correspond to $|N_H(xy)|$ and $|N_H(xy, U)|$ respectively.
We suppress $H$ from the degrees, codegrees, and neighbourhoods if it can be deduced from context.
The \emph{shadow} $\partial H$ of a $3$-graph $H$ is $\{ uv \in \binom{V(H)}{2} : \deg(uv) > 0 \}$.
If $\mathcal{C} = \{ C_1, \dotsc, C_{r} \}$ is a collection of subgraphs of $H$,
sometimes we will let $E(\cC)$ be the hypergraph whose edges are $\bigcup_{1 \leq i \leq r} E(C_i)$.

We will use hierarchies in our statements.
The phrase ``$a \ll b$'' means ``for every $b > 0$, there exists $a_0 > 0$, such that for all $0 < a \leq a_0$ the following statements hold''.
We implicitly assume all constants in such hierarchies are positive, and if $1/a$ appears we assume $a$ is an integer.

A \emph{walk} in a $3$-graph $H$ is a sequence $W = (v_1, \dotsc, v_{\ell})$ of vertices of $H$ such that every $3$ consecutive vertices form an edge of $H$.
A \emph{trail} is a walk in which no edge appears more than once, and a \emph{path} is a trail in which no vertex appears more than once.
A \emph{closed walk} is a walk in which every cyclic shift is still a walk of $H$ (thus tours are trails which are closed walks).
Given a walk $W = (v_1, v_2, \dotsc, v_{\ell})$, we define its \emph{start} ${s}(W)$ and
\emph{terminus} $t(P)$ as $\{v_1, v_2\}$ and $\{v_{\ell - 1}, v_{\ell}\}$ respectively, and we say $W$ \emph{goes from} $(v_1,v_2)$ to $(v_{\ell-1}, v_\ell)$ and also that $W$ is a \emph{$(v_1, v_2, v_{\ell - 1}, v_{\ell})$-path}.
We will use the simpler notation $W = v_1 v_2 \dotsb v_{\ell}$ for walks, and, when useful, we will identify such walks with subgraphs of $H$ (so we can say e.g. $e \in E(W)$).

\section{Lower bounds}\label{section:counterexample}

In this section we prove Theorem~\ref{theorem:counterexample}.
The following lemma captures divisibility constraints that tours in $3$-graphs must satisfy, and it will be the basis of our constructions.
For a $3$-graph $H$, a subgraph $W \subseteq H$ and vertex sets $X,Y,Z$ in $V(H)$, let $W[X,Y,Z]$ be the set of edges $xyz$ in $E(W)$ such that~$x\in X$,~$y\in Y$, and~$z\in Z$.

\begin{lemma} \label{lemma:crucial}
    Let $H$ be a $3$-graph with a vertex partition $\{U_0, U_1, U_2 \}$, and $H[U_0, U_1, U_2] = \emptyset$.
    Let $W$ be a tour in $H$.
    Then $|W[U_1, U_1, U_2]| \equiv |W[U_1, U_2, U_2]| \bmod 3$.
\end{lemma}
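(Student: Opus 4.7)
My plan is to prove the lemma by exhibiting a \emph{potential function} on ordered pairs of labels that makes the signed count $|W[U_1,U_1,U_2]|-|W[U_1,U_2,U_2]|$ telescope around the cyclic tour.

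Concretely, I parametrise the tour as a cyclic sequence of vertices $v_1,v_2,\dotsc,v_\ell$ (so the edges of $W$ are the sets $\{v_i,v_{i+1},v_{i+2}\}$, indices modulo~$\ell$) and assign each vertex the label $a_i\in\{0,1,2\}$ determined by $v_i\in U_{a_i}$. Set $f(a,b,c)=1$ if the multiset $\{a,b,c\}$ is $\{1,1,2\}$, $f(a,b,c)=-1$ if it is $\{1,2,2\}$, and $f(a,b,c)=0$ otherwise, so that
\[
|W[U_1,U_1,U_2]|-|W[U_1,U_2,U_2]|=\sum_{i=1}^{\ell}f(a_i,a_{i+1},a_{i+2}).
\]
I then introduce $g\colon\{0,1,2\}^2\to\{0,1,2\}$ defined by $g(1,2)=1$, $g(2,1)=2$, and $g(x,y)=0$ on every other pair, and verify the telescoping identity $g(b,c)-g(a,b)\equiv f(a,b,c)\pmod{3}$ for every triple $(a,b,c)$ whose multiset is \emph{not} $\{0,1,2\}$. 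This reduces to a short finite case check, and the hypothesis $H[U_0,U_1,U_2]=\emptyset$ is used precisely here: the forbidden triples (the permutations of $(0,1,2)$) are exactly the cases where the identity fails, and the hypothesis guarantees that no edge of $W$ realises such a triple.

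Once the telescoping identity is in place, summing it over $i$ in the cyclic order causes the $g$-terms to cancel in pairs, yielding $\sum_i f(a_i,a_{i+1},a_{i+2})\equiv 0\pmod{3}$, which is the claimed congruence. The only non-mechanical step is locating the potential $g$: writing the telescoping constraints as a linear system over $\ZZ/3\ZZ$ on the non-forbidden triples produces a system whose solution space turns out to be one-dimensional and generated by the $g$ above. After that, everything reduces to routine verification, and no serious obstacle remains.
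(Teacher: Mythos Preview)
Your proof is correct and takes a genuinely different route from the paper's. The paper also passes to the cyclic label sequence $(a_1,\dotsc,a_\ell)$ and effectively studies the same signed count, but instead of a potential function it decomposes the set of indices $i$ with $\{a_i,a_{i+1},a_{i+2}\}\in\{\{1,1,2\},\{1,2,2\}\}$ into maximal cyclic intervals and shows each interval contributes $0\bmod 3$; the hypothesis $H[U_0,U_1,U_2]=\emptyset$ enters there to force each such interval to begin and end with a repeated symbol. Your telescoping argument via $g$ is cleaner and shorter, collapsing the whole interval analysis into a finite table check; in fact your potential has the tidy closed form $g(a,b)\equiv b-a\pmod 3$ when $\{a,b\}\subseteq\{1,2\}$ and $g(a,b)=0$ otherwise. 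One small imprecision worth correcting: you say the permutations of $(0,1,2)$ are ``exactly'' the triples where the identity $g(b,c)-g(a,b)\equiv f(a,b,c)$ fails, but actually it also holds for $(1,0,2)$ and $(2,0,1)$. This does not affect the argument, since you only use the forward implication (non-forbidden $\Rightarrow$ identity holds), but the word ``exactly'' should be dropped.
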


\begin{proof}
	Let $W = w_1 w_2 \dotsb w_{r}$, in cyclic order,
	and let $P = \sigma_1 \dotsb \sigma_{r}$ be a cyclic word over the symbols $\{0,1,2\}$, where $\sigma_i = j$ if and only if $w_i \in U_j$.
	Since $W$ is a tour, it does not repeat edges.
	Thus we have that $|W[U_1, U_1, U_2]|$ is exactly the same as the number of appearances of the patterns $F_1 = \{ 112, 121, 211 \}$ formed by three consecutive symbols in $P$.
	Similarly, $|W[U_1, U_2, U_2]|$ is exactly counted by the number of appearances of $F_2 = \{ 122, 212, 221 \}$ consecutively in $P$.
	In both cases we count the cyclic appearances of the patterns, i.e. we also consider the patterns formed by $\sigma_{r-1} \sigma_{r} \sigma_1$ and $\sigma_{r} \sigma_{1} \sigma_2$.
	
	Define $\Phi(P)$ as follows.
	Scan the triples of consecutive symbols of $P$ one by one, and if they belong to $F_1 \cup F_2$, we add the sum of the values of their symbols to $\Phi(P)$.
	More formally, let $I \subseteq [r]$ be such that $i \in I$ if and only if $\sigma_i \sigma_{i+1} \sigma_{i+2} \in F_1 \cup F_2$ (where the indices are always understood modulo $r$, i.e. $\sigma_{r+1} = \sigma_1$ and $\sigma_{r+2} = \sigma_2$),
	and then 
	\[ \Phi(P) = \sum_{i \in I} ( \sigma_i + \sigma_{i+1} + \sigma_{i+2} ).\]
	
	We aim to show that $\Phi(P) \equiv 0 \bmod 3$.
	If $I = \emptyset$, this is obvious, and if $I = [r]$ then $\Phi(P)$ sums every symbol of $P$ three times, and thus also $\Phi(P) \equiv 0$.
	Thus we can assume $I \notin \{ \emptyset, [r] \}$.
	We write $I$ as a disjoint union of intervals of consecutive indices, minimising the number of intervals.
	Thus, without loss of generality (after shifting $W$ and $P$ cyclically) we can assume $I = I_1 \cup \dotsb \cup I_k$, so each $I_j$ is of the form $\{a_j, a_j + 1 ,\dotsc, b_j \}$ for some $a_j \leq b_j$ and further we have $a_1 = 1$, $b_j \leq a_{j+1} - 2$ for all $1 \leq j < k$ and $b_k \leq r-1$.
	Setting $\Phi_j = \sum_{i \in I_j} (\sigma_i + \sigma_{i+1} + \sigma_{i+2})$ we have $\Phi(P) = \sum_{1 \leq j \leq k} \Phi_j$,
	so it is enough to show that $\Phi_j \equiv 0 \bmod 3$ for each $j$.
	
	Let $1 \leq j \leq k$ be arbitrary, for brevity write $a = a_j$ and $b = b_j$.
	Let $P_j = \sigma_a \sigma_{a+1} \dotsm \sigma_{b+1} \sigma_{b+2}$.
	We claim that $P_j$ begins with two repeated symbols.
	Since $I_k \subseteq I$, we have $\sigma_{a} \sigma_{a + 1} \sigma_{a + 2} \in F_1 \cup F_2$, thus in particular $\sigma_{a}$ and $ \sigma_{a + 1}$ must be in $\{1,2\}$.
	If $\sigma_a \neq \sigma_{a+1}$, then we would have $\sigma_{a} \sigma_{a + 1} = 12$ or $\sigma_{a} \sigma_{a + 1} = 21$.
	In any case, it cannot happen that $\sigma_{a - 1} \in \{1,2\}$, since then that would imply that $a - 1 \in I$, contradicting the choice of $I_k$.
	Thus $\sigma_{a - 1} = 0$, and therefore $\sigma_{a - 1} \sigma_{a} \sigma_{a+1} = 012$ or $\sigma_{a - 1} \sigma_{a} \sigma_{a+1} = 021$.
	But this implies that $W$ contains an edge in $H[U_0, U_1, U_2]$, a contradiction.
	Thus $P_j$ begins with two repeated symbols, and an analogous argument implies that $P_j$ also ends with two repeated symbols.
	
	If $a = b$, then we would have $\sigma_{a} \sigma_{a+1} \sigma_{a+2} = 111$ or $\sigma_{a} \sigma_{a+1} \sigma_{a+2} = 222$, then implying $a \notin I$, a contradiction.
	Thus $a < b$, and therefore $P_j$ must have the form $P_j = xxQ_jyy$, where $x, y \in \{1,2\}$ and $Q_j$ is a (possibly empty) word.
	Thus we have
	\[ \Phi_j = \sum_{a \leq i \leq b} (\sigma_i + \sigma_{i+1} + \sigma_{i+2}) = x + 2x + 3 \left( \sum_{a+2 \leq i \leq b} \sigma_i \right) + 2y + y \equiv 0 \bmod 3, \] 
	and this implies $\Phi(P) \equiv 0 \bmod 3$, as discussed before.
	
	Finally, note that, for $j \in \{1,2\}$, if $\sigma_i \sigma_{i+1} \sigma_{i+2} \in F_j$, then $\sigma_i + \sigma_{i+1} + \sigma_{i+2} \equiv j \bmod 3$.
	Thus $\Phi(P) \equiv |W[U_1, U_1, U_2]| + 2 |W[U_1, U_2, U_2]| \bmod 3$.
	But since $\Phi(P) \equiv 0 \bmod 3$ and $2 \equiv -1 \bmod 3$, we deduce $|W[U_1, U_1, U_2]| \equiv |W[U_1, U_2, U_2]| \bmod 3$, as desired.
\end{proof}

To prove Theorem~\ref{theorem:counterexample},
we will consider alterations of the following $3$-graph.

\begin{definition} \label{definition:H}
	Let $n$ be divisible by $18$ and write $n = 18k$.
	Consider the $3$-graph $H_{n}$ on $n$ vertices, whose vertex set is partitioned into three clusters $V_0, V_1, V_2$ whose sizes are $n_0, n_1, n_2$ respectively, and are defined by
	\begin{align}
		n_0 = 6k, \qquad
		n_1 = 6k-2, \qquad
		\text{and} \qquad
		n_2 = 6k+2. \label{equation:clustersizes}
	\end{align}
	Given a vertex $x \in V(H_{n})$, the \emph{label} $l(x)$ of $x$ is $i$ if and only if $x \in V_i$.
	The edge set of $H_{n}$ is 
	$$E(H_{n})=\{ xyz : l(x)+l(y)+l(z) \not\equiv 0 \bmod 3 \}.$$
\end{definition}

In words, every $3$-set is present as an edge in $H_{n}$, except for those which are entirely contained in one of the clusters $V_i$, or have non-empty intersection with all three clusters.
Usually $n$ will always be clear from context, and for a cleaner notation we will just write $H = H_n$ in the remainder of this section.

We begin our analysis by noting the $3$-graph $H$ has large minimum codegree.

\begin{lemma} \label{lemma:Hcodegree}
	Let $n \in 18 \mathbb{N}$. 
	Then $\delta_2(H) \geq (2n-12)/3$.
\end{lemma}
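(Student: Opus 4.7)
The plan is to compute $\deg_H(xy)$ directly by case analysis on the labels $l(x), l(y)$ of the two vertices, and then identify which case achieves the minimum. Since the edge condition in Definition~\ref{definition:H} depends only on $l(x)+l(y)+l(z) \bmod 3$, a vertex $z \neq x,y$ satisfies $xyz \in E(H)$ if and only if $l(z) \not\equiv -l(x)-l(y) \pmod 3$. Thus $N_H(xy)$ consists of all vertices in the two clusters $V_i$ with $i \not\equiv -l(x)-l(y) \pmod 3$, minus the at most two excluded vertices $x,y$ themselves.

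I would organize the computation by the value of $s := l(x)+l(y) \bmod 3$. If $s \equiv 0$, the forbidden cluster for $z$ is $V_0$, so $z$ ranges over $V_1 \cup V_2$; if $s \equiv 1$, it ranges over $V_0 \cup V_1$; and if $s \equiv 2$, it ranges over $V_0 \cup V_2$. Depending on how the pair $\{x,y\}$ is distributed among the clusters, one subtracts $0$ or $2$ from the relevant sum $n_i + n_j$ (one subtracts $2$ exactly when both $x,y$ already lie in the two allowed clusters). Running through the six possible label-multisets $\{l(x),l(y)\}$ and using the sizes \eqref{equation:clustersizes} then yields:
\begin{align*}
    \{0,0\}:\ & n_1+n_2 = 12k, & \{1,1\}:\ & n_0+n_2 = 12k+2, & \{2,2\}:\ & n_0+n_1 = 12k-2,\\
    \{0,2\}:\ & n_0+n_2-2 = 12k, & \{1,2\}:\ & n_1+n_2-2 = 12k-2, & \{0,1\}:\ & n_0+n_1-2 = 12k-4.
\end{align*}

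The minimum is $12k-4$, achieved by pairs with one endpoint in $V_0$ and the other in $V_1$. Substituting $n = 18k$ gives $\delta_2(H) \geq (2n-12)/3$, as required. There is no real obstacle; the only thing to keep straight is whether $x$ and $y$ happen to sit in one of the two clusters that contribute to $N_H(xy)$, since that is what causes the ``$-2$'' correction and in particular forces the minimum to occur in the $\{0,1\}$-case rather than the $\{2,2\}$-case (even though $V_2$ is larger than $V_1$).
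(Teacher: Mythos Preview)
Your proof is correct and follows essentially the same approach as the paper: both identify that $N_H(xy)$ consists of the two clusters whose index is not $-l(x)-l(y)\bmod 3$, minus $\{x,y\}$ when applicable, and then minimise over the label pairs to find the minimum at $\{0,1\}$ with value $12k-4=(2n-12)/3$. The only difference is cosmetic---you spell out all six cases explicitly, whereas the paper absorbs them into the single formula $N(xy)=(V_i\cup V_j)\setminus\{x,y\}$ and a ``quick case analysis''.
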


\begin{proof}
	Let $x,y\in V(H)$, and set $p = l(x)+l(y)$.
	By the definition of $H$, a vertex $z$ will form an edge together with $xy$ whenever $p+l(z) \not\equiv 0 \bmod 3$.
	This is equivalent to $l(z) \equiv 1 - p \bmod 3$ or $l(z) \equiv 2 - p \bmod 3$.
	Thus, if $i, j \in \{0, 1, 2\}$ are such that $i \equiv 1 - p \bmod 3$ and $j \equiv 2 - p \bmod 3$, then $N(xy) = (V_i \cup V_j) \setminus \{x, y\}$.
	A quick case analysis reveals that $|N(xy)|$ is minimised whenever $x \in V_0$, $y \in V_1$, and in such a case $\deg_{H}(xy) = n_0+n_1-2 = 12k-4$.
	Thus $\delta_2(H) = 12k-4 = (2n-12)/3$, as required.
\end{proof}

We note that equations~\eqref{equation:clustersizes} imply that, for $n = 18k$, all $n_0, n_1, n_2$ are even, and for all $i \in \{0,1,2\}$ we have
\begin{align}
	n_i & \equiv i \pmod 3, \label{equation:clustermodulo}
\end{align}

Given $(i, j, k) \in \{0, 1, 2\}^3$, write $H_{ijk} = H[V_i, V_j, V_k]$.

\begin{lemma} \label{lemma:Hmod3}
	Let $n \in 18 \mathbb{N}$.
	Then
	\begin{enumerate}[{\upshape(M1)}]
		\item \label{item:degreeH} for every $x \in V(H)$, $\deg_H(x) \equiv 1 \bmod 3$ and
		\item \label{item:Hmod} $|H_{112}| \not\equiv |H_{122}| \bmod 3$.
	\end{enumerate}
\end{lemma}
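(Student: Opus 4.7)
The plan is a direct computation based on the congruences $n_i \equiv i \bmod 3$ recorded in~\eqref{equation:clustermodulo}, since the vertex partition is the only structure of $H$ relevant for divisibility.

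For~\ref{item:degreeH}, I would fix $x \in V_i$ and classify each edge at $x$ by the cluster-type of the pair $\{y,z\}$ completing it. Since $xyz \in H$ iff $l(y)+l(z) \not\equiv -i \bmod 3$, the allowed types are easy to list in each case, and the degree can be written as a sum of three or four terms, each of the form $n_a n_b$, $n_a(n_b-1)$, or $\binom{n_a}{2}$. Using~\eqref{equation:clustermodulo} and the fact that $n_1 - 1 \equiv 0$ and $n_0 \equiv 0 \bmod 3$, most of these terms vanish mod $3$. Concretely, for $x \in V_0$ the surviving terms are $(n_0-1)n_2$ and $\binom{n_2}{2}$, giving $2 \cdot 2 + 1 \equiv 1$; for $x \in V_1$ only $\binom{n_2}{2} \equiv 1$ survives; and for $x \in V_2$ only $n_1(n_2-1) \equiv 1 \cdot 1 \equiv 1$ survives. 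All three cases yield $\deg_H(x) \equiv 1 \bmod 3$.

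For~\ref{item:Hmod}, the counts are immediate:
\[ |H_{112}| = \binom{n_1}{2}n_2 \qquad \text{and} \qquad |H_{122}| = n_1 \binom{n_2}{2}. \]
Since $n_1 - 1 = 6k - 3 \equiv 0 \bmod 3$, we have $\binom{n_1}{2} \equiv 0$, hence $|H_{112}| \equiv 0 \bmod 3$. On the other hand, $\binom{n_2}{2} = (3k+1)(6k+1) \equiv 1 \bmod 3$, and $n_1 \equiv 1$, so $|H_{122}| \equiv 1 \bmod 3$. Thus $|H_{112}| \not\equiv |H_{122}| \bmod 3$.

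There is no real obstacle here; the only "design choice" is that the cluster sizes in Definition~\ref{definition:H} are engineered precisely so that $n_i \equiv i \bmod 3$ while keeping all $n_i$ even and close to $n/3$. Once that is noted, both parts reduce to checking a handful of mod-$3$ values, and the brief case analysis outlined above covers every situation.
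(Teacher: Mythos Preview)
Your approach is correct and essentially the same as the paper's: both compute the degree by listing the cluster-types of the completing pair and reduce modulo~$3$ using $n_i \equiv i$. Part~\ref{item:Hmod} and the cases $x \in V_1$, $x \in V_2$ are fine.

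There is, however, a computational slip in the $V_0$ case. For $x \in V_0$ the allowed pair-types are $(0,1)$, $(0,2)$, $(1,1)$, $(2,2)$, so the degree has a fourth term $(n_0-1)n_1$ which you dropped. This term is $\equiv 2 \cdot 1 \equiv 2 \bmod 3$, not $0$ (neither factor is divisible by $3$, so your stated vanishing criteria do not apply to it). Moreover, your displayed arithmetic ``$2 \cdot 2 + 1 \equiv 1$'' is itself wrong, since $5 \equiv 2 \bmod 3$. The two errors happen to cancel: the correct tally is $(n_0-1)n_1 + (n_0-1)n_2 + \binom{n_1}{2} + \binom{n_2}{2} \equiv 2 + 1 + 0 + 1 \equiv 1$. (The paper groups the first two terms as $(n_0-1)(n_1+n_2)$, which genuinely vanishes since $n_1+n_2 \equiv 0$; that is probably what you intended.) Once this is fixed, your argument is complete and matches the paper's.
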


\begin{proof}
	We begin by noting that $\binom{m}{2} \equiv 2m(m-1) \bmod 3$ holds for all integers $m$.
	Thus $\binom{m}{2} \equiv 1 \bmod 3$ if $m \equiv 2 \bmod 3$, and $\binom{m}{2} \equiv 0 \bmod 3$ otherwise.
	
	Now let $x \in V_0$.
	Then the pairs $yz$ such that $xyz \in H$ are those such that
	\begin{enumerate}
		\item $y \in V_0 \setminus \{ x\}$ and $z \in V_1 \cup V_2$, of which there are $(n_0 - 1)(n_1 + n_2)$ many,
		\item $yz \subseteq V_1$, of which there are $\binom{n_1}{2}$ many, and
		\item $yz \subseteq V_2$, of which there are $\binom{n_2}{2}$ many.
	\end{enumerate}
	Thus we have $\deg_H(x) = (n_0 - 1)(n_1 + n_2)+\binom{n_1}{2}+\binom{n_2}{2}$.
	Together with~\eqref{equation:clustermodulo}, we have that $\deg_H(x) \equiv 0+0+1 \equiv 1 \bmod 3$.
	Analogous calculations show that
	\begin{align*}
		\deg_H(y) 
		&\equiv 0 + 0 + 1
		\equiv 1 \bmod 3 \text{ for }y\in V_1 \text{ and }\\
		\deg_H(z)
		&\equiv 1 + 0 + 0 
		\equiv 1 \bmod 3 \text{ for }z\in V_2,
	\end{align*}
	thus~\ref{item:degreeH} holds.

	Finally, the sizes of $|H_{112}|$ and $|H_{122}|$ are $\binom{n_1}{2} n_2$ and $\binom{n_2}{2}n_1$ respectively, which then are easily seen to be equivalent to $0$ and $1$ modulo 3, respectively, which implies \ref{item:Hmod}.
\end{proof}

Since $H$ is not quite $3$-vertex-divisible,
our counterexample will consist actually of a slight alteration of $H$ obtained by removing some sparse subgraph,
which we define now.

\begin{lemma} \label{lemma:F}
	Let $n \in 18 \mathbb{N}$.
	Then there exists a perfect matching $F \subseteq H \setminus (H_{112} \cup H_{122})$.
\end{lemma}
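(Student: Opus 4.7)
The plan is to observe that, after removing $H_{112}$ and $H_{122}$, the remaining edges of $H$ are highly constrained in their intersection with $V_0$, which almost forces the structure of any perfect matching.

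First I would classify the edges of $H$ by the multiset of labels of their vertices. Since an edge of $H$ is precisely a triple whose labels sum to something not divisible by $3$, the possible label multisets are $\{0,0,1\},\{0,0,2\},\{0,1,1\},\{0,2,2\},\{1,1,2\},\{1,2,2\}$. Removing $H_{112}\cup H_{122}$ discards the last two, so every edge of $H\setminus (H_{112}\cup H_{122})$ contains at least one vertex of $V_0$. A perfect matching $F$ has exactly $n/3=6k=|V_0|$ edges; since each of those edges contains at least one $V_0$-vertex and no vertex is used twice, by counting each edge must contain \emph{exactly} one vertex of $V_0$. This rules out the types $\{0,0,1\}$ and $\{0,0,2\}$, so every edge of $F$ must be of type $\{0,1,1\}$ or $\{0,2,2\}$.

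Next I would check that the arithmetic works out. Writing $a,b$ for the number of edges of $F$ of types $\{0,1,1\}$ and $\{0,2,2\}$ respectively, the conditions $a+b=n_0=6k$, $2a=n_1=6k-2$, $2b=n_2=6k+2$ are consistent and force $a=3k-1$ and $b=3k+1$. Both $n_1$ and $n_2$ are even (as $n\in 18\mathbb{N}$), so there is no parity obstruction.

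Finally, the construction itself is immediate: arbitrarily partition $V_1$ into $3k-1$ pairs and $V_2$ into $3k+1$ pairs, and choose any bijection from the resulting $6k$ pairs onto $V_0$; for each pair $P$ matched to the vertex $v\in V_0$, place the triple $\{v\}\cup P$ into $F$. Every such triple is of type $\{0,1,1\}$ or $\{0,2,2\}$, so it belongs to $H\setminus(H_{112}\cup H_{122})$, and by construction the resulting $F$ is a perfect matching. There is no real obstacle here; the whole argument is a short counting observation followed by an arbitrary pairing, and the only thing one has to double-check is that the cluster sizes in~\eqref{equation:clustersizes} give the correct parity and totals, which they do.
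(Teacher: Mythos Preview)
Your proposal is correct and the construction is essentially the same as the paper's: pair up $V_1$ and $V_2$ and match each pair with a distinct vertex of $V_0$, yielding $3k-1$ edges of type $\{0,1,1\}$ and $3k+1$ of type $\{0,2,2\}$. The paper simply writes down this matching directly (after moving two vertices from $V_2$ to $V_1$ to equalise the three parts), whereas you precede the construction with a counting argument showing that any perfect matching in $H\setminus(H_{112}\cup H_{122})$ must have this form; that extra analysis is not needed for existence but is a nice observation.
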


\begin{proof}
	Let~$k$ be such that~$n=18k$.
	Let $a, b$ be two distinct vertices in $V_2$,
	and let $V'_1 = V_1 \cup \{a, b\}$ and $V'_2 = V_2 \setminus \{ a, b \}$.
	Note that $|V_0| = |V'_1| = |V'_2| = 6k$.
	Let $V_0 = \{ x_1, \dotsc, x_{6k} \}$, $V'_1 = \{ y_1, \dotsc, y_{6k} \}$ and $V'_2 = \{ z_1, \dotsc, z_{6k} \}$, with $y_1 = a$ and $y_2 = b$.
	Then 
	$$F = \{ y_{2i-1} y_{2i} x_{2i-1} : 1 \leq i \leq 3k \} \cup \{ z_{2i-1} z_{2i} x_{2i} : 1 \leq i \leq 3k \}$$
	is a perfect matching in which %
	every edge intersects $V_0$ in exactly one vertex.
	Thus $F$ has no edge in $H_{112} \cup H_{122}$, as required.
\end{proof}

We are now ready to show Theorem~\ref{theorem:counterexample}.

\begin{proof}[Proof of Theorem~\ref{theorem:counterexample}]
	Consider the $3$-graph $H = H_n$ given in Definition~\ref{definition:H},
	and consider the perfect matching $F \subseteq H \setminus (H_{112} \cup H_{122})$ given by Lemma~\ref{lemma:F}.
	Let $\ell' \in \{ 4, \dotsc, \ell + 3 \}$ be such that $|E(H-F)| + \ell' \equiv 0 \bmod \ell$.
	Since $n = 18k \ge 3(\ell + 3)$, we have $|V_0| = 6k \ge \ell+3 \ge \ell'$.
	To $H-F$, we add a cycle $C$ of length $\ell'$, edge-disjoint from $H-F$, which is entirely contained in $V_0$.
	We claim $H' = (H \setminus F) \cup C$ has all of the desired properties.
	
	We first check $H'$ is $C_{\ell}$-divisible.
	We start by checking $H'$ is $3$-vertex-divisible.
	Indeed, let $x \in V(H')$ be arbitrary.
	We have $\deg_H(x) \equiv 1 \bmod 3$ by Lemma~\ref{lemma:Hmod3}\ref{item:degreeH},
	we have $\deg_F(x) = 1$ since $F$ is a perfect matching,
	and $\deg_C(x) \equiv 0 \bmod 3$ since $C$ is a cycle on $\ell' \ge 4$ vertices.
	Thus $\deg_{H'}(x) \equiv 1-1+0 \equiv 0 \bmod 3$ for all $x \in V(H')$, as required.
	Also, the number of edges of $H'$ is $|E(H')| = |E(H-F)| + \ell'$, which was chosen to be divisible by $\ell$, so indeed $H'$ is $C_{\ell}$-divisible.
	
	Now we check $H'$ has large codegree.
	It suffices to show $H-F$ has large codegree.
	Removing a perfect matching from $H$ decreases the codegree of every pair at most by $1$,
	thus by Lemma~\ref{lemma:Hcodegree}, we have $\delta_2(H-F) \geq \delta_2(H) - 1 \geq (2n-12)/3 - 1 = (2n - 15)/3$.
	
	Now we prove $H'$ does not have a tour decomposition.
	First, since $F \subseteq H \setminus (H_{112} \cup H_{122})$,
	we have $H'[V_1, V_1, V_2] = H_{112}$ and $H'[V_1, V_2, V_2] = H_{122}$.
	For a contradiction, suppose that $W^{1}, \dotsc, W^r$ are tours forming a tour decomposition in $H'$.
	For a walk $W$, let $W_{112} = H_{112} \cap E(W)$,
	and let $W_{122} = H_{122} \cap E(W)$.
	Since the tours are edge-disjoint and cover all edges of $H'$, we have $\sum_{1 \leq i \leq r}|W^{i}_{112}| = |H_{112}|$ and $\sum_{1 \leq i \leq r}|W^{i}_{122}| = |H_{122}|$.
	Since $H_{012} = \emptyset$, Lemma~\ref{lemma:crucial} implies that $|W^i_{112}| \equiv |W^i_{122}| \bmod 3$ for each $1 \leq i \leq r$.
	We deduce $|H_{112}| \equiv |H_{122}| \bmod 3$, but this contradicts Lemma~\ref{lemma:Hmod3}\ref{item:Hmod}.
\end{proof}

\begin{remark}
    For sufficiently large values of $n$, we can make our example vertex-regular instead of $C_{\ell}$-divisible.
    This is needed, for instance, when we are looking at decompositions into spanning vertex-disjoint collections of cycles, such as Hamilton cycles.
    
    Start from $H = H_n$, and remove $F$ as before to get to $H' = H-F$ which is $3$-vertex-divisible.
    Every vertex in $V_i$ has the same degree $d_i$, for all $i \in \{0,1,2\}$, and a calculation reveals that $d_1 = d_0 - 9$ and $d_2 = d_0 - 3$.
    Then, adding $3$ edge-disjoint Hamilton cycles to $H[V_1]$ and one Hamilton cycle to $H[V_2]$ leaves a $3$-graph $H^\ast$ in which every vertex has degree $d_0$, and it can be similarly proved that $H^\ast$ does not admit any tour decomposition.
\end{remark}

\section{Proof of Corollaries~\ref{theorem:cycledecomposcodegree} and and~\ref{corollary:codegreeuler}} \label{section:corollaries}

In this short section we deduce Corollaries~\ref{theorem:cycledecomposcodegree}
and~\ref{corollary:codegreeuler} from Theorem~\ref{theorem:ldecomposcodegree}.

\begin{proof}[Proof of Corollary~\ref{theorem:cycledecomposcodegree}]
	Let $m$ be the number of edges of $H$, and write it as $m = 9q+r$ for some $q \ge 1$ and $0 \leq r < 9$.
	Find a cycle $C$ of length $9+r$ in $H$: this can be done greedily (see Section~\ref{subsection:paths} for details).
	Then, $H' = H - C$ is a $3$-divisible graph, its minimum codegree is $\delta_2(H') \ge \delta_2(H) - 2 \ge (2/3 + \eps/2)n$, and its number of edges is $m-(9+r) = 9(q-2)$, which is divisible by $9$.
	By Theorem~\ref{theorem:ldecomposcodegree}, $H'$ has a $C_{9}$-decomposition, together with $C$ this is a cycle decomposition of~$H$.
\end{proof}

For the proof of Corollary~\ref{corollary:codegreeuler} we use the strategy of Glock, Joos, Kühn, and Osthus~\cite{GJKO2020}.
Crucial part of their argument is (using our terminology) to first find a trail $W$ which is \emph{spanning} (i.e. every $2$-tuple of distinct vertices of $H$ is contained as a sequence of consecutive vertices of $W$) but at the same time is sparse (it satisfies $\Delta_{2}(W) = o(n)$).

We state their relevant lemma only in the particular case $k = 3$.
A $3$-graph $H$ on $n$ vertices is \emph{$\alpha$-connected} if for all distinct $v_1, v_2, v_4, v_5 \in V(H)$, there exist at least $\alpha n$ vertices $v_3 \in V(H)$ such that $v_1 v_2 v_3 v_4 v_5$ is a walk in $H$.

\begin{lemma}[{\cite[Lemma 5]{GJKO2020}}] \label{lemma:randomwalk}
	Suppose $n\in \NN$ is sufficiently large in terms of $\alpha$.
	Suppose $H$ is an $\alpha$-connected $3$-graph on $n$ vertices.
	Then $H$ contains a spanning trail $W$ satisfying $\Delta_2(W) \leq \log^3 n$.
\end{lemma}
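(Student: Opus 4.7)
The plan is to construct $W$ by a randomized iterative procedure that threads together all ordered pairs of distinct vertices of $H$. I would list the $N = n(n-1)$ ordered pairs as $p_1, \dotsc, p_N$ in an arbitrary order and grow a nested sequence of trails $W_1 \subseteq \dotsb \subseteq W_N$ such that $W_i$ contains each of $p_1, \dotsc, p_i$ as a pair of consecutive vertices. To extend $W_{i-1}$, terminating at some pair $(u,v)$, to $W_i$ with target $p_i = (x, y)$, I would append a short random \emph{bridge} of intermediate vertices between $v$ and $x$. Each bridge vertex has at least $\alpha n$ valid placements by $\alpha$-connectivity, and I would pick uniformly among those that additionally avoid creating a repeated edge. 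Using a bridge long enough (e.g.\ two intermediate vertices) guarantees that no single newly added edge contains both a fixed vertex from $\{u,v\}$ and a fixed vertex from $\{x,y\}$, which will be decisive for the codegree analysis.

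The key step is to bound $\Delta_2(W)$ by $\log^3 n$. For a fixed pair $\{a,b\}$, I would split the contributions to $\deg_W(ab)$ into deterministic ones (from new edges whose two fixed vertices happen to be $a, b$) and random ones (from a random bridge vertex hitting $a$ or $b$). The deterministic contributions are $O(1)$ per pair: since no single new edge contains vertices from both the previous terminus and the current target, the only deterministic sources are the $O(1)$ stages where $\{a,b\}$ itself appears as a previous terminus or a target. The random contributions are controlled by noting that a random bridge vertex equals $a$ or $b$ with probability $O(1/(\alpha n))$, and only at the $O(n)$ stages where one of the four fixed endpoints of the bridge lies in $\{a,b\}$ can this produce a new edge containing both $a$ and $b$. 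Together this gives expected codegree $O(1/\alpha) = O(1)$ for each pair.

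A standard Chernoff-type concentration inequality (or a bounded-differences martingale applied to the sequence of random bridge choices) will then show $\deg_W(ab) \le \log^3 n$ with probability at least $1 - n^{-\omega(1)}$, and a union bound over the $\binom{n}{2}$ pairs will yield $\Delta_2(W) \le \log^3 n$ with positive probability. The main obstacle is that the random choices at different stages are not independent: the set of valid bridge vertices at stage $i$ depends on the whole history through the non-repetition constraint. This is handled by inductively maintaining the bound $\Delta_2(W_i) \le \log^3 n$ throughout, which ensures that the number of non-repeating bridge candidates at each stage remains $(1 - o(1))\alpha n$, so the random choice is essentially uniform over a large set and the concentration arguments go through cleanly.
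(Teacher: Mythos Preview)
The paper does not prove this lemma at all: it is quoted verbatim from \cite[Lemma~5]{GJKO2020} and used as a black box in the proof of Corollary~\ref{corollary:codegreeuler}. So there is no ``paper's own proof'' to compare your proposal against.

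That said, your sketch is a reasonable outline of how such a statement is typically proved, and indeed is close in spirit to the argument in \cite{GJKO2020}. A couple of points deserve attention if you want to turn this into a full proof. First, the definition of $\alpha$-connected in this paper only gives \emph{one} intermediate vertex $v_3$ between a pair $(v_1,v_2)$ and a pair $(v_4,v_5)$, not two; to get two intermediate vertices you need to compose, e.g.\ first pick $w_1$ in $N_H(uv)$ (which has size at least $\alpha n$, a consequence of $\alpha$-connectivity) and then apply $\alpha$-connectivity to go from $(v,w_1)$ to $(x,y)$ via some $w_2$. Second, $\alpha$-connectivity as stated requires the four endpoints to be distinct, so when the current terminus and the next target share a vertex you need a small workaround (reorder the list of pairs, or insert a buffer step). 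Third, for the concentration step you cannot use Chernoff directly because of the sequential dependencies; a martingale inequality such as Freedman's, applied to the increments conditioned on the history (which you correctly bound by $O(1/(\alpha n))$ at the $O(n)$ relevant stages), is the right tool. With these adjustments your plan goes through.
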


\begin{proof}[Proof of Corollary~\ref{corollary:codegreeuler}]
	Take $n_0$ such that $1/n_0 \ll \eps$.
	Since $H$ satisfies $\delta_2(H) \ge (2/3 + \eps)n$, it is $\eps$-connected.
	By Lemma~\ref{lemma:randomwalk} there exists a spanning trail $W = w_1 \dotsb w_r$ satisfying $\Delta_2(W) \leq \log^3 n$.
	Use the $\eps$-connected property of $H$ to close $W$ to a tour, using three extra vertices, while avoiding edges previously used by $W$ (using that $\Delta_2(W) \leq \log^3 n$).
	The resulting $W' = w_1 \dotsb w_{r+3}$ is a spanning tour which satisfies $\Delta_2(W') \leq 2 \log^3 n$.
	Let $H' = H - W'$.
	Since $W'$ is a tour and $H$ is $3$-vertex-divisible, $W'$ is $3$-vertex-divisible as well.
	Since $\Delta_2(W') \leq 2 \log^3 n \leq \eps n / 2$ and $\delta_2(H) \ge (2/3 + \eps)n$,
	we deduce $\delta_2(H') \ge (2/3 + \eps/2)n$.
	Since $n$ is sufficiently large,
	Corollary~\ref{theorem:cycledecomposcodegree} implies that $H'$ has a cycle decomposition.
	Fix one of those cycles $C= v_1 v_2 \dotsb v_m$ and note that the ordered pair $(v_1, v_2)$ must appear consecutively in some part of $W'$ (since $W'$ is spanning).
	We may write $W' = W'_1 v_1 v_2 W'_2$ and extend $W'$ by taking $W'_1 v_1 v_2 \dotsb v_m v_1 v_2 W'_2$, which is still an spanning tour, but now uses the edges of $C$ in addition to those of $W'$.
	Attaching the cycles of the decomposition  one by one to $W'$, we obtain the desired Euler tour.
\end{proof}

\section{Iterative absorption: proof of Theorem~\ref{theorem:ldecomposcodegree}}\label{Section:IterativeAbsorption}

Our proof of Theorem~\ref{theorem:ldecomposcodegree} follows the strategy of \emph{iterative absorption} introduced by Barber, Kühn, Lo, and Osthus~\cite{BKLO2016} and further developed by Glock, Kühn, Lo, Montgomery, and Osthus \cite{GKLMO2019} to study decomposition thresholds in graphs.
We base our outline in the exposition of Barber, Glock, Kühn, Lo, Montgomery, and Osthus~\cite{BGKLMO2020}.

The method of iterative absorption rests around three main lemmata,
originally called the the Vortex Lemma, Absorbing Lemma, and the Cover-Down Lemma.
We will introduce these lemmata first while explaining the global strategy,
then we will use them to prove Theorem~\ref{theorem:ldecomposcodegree}.
The proof of these lemmata will take up the rest of the paper.

A sequence of nested subsets of vertices~$U_0\supseteq U_1\supseteq\dots \supseteq U_\ell$ is called a \emph{$(\delta, \xi, m)$-vortex in~$H$} if satisfies the following properties. 

\begin{enumerate}[(V1)]
	\item $U_0 = V(H)$,
	\item for each $1 \leq i \leq \ell$, $|U_i| = \lfloor \xi | U_{i-1}| \rfloor$,
	\item $|U_{\ell}| = m$, and
	\item $\deg(x, U_i) \ge \delta \binom{|U_i|}{2}$ for each $1 \leq i \leq \ell$ and $x \in U_{i-1}$, and
	\item $\deg(xy, U_i) \ge \delta |U_i|$ for each $1 \leq i \leq \ell$ and $xy \in \binom{U_{i-1}}{2}$.
\end{enumerate}

The existence of vortices for suitable parameters~$\delta$,~$\xi$, and~$m$ is stated in the Vortex Lemma. 

\begin{lemma}[Vortex Lemma] \label{lemma:vortexlemma}
	Let $\xi, \delta > 0$ and $m' \in \NATS$ be such that $1/m' \ll \xi$.
	Let $H$ be a $3$-graph on $n \ge m'$ vertices with $\delta_2(H) \ge \delta$.
	Then it has a $(\delta - \xi, \xi, m)$-vortex, for some $\lfloor \xi m' \rfloor \leq m \leq m'$.
\end{lemma}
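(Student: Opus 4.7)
I would build the vortex level by level using random sampling. Starting from $U_0 = V(H)$, and assuming $U_{i-1}$ has been constructed, pick $U_i$ uniformly at random among subsets of $U_{i-1}$ of size $\lfloor \xi |U_{i-1}| \rfloor$, stopping at the first index $\ell$ for which $|U_\ell| \le m'$. Since $\xi < 1$ and $|U_{\ell-1}| > m'$, the resulting size satisfies $\lfloor \xi m' \rfloor \le |U_\ell| \le m'$, which matches the statement. Conditions (V1)--(V3) are automatic from this construction, so the substance of the argument is confirming that (V4) and (V5) hold throughout with positive probability.

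For a fixed pair $xy \in \binom{U_{i-1}}{2}$, the codegree $\deg(xy, U_i)$ is hypergeometric with mean $(|U_i|/|U_{i-1}|)\deg(xy, U_{i-1})$, so a Chernoff-type bound for hypergeometric variables yields
\[\PP\!\left[\deg(xy, U_i) < \tfrac{|U_i|}{|U_{i-1}|}\deg(xy, U_{i-1}) - t\right] \le \exp\!\left(-\Omega(t^2/|U_i|)\right).\]
The link-degree $\deg(x, U_i) = |H(x) \cap \binom{U_i}{2}|$ is a function of the random subset $U_i$ where swapping one vertex alters the value by at most $|U_i|$, so McDiarmid's bounded-differences inequality (or, alternatively, passing to the independent Bernoulli$(\xi)$ sampling model and applying Chernoff to the resulting sum of indicators) gives a matching concentration inequality. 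Choosing the slack $t = |U_i|^{2/3}$ drives both failure probabilities below $|U_{i-1}|^{-3}$, so a union bound over the $O(|U_{i-1}|^2)$ relevant events at each of the $\ell = O(\log n)$ levels still succeeds with positive probability; conditioning on the corresponding good event at each step gives a valid deterministic choice of $U_i$.

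The total density loss accumulated across all levels is at most $\sum_{i=1}^{\ell} O(|U_i|^{-1/3})$, which is a geometric sum dominated by its last term $O((\xi m')^{-1/3})$. By the hypothesis $1/m' \ll \xi$, this is much smaller than $\xi$, so every $U_i$ inherits codegree and link-degree densities at least $\delta - \xi$ from $U_{i-1}$, yielding the desired $(\delta - \xi, \xi, |U_\ell|)$-vortex. The main obstacle to being careful about is precisely this accounting: one must verify that the concentration of the link-degree (a $2$-variate statistic of a random subset, not a plain sum) is of the same quality as that of the codegree, and that the error budget is dominated by the last level. Both checks are standard, and the geometric decay of the $|U_i|$ makes the union-bounded sum comfortable.
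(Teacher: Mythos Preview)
Your proposal is correct and follows essentially the same approach as the paper: build the nested sets one level at a time by uniform random sampling, use hypergeometric/Chernoff concentration at each step, and check that the accumulated density loss is a geometric-type sum bounded by $O((\xi m')^{-1/3}) \ll \xi$. The paper's write-up is in fact terser than yours---it simply cites its Chernoff theorem for both the codegree and the link-degree without flagging the distinction you raise---so your remark that the link-degree is a quadratic statistic (handled via McDiarmid, or, more simply, by summing the already-established codegree bounds $\deg(xy,U_i)\ge \delta'|U_i|$ over $y\in U_i$) is a welcome clarification rather than a deviation.
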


The main idea is to use the properties of the vortex to find a suitable~\emph{$C_\ell$-packing}, i.e. a collection of edge-disjoint~$C_\ell\subseteq H$.
We will find a packing covering most edges of $H$, and moreover the non-covered edges will lie entirely in~$U_\ell$. 
The Absorbing Lemma will provide us with a small structure that we put aside at the beginning, and that will be used to deal with the small remainder left by our $C_{\ell}$-packing.
If $R \subseteq H$ is a subgraph of $H$, a \emph{$C_{\ell}$-absorber for $R$} is a subgraph $A \subseteq H$, edge-disjoint from $R$, such that both $A$ and $A \cup R$ are $C_{\ell}$-decomposable.

\begin{lemma}[Absorbing Lemma] \label{lemma:absorbinglemma}
	Let $\ell \ge 7$, $\eps > 0$, and $n, m \in \NATS$ such that $1/n \ll \eps, 1/m, 1/\ell$.
	Let $H$ be a $3$-graph on $n$ vertices with $\delta_2(H) \ge (2/3 + \eps)n$.
	Let $R \subseteq H$ be $C_{\ell}$-divisible on at most $m$ vertices.
	Then there exists a $C_{\ell}$-absorber for $R$ in $H$ with at most~$(4m\ell)^9$ edges.
\end{lemma}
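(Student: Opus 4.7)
The plan is to build the absorber via the \emph{transformer} method of Barber--K\"uhn--Lo--Osthus~\cite{BKLO2016}. The absorber takes the form $A := T \cup R'$, where $R' \subseteq H \setminus R$ is a graph that is $C_\ell$-decomposable by construction and $T \subseteq H \setminus (R \cup R')$ is an $(R, R')$-\emph{transformer}: both $T \cup R$ and $T \cup R'$ admit $C_\ell$-decompositions. Granting these two ingredients, $A$ is an absorber for $R$: $A = T \cup R'$ decomposes via the transformer decomposition of $T \cup R'$, while $A \cup R = (T \cup R) \cup R'$ decomposes by combining the transformer decomposition of $T \cup R$ with a decomposition of $R'$.

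The first step is easy: since $R$ is $C_\ell$-divisible we have $e(R) = k\ell$ for some $k \in \NN$, so I would greedily place $k$ pairwise vertex-disjoint tight cycles $C_\ell$ on fresh vertex sets inside $H \setminus V(R)$, letting $R'$ be their union. The codegree condition $\delta_2(H) \geq (2/3+\eps)n$ together with $n \gg m\ell$ makes this greedy embedding routine via standard tight-path extension.

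The second (and main) step is the construction of $T$. Fix an arbitrary bijection $\phi : E(R) \to E(R')$. For each $e \in E(R)$ with image $f := \phi(e)$, I would build a small \emph{switching gadget} $G_e$ on $O(\ell)$ fresh vertices, edge-disjoint from $R \cup R'$ and from all previously built gadgets, with the following design goals: $G_e \cup \{e\}$ should contain a distinguished tight cycle $C_\ell$ using $e$, and $G_e \cup \{f\}$ should contain a distinguished tight cycle $C_\ell$ using $f$, while the remaining edges of $G_e$ are structured so that, pooled across all gadgets, they partition into additional tight $C_\ell$'s in both decomposition schemes. Setting $T := \bigcup_e G_e$, the coordinated design then yields the required $C_\ell$-decompositions of $T \cup R$ and $T \cup R'$. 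Each gadget is built greedily by iteratively extending tight paths, using the codegree condition to avoid the polynomially-bounded set of previously used edges; since each gadget contributes only $O(\ell^{O(1)})$ edges and $|E(R)| \leq m^3/6$, the final graph $A$ has at most $O(m^3 \ell^{O(1)})$ edges, comfortably within $(4m\ell)^9$.

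The principal obstacle, and the combinatorial heart of the proof, is the design of the switching gadgets so that their leftover edges compose coherently into tight $C_\ell$'s in both decomposition schemes simultaneously. Each gadget must be both locally ``swap-correct'' and globally composable; the latter typically requires auxiliary bridge cycles spanning multiple gadgets. For $3$-uniform tight cycles this is considerably more intricate than the graph analogue of~\cite{BKLO2016,GKLMO2019}, and will likely rely on specific extendibility properties of tight paths in hypergraphs with codegree above $2/3$, together with a careful accounting of degrees modulo~$3$ and edge-counts modulo~$\ell$ so that the $C_\ell$-divisibility of the pooled leftover structure is guaranteed.
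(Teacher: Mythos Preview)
Your transformer framework is correct in spirit---the paper does use transformers---but you have misplaced where the real work lies, and the per-edge gadget scheme is not viable as stated. First, observe that independent per-edge gadgets are impossible: if $G_e \cup \{e\}$ and $G_e \cup \{f\}$ were each $C_\ell$-decomposable with $V(e) \cap V(f) = \emptyset$ (as you arrange by placing $R'$ on fresh vertices), then any $v \in e$ would need $\deg_{G_e}(v) \equiv 2 \pmod 3$ from the first decomposition and $\deg_{G_e}(v) \equiv 0 \pmod 3$ from the second. You anticipate this by allowing the leftovers to be ``pooled across all gadgets,'' but an arbitrary bijection $\phi\colon E(R) \to E(R')$ carries no structure to coordinate these leftovers, and your final paragraph concedes this is unresolved. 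The paper does not refine per-edge switching; it takes a genuinely different two-stage route.

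In the first stage (Lemma~\ref{lemma:tours}) the paper finds $A_1$, edge-disjoint from $R$ and itself $C_\ell$-decomposable, so that $R \cup A_1$ has a \emph{tour} decomposition. The mechanism is to track an arbitrary \emph{tour--trail} decomposition of $R$ through its \emph{residual digraph} $D(\cT)$, the multiset of ordered end-pairs of the trails. A key identity (Lemma~\ref{lemma:outindeg}) forces $d^+(x) \equiv d^-(x) \pmod 3$ at every vertex of $D(\cT)$; small $C_\ell$-decomposable gadgets (Lemmata~\ref{lemma:doublestep}--\ref{lemma:prism}) are then used to add prescribed arcs to $D(\cT)$---a directed $4$-cycle, or a pair of vertex-disjoint oriented triangles via a ``prism''---so as to reduce $D(\cT)$ first to a ``sea of triangles'' (Lemma~\ref{lemma:seaoftriangles}) and then to the empty digraph. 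Only in the second stage (Lemma~\ref{lemma:tourstocycles}) do transformers appear, and crucially they are built between a \emph{tour} and a \emph{cycle} of equal length (Lemma~\ref{lemma:ladders}): both carry a cyclic order in which consecutive edges share two vertices, and it is precisely this structure---absent from your unstructured $R$---that makes the ladder construction go through.
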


Finally, we construct the desired~$C_\ell$-packing step by step through the nested sets of the vortex. 
More precisely, suppose $U_i \supseteq U_{i+1}$ are two consecutive sets in a vortex of $H$.
The Cover-Down Lemma will be applied to find a $C_{\ell}$-packing which covers every edge of $H[U_i]$, except maybe for some in $H[U_{i+1}]$.
Thus the packing will be found via reiterated applications.

\begin{lemma}[Cover-Down Lemma] \label{lemma:coverdownlemma}
	Let $\ell \ge 9$ be divisible by $3$ or at least $10^7$,
	and $\eps, \mu > 0$ and $n \in \NATS$ with $1/n \ll \mu, \eps \ll 1/\ell$.
	Suppose $H$ is a $3$-graph on $n$ vertices, and $U \subseteq V(H)$ with $|U| = \lfloor \eps n \rfloor$, which satisfy
	\begin{enumerate}[{\upshape (C1)}]
	    \item $\delta_2(H) \ge (2/3 + 2\eps)n$,
	    \item $\deg_H(x, U) \ge (2/3 + \eps)\binom{|U|}{2}$ for each $x \in V(H)$,
	    \item $\deg_H(xy, U) \ge (2/3 + \eps)|U|$ for each $xy \in \binom{V(H)}{2}$, and
	    \item $\deg_H(x)$ is divisible by $3$ for each $x \in V(H) \setminus U$.
	\end{enumerate}
	Then $H$ has a $C_{\ell}$-decomposable subgraph $F$ such that $H - H[U] \subseteq F$,
	and $\Delta_2(F[U]) \leq \mu n$.
\end{lemma}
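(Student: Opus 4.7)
The plan is to build $F$ in three stages: reserve a small $C_\ell$-absorber inside $H$ that sits mostly outside $U$; pick a divisibility-adjustment subgraph $F_0 \subseteq H[U]$ so that $F$ becomes $C_\ell$-divisible; and near-$C_\ell$-decompose $F$ via a R\"odl-nibble-type random greedy algorithm steered away from $H[U]$, letting the reserved absorber mop up the sparse leftover.

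The key structural observation, used throughout the nibble, is that a tight cycle $C_\ell$ has an edge entirely inside $U$ only when three of its consecutive vertices lie in $U$. Restricting the nibble to cycles whose runs through $U$ have length at most two therefore keeps $F[U]$ essentially untouched by the main construction. Such cycles exist in abundance because (C1) gives $\delta_2(H) \ge (2/3 + 2\eps)n$, which lies safely above the fractional-$C_\ell$-decomposition threshold for $3$-graphs, and $|U| = \lfloor \eps n \rfloor$ is small enough that the portions of each cycle outside $U$ dominate. Standard Chernoff/McDiarmid concentration for random greedy packings shows that after the nibble the leftover $L$ is sparse---say $\Delta_2(L) \le n^{1-\alpha}$ for some $\alpha > 0$---and that pairs inside $U$ have been used at most $\mu n/4$ times.

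For the divisibility setup, observe that any $C_\ell$-decomposable $F$ with $H - H[U] \subseteq F$ must satisfy $3 \mid \deg_F(x)$ at every vertex and $\ell \mid |E(F)|$. At $x \notin U$ the divisibility is automatic from (C4), since $\deg_F(x) = \deg_H(x)$. At $x \in U$ the required residue of $\deg_{F[U]}(x)$ modulo $3$, and the residue of $|E(F[U])|$ modulo $\ell$, are forced; conditions (C2)--(C3) give enough density in $H[U]$ that a short greedy/random selection produces an adjustment $F_0$ with $\Delta_2(F_0) \le \mu n/10$ realizing these residues. The leftover $L$ of the nibble is then absorbed using Lemma~\ref{lemma:absorbinglemma}, which supplies a $C_\ell$-absorber for $L$ of bounded size in terms of $m$ and $\ell$. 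Reserving this absorber at the outset (before nibbling) guarantees edge-disjointness and ensures that the final graph $F = (H - H[U]) \cup F_0 \cup (\text{absorber})$ satisfies $\Delta_2(F[U]) \le \mu n$.

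The principal obstacle is the simultaneous bookkeeping of divisibility and of the codegree inside $U$: the adjustment $F_0$, the reserved absorber, and the pairs inside $U$ touched by nibble cycles all contribute to $F[U]$, and their combined codegree must remain below $\mu n$. The hypothesis that $3 \mid \ell$ or $\ell \ge 10^7$ comes in precisely at the cycle-construction steps, ensuring that tight cycles of the required ``shape''---balanced between $U$ and $V(H) \setminus U$ and respecting divisibility modulo $3$ at every vertex---can actually be realized without exceeding this codegree budget.
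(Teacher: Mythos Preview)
Your proposal has a genuine structural gap in the absorber step. Lemma~\ref{lemma:absorbinglemma} produces a $C_\ell$-absorber only for a \emph{specific} $R$ on at most $m$ vertices, where $1/n \ll 1/m$; it cannot be reserved in advance for an unknown leftover, and even a leftover $L$ with $\Delta_2(L) \le n^{1-\alpha}$ will typically span $\Theta(n)$ vertices. So ``reserve an absorber, nibble, then absorb $L$'' does not go through with the tools available. (This is exactly why, in the proof of Theorem~\ref{theorem:ldecomposcodegree}, absorbers are only placed for leftovers confined to the \emph{final} vortex set $U_\ell$ of bounded size; the Cover-Down Lemma is what gets you to that situation, so you cannot lean on the Absorbing Lemma to prove it.)

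The paper's argument is quite different and worth contrasting. Instead of an absorber, one sets aside sparse random slices $R_1$ (of the type-$1$ edges, those with one vertex in $U$) and $R_2$ (type-$2$ edges) with probabilities $p_1 \ll p_2 \ll \mu$. One then removes $H[U]\cup R_1\cup R_2$ and finds a well-behaved approximate $C_\ell$-decomposition of the rest (this is where the hypothesis ``$3\mid\ell$ or $\ell\ge 10^7$'' is actually used: in the first case $C_\ell$ is $3$-partite so a greedy packing plus local repair works, in the second case the Joos--K\"uhn fractional result applies). The sparse leftover is then eliminated type by type via the Extending Lemma: type-$0$ leftover edges are extended to $\ell$-cycles through $R_1\cup R_2\cup H[U]$; then type-$1$ through $R_2\cup H[U]$; finally for type-$2$ edges one looks at the link graph $G_v\subseteq H[U]$ of each $v\notin U$, uses (C4) to see $|E(G_v)|\equiv 0\bmod 3$, applies Thomassen's $P_3$-decomposition theorem, and extends the resulting $5$-vertex tight paths through $H[U]$. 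Each extension round touches pairs inside $U$ only $o(\mu n)$ times because the path families are $\gamma$-sparse, which is what controls $\Delta_2(F[U])$. Your proposal omits this layered type-$0$/type-$1$/type-$2$ treatment (and in particular the Thomassen step, which is where (C4) is genuinely consumed), and misidentifies where the $\ell$-hypothesis enters.
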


Assuming lemmata~\ref{lemma:absorbinglemma}--\ref{lemma:coverdownlemma}, we prove Theorem~\ref{theorem:ldecomposcodegree} holds
(cf.~\cite[Section 3.4]{BGKLMO2020}).

\begin{proof}[Proof of Theorem~\ref{theorem:ldecomposcodegree}]
	It is enough to show that, for every $\eps > 0$, there exists $n_0$ such that every $C_{\ell}$-divisible $3$-graph $H$ on $n \ge n_0$ vertices with $\delta_2(H) \ge (2/3 + 8 \eps)n$ admits a $C_{\ell}$-decomposition.
	Given $\eps$ and $\ell$, we fix $m',n_0$ such that 
	\begin{align}\label{constants}
	    1/n_0 \ll 1/m' \ll \eps, 1/\ell.
	\end{align}
	Let $H$ on $n \ge n_0$ vertices as before, we are done if we show $H$ has a $C_{\ell}$-decomposition.
	
	\medskip
	\noindent \emph{Step 1: Setting the vortex and absorbers.}
	By Lemma~\ref{lemma:vortexlemma}, $H$ has a $(2/3 + 7\eps, \eps, m)$-vortex $U_0 \supseteq \dotsb \supseteq U_{\ell}$, for some $m$ such that $\lfloor \eps m' \rfloor \leq m \leq m'$.
	
	Let $\mathscr{L}$ be the family of all $C_{\ell}$-divisible $3$-graphs which are subgraphs of $H[U_{\ell}]$.
	Since $|U_{\ell}| = m$, clearly~$\vert \mathscr L\vert \leq 2^{\binom{m}{3}}$.
	Let $L \in \mathscr{L}$ be arbitrary.
	Since~$m\leq m'$ and~\eqref{constants}, a suitable application of Lemma~\ref{lemma:absorbinglemma} yields a~$C_\ell$-absorber~$A_{L}\subseteq H\setminus H[U_1]$ of~$L$ with at most~$(4m\ell)^9$ edges.
	Since $1/n \ll 1/m, \eps, 1/\ell$, removing the edges of $A_L$ only barely affects the codegree of $H$, thus we can repeat the argument to obtain an absorber $A_{L'} \subseteq H\setminus H[U_1]$ for some $L' \neq L$, edge-disjoint from $A_L$.
	Since the total number of $L \in \mathscr{L}$ is tiny with respect to $n$, we can iterate this argument to obtain edge-disjoint~$C_\ell$-absorbers~$A_{L}\subseteq H\setminus H[U_1]$, one for each~$L\in \mathscr L$. 
	Moreover, each~$A_{L}$ contains at most~$(4m\ell)^9$ edges, and hence, the union~$A=\bigcup_{L\in \mathscr L} A_{L} \subseteq H \setminus H[U_1]$ contains at most~$|\mathscr{L}|(4m\ell)^9 \leq 2^{\binom{m}{3}} (4m\ell)^9 \leq \eps n$ edges.
	By construction, we have $A$ is $C_{\ell}$-decomposable and for each $L \in \mathscr{L}$, $L \cup A$ is $C_{\ell}$-decomposable.
	
	Let $H'=H\setminus A$ and observe that~$\delta_2(H') \geq (2/3+7\eps)n$ and~$U_0 \supseteq \dotsb \supseteq U_{\ell}$ is a $(2/3+6 \eps, \eps, m)$-vortex for~$H'$
	(for this, it is crucial that $A \subseteq H \setminus H[U_1]$).
	Notice that since~$A$ and~$H$ are~$C_\ell$-divisible, we get that~$H'$ is~$C_\ell$-divisible.
	
	\medskip
	\noindent \emph{Step 2: The cover-down.}
	Now we aim to find a $C_{\ell}$-packing in $H'$ using every edge of $H' \setminus H'[U_{\ell}]$.
	Let $U_{\ell + 1} = \emptyset$.
	For each $0 \leq i \leq \ell$ we wish to find $H_i \subseteq H'[U_i]$ such that
	\begin{enumerate}[(a$_{i}$)]
	    \item \label{item:decomposition-coverdown-a} $H' - H_i$ has a $C_{\ell}$-decomposition,
	    \item \label{item:decomposition-coverdown-b} $\delta_2(H_i) \ge (2/3 + 4 \eps) |U_i|$,
	    \item \label{item:decomposition-coverdown-c} $\deg_{H_i}(x, U_{i+1}) \ge (2/3 + 5 \eps ) \binom{|U_{i+1}|}{2}$ for all $x \in U_i$,
	    \item \label{item:decomposition-coverdown-d} $\deg_{H_i}(xy, U_{i+1}) \ge (2/3 + 5 \eps ) |U_{i+1}|$ for all $x, y \in U_i$, and
	    \item \label{item:decomposition-coverdown-e} $H_i[U_{i+1}] = H'[U_{i+1}]$.
	\end{enumerate}
	For $i = 0$ this can be done by setting $H_0 = H'$.
	Now suppose $H_i$ satisfying \ref{item:decomposition-coverdown-a}--\ref{item:decomposition-coverdown-e} is given for some $0 \leq i < \ell$, we wish to construct $H_{i+1}$ satisfying \hyperref[item:decomposition-coverdown-a]{(a$_{i+1}$)}--\hyperref[item:decomposition-coverdown-e]{(e$_{i+1}$)}.
	By~\ref{item:decomposition-coverdown-a}, $H_i$ is $C_{\ell}$-divisible.
	Let $H'_{i} = H_i \setminus H_i[U_{i+2}]$.
	By \ref{item:decomposition-coverdown-b}--\ref{item:decomposition-coverdown-d} and $|U_{i+2}| \leq \eps |U_{i+1}| \leq \eps^2 |U_i|$, we have
	\begin{enumerate}[(C1)]
	    \item $\delta_2(H'_i) \ge \delta_2(H_i) - |U_{i+2}| \ge (2/3 + 3 \eps)|U_i|$,
	    \item $\deg_{H'_i}(x, U_{i+1}) \ge \deg_{H_i}(x, U_{i+1}) - |U_{i+2}|(|U_{i+1}|-1) \ge (2/3 + 3 \eps) \binom{|U_{i+1}}{2}$, for each $x \in U_i$,
	    \item $\deg_{H'_i}(xy, U_{i+1}) \ge \deg_{H'_i}(xy, U_{i+1}) - |U_{i+2}| \ge (2/3 + 4 \eps) |U_{i+1}|$ for each $x, y \in U_i$, and
	    \item $\deg_{H'_i}(x)$ is divisible by $3$ for each $x \in U_i \setminus U_{i+1}$.
	\end{enumerate}
	This allows us to apply Lemma~\ref{lemma:coverdownlemma} with $\eps, \eps^4,|U_i|, H'_i, U_{i+1}$ playing the r\^oles of $\eps, \mu, n, H, U$.
	We obtain a $C_{\ell}$-decomposable subgraph $F_i \subseteq H'_i$ such that $H'_i \setminus H'_i[U_{i+1}] \subseteq F_i$
	and that $\Delta_2(F_i[U_{i+1}]) \leq \eps^4 |U_{i}|$.
	Let $H_{i+1} = H_i[U_{i+1}] \setminus F_i$, we prove it satisfies the required properties.
	
	Clearly $F_i$ is $C_{\ell}$-divisible and $F_i \subseteq H'_i \subseteq H_i$, so \ref{item:decomposition-coverdown-a} implies that $H' - H_{i+1} = (H' - H_{i}) \cup F_i$ has a $C_{\ell}$-decomposition, thus \hyperref[item:decomposition-coverdown-a]{(a$_{i+1}$)} holds.
	From~\ref{item:decomposition-coverdown-d} and $\Delta_2(F_i[U_{i+1}]) \leq \eps^4 |U_{i}| \le \eps^2 |U_{i+1}|$, we have
	$\delta_2(H_{i+1}) \ge (2/3 + 5 \eps) |U_{i+1}| - \eps^2 |U_{i+1}| \ge (2/3 + 4 \eps) |U_{i+1}|$, proving~\hyperref[item:decomposition-coverdown-b]{(b$_{i+1}$)}.
	
	By the properties of $(2/3 + 6 \eps, \eps, m)$-vortices, we have $\deg_{H'}(x, U_{i+2}) \ge (2/3 + 6 \eps) \binom{|U_i|}{2}$ for each $x \in U_{i+1}$,
	together with $\Delta_2(F_i[U_{i+1}]) \leq \eps^2 |U_{i+1}|$ and~\ref{item:decomposition-coverdown-e} we deduce \hyperref[item:decomposition-coverdown-c]{(c$_{i+1}$)} holds, and \hyperref[item:decomposition-coverdown-d]{(d$_{i+1}$)} can be verified similarly.
	Finally, since $F_i \subseteq H'_i = H_i \setminus H_i[U_{i+1}]$, we have $F_i[U_{i+2}]$ is empty therefore $H_{i+1}[U_{i+2}] = H_i[U_{i+2}] = H'[U_{i+2}]$, which verifies~\hyperref[item:decomposition-coverdown-e]{(e$_{i+1}$)}.
	
	Now $H_{\ell} \subseteq H'[U_{\ell}]$ is such that $H'\setminus H_{\ell}$ has a $C_{\ell}$-decomposition.
	
	\medskip
	\noindent \emph{Step 3: Finish.}
	Since both $H'$ and $H' \setminus H_{\ell}$ are $C_{\ell}$-divisible, we deduce $H_{\ell} \subseteq H'[U_\ell]$ is $C_{\ell}$-divisible.
	Therefore,~$H_\ell\in\mathscr L$ and by construction of~$A$ we know that $H_{\ell} \cup A$ is $C_{\ell}$-decomposable.
	Since $H$ is the edge-disjoint union of $H' \setminus H_{\ell}$ and $H_{\ell} \cup A$, and both of them have $C_{\ell}$-decompositions, we deduce $H$ has a $C_{\ell}$-decomposition, as desired.
\end{proof}

\section{Useful tools} \label{section:tools}

We collect various results to be used during the proof of Lemmatas \ref{lemma:absorbinglemma}--\ref{lemma:coverdownlemma}.

\subsection{Counting path extensions} \label{subsection:paths}

The following lemma find short trails between prescribed pairs of vertices.
For a $3$-graph $H$, a set of vertices $U \subseteq V(H)$, and a set of pairs $G \subseteq \binom{V(H)}{2}$ let $\delta_2^{(3)}(H; U, G)$ be the minimum of $|N(e_1) \cap N(e_2) \cap N(e_3) \cap U|$ over all possible choices of $e_1, e_2, e_3 \in G$. 
This is the size of the minimum joint neighbourhood in~$U$ of three distinct pairs in $G$.
Also, let $\delta_2^{(3)}(H; U) = \delta_2^{(3)}(H, U, \binom{V(H)}{2})$ and $\delta_2^{(3)}(H) = \delta_2^{(3)}(H; V(H))$.

\begin{lemma} \label{lemma:therearemanycycles}
    Let $\eps > 0$ and $n, \ell \in \mathbb{N}$ be such that $\ell \ge 5$ and $1/n \ll \eps, 1/\ell$.
    Let $H$ be a $3$-graph on $n$ vertices, $U \subseteq V(H)$ and $G \subseteq \binom{V(H)}{2}$ such that $\{uv\in \binom{V(H)}{2}\colon u\in U\}\subseteq G$. 
    Suppose $\delta_2^{(3)}(H; U, G) \ge 2 \eps n$.
    Then, for every two disjoint pairs $v_1 v_2$ and $v_{\ell -1} v_{\ell}$ in $G$ there exist at least $(\eps n)^{\ell-4}$ many $(v_1, v_2, v_{\ell - 1}, v_{\ell})$-paths on $\ell$ vertices, whose internal vertices are in $U$.
\end{lemma}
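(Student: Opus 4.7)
The plan is to construct the desired path greedily, choosing its internal vertices $v_3, v_4, \ldots, v_{\ell-2}$ one at a time so that each of the $\ell-4$ steps admits at least $\eps n$ choices. Multiplying over the steps will then give the claimed lower bound of $(\eps n)^{\ell-4}$ paths.

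A preliminary observation simplifies the setup. Since the hypothesis forces $|U| \geq 2\eps n$, the family $\binom{U}{2}$ is contained in $G$ and has many more than three elements. Therefore, given any one or two distinct pairs $e, e' \in G$, we may adjoin pairs from $\binom{U}{2}$ to form a triple of distinct pairs in $G$, and since common neighbourhoods in $U$ only shrink as we add more constraints, the three-pair codegree hypothesis yields the single- and two-pair versions
\[
|N(e)\cap U| \;\geq\; 2\eps n, \qquad |N(e)\cap N(e')\cap U| \;\geq\; 2\eps n.
\]
With this tool in hand, the intermediate steps are routine. For $3 \leq i \leq \ell-3$, assuming that $v_1 v_2 \cdots v_{i-1}$ has been built with $v_3,\ldots,v_{i-1} \in U$ all distinct from $\{v_1, v_2, v_{\ell-1}, v_\ell\}$, we must pick $v_i \in N(v_{i-2}v_{i-1}) \cap U$ avoiding the at most $\ell$ already-touched vertices. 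The pair $v_{i-2}v_{i-1}$ lies in $G$ either by the hypothesis (if $i=3$, since $v_1 v_2 \in G$) or because it contains $v_{i-1} \in U$ (if $i\geq 4$), so the observation gives at least $2\eps n$ candidates, leaving $\geq \eps n$ after discarding the forbidden vertices (using $n \gg \ell/\eps$).

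The main subtlety is the last step, $i = \ell-2$, where the vertex $v_{\ell-2}$ has to be linked up to the fixed terminus pair and so must lie simultaneously in
\[
N(v_{\ell-4}v_{\ell-3}) \;\cap\; N(v_{\ell-3}v_{\ell-1}) \;\cap\; N(v_{\ell-1}v_{\ell}) \;\cap\; U .
\]
For $\ell \geq 6$ the required three-pair codegree bound applies directly: the first two pairs each contain the already-chosen internal vertex $v_{\ell-3}\in U$ and therefore lie in $G$, the third pair is in $G$ by hypothesis, and the three pairs are pairwise distinct since $\{v_{\ell-4},v_{\ell-3}\}$ is disjoint from $\{v_{\ell-1},v_\ell\}$. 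This yields $\geq 2\eps n - \ell \geq \eps n$ admissible choices for $v_{\ell-2}$, and multiplying across all $\ell-4$ greedy steps completes the proof. The one point of genuine care, and the place where the proof could fail, is the choice of which three pairs to apply the codegree hypothesis to at this final step: pairing the terminus pair $v_{\ell-1}v_\ell$ with the $U$-anchored pairs $v_{\ell-4}v_{\ell-3}$ and $v_{\ell-3}v_{\ell-1}$ (rather than, say, $v_{\ell-3}v_{\ell-2}$) is what keeps all three pairs inside $G$. The edge case $\ell = 5$ collapses the first and last steps, and because the middle pair $v_{\ell-3}v_{\ell-1} = v_2 v_4$ is not anchored in $U$, it needs to be addressed separately rather than through the generic argument above.
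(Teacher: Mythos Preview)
Your argument is the same as the paper's: greedily choose $v_3,\ldots,v_{\ell-3}\in U$ one at a time using the single-pair bound, then pick $v_{\ell-2}$ as a common $U$-neighbour of the three pairs $v_{\ell-4}v_{\ell-3}$, $v_{\ell-3}v_{\ell-1}$, $v_{\ell-1}v_\ell$. You are in fact more careful than the paper in checking at each step that the pair being used lies in $G$, and your derivation of the one- and two-pair bounds by padding with pairs from $\binom{U}{2}$ is a nice touch.

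The gap is that you flag the case $\ell=5$ as needing separate treatment and then stop. You are right to be suspicious, and the case cannot be repaired: for $\ell=5$ the triple of pairs collapses to $v_1v_2,\ v_2v_4,\ v_4v_5$, and nothing in the hypotheses forces $v_2v_4\in G$. Since every $(v_1,v_2,v_4,v_5)$-path must have its middle vertex in $N_H(v_2v_4)$, and the assumption $\delta_2^{(3)}(H;U,G)\ge 2\eps n$ gives no information about pairs outside $G$, the conclusion can fail outright when $v_2v_4\notin G$. The paper's own proof simply asserts that the three pairs ``all belong to $G$'', which is only justified for $\ell\ge 6$; so this is a shared oversight rather than a defect specific to your write-up. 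In the paper's applications the issue never arises (the lemma is only invoked with $G=\binom{V(H)}{2}$ or with effective length at least $6$), so the clean fix is to state the lemma for $\ell\ge 6$, or to add the extra hypothesis $v_2v_{\ell-1}\in G$.
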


\begin{proof}
    Every pair of vertices in $G$ has at least $2 \eps n$ neighbours in $U$.
    For each $1 \leq i \leq \ell - 3$, since $\{uv\in \binom{V(H)}{2}\colon u\in U\}\subseteq G$ we can build a path $v_1 v_2 \dotsm v_{i}$ such that $\{ v_{i-1}, v_{i} \} \in G$ by choosing vertices in $U$ greedily.
    The path is then finished by choosing $v_{\ell-2}$ as a common neighbour in $U$ of the pairs $v_{\ell - 4} v_{\ell - 3}$, $v_{\ell - 3} v_{\ell - 1}$ and $v_{\ell - 1} v_{\ell}$, all of which belong to $G$.
    At any step we only need to avoid choosing one of vertices already chosen so far, which are at most $\ell \leq \eps n$.
    Thus in each step there are at least $\eps n$ possible choices, which gives the desired bound.
\end{proof}

In the particular for a $3$-graph~$H$ with~$\delta_2(H)\geq (2/3+\eps)n$ a simple application of Lemma~\ref{lemma:therearemanycycles} with $U = V(H)$ and $G = \binom{V(H)}{2}$ implies the existence of many trails of length~$\ell\geq 5$ between arbitrary pairs of vertices.

Sometimes we want find many paths which also avoid a small prescribed set of vertices or edges, for instance to extend paths into cycles.
This is accomplished as follows.

\begin{lemma} \label{lemma:wedestroypoquito}
    Let $\eps, \mu > 0$ and $n, \ell \in \mathbb{N}$ be such that $\ell \ge 5$ and $1/n \ll \mu \ll \eps,1/\ell$.
    Suppose that $v_1, v_2, v_{\ell-1}, v_{\ell} \in V(H)$ and there are at least $2 \eps n^{\ell-4}$ many $(v_1, v_2, v_{\ell - 1}, v_{\ell})$-paths on $\ell$ vertices in $H$.
    Let $F \subseteq H$ with $\Delta_2(F) \leq \mu n$.
    Then there are at least $\eps n^{\ell - 4}$ many $(v_1, v_2, v_{\ell - 1}, v_{\ell})$-paths on $\ell$ vertices in $H \setminus F$.
\end{lemma}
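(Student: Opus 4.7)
The plan is to upper bound the number of $(v_1, v_2, v_{\ell-1}, v_\ell)$-paths on $\ell$ vertices in $H$ that use at least one edge of $F$, showing this count is at most $\eps n^{\ell-4}$. Subtracting from the hypothesis of $2\eps n^{\ell-4}$ total paths then leaves at least $\eps n^{\ell-4}$ surviving paths in $H\setminus F$, as required.

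To execute this, I would decompose a bad path $v_1 v_2\dotsb v_\ell$ by the position $i\in\{1,\dots,\ell-2\}$ at which it uses an edge of $F$, meaning $v_iv_{i+1}v_{i+2}\in F$. A union bound over $i$ reduces the task to the following estimate: for each fixed $i$, the number of sequences $(v_3,\dots,v_{\ell-2})$ (with $v_1,v_2,v_{\ell-1},v_\ell$ fixed) whose $i$-th triple lies in $F$ is at most $\Delta_2(F)\,n^{\ell-5}\le\mu n^{\ell-4}$. The counting splits by how many of $v_i,v_{i+1},v_{i+2}$ coincide with the fixed endpoints, but in every case the bound holds uniformly: one chooses freely two of the three vertices of the offending edge (at most $n$ choices for each free one, and no choice needed for any that is already fixed), after which the third vertex is restricted to the at most $\Delta_2(F)$ neighbours in $F$ of the chosen pair; the remaining free positions are then filled with at most $n$ choices each. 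A quick bookkeeping of the number of free slots (which ranges from $\ell-5$ at the ends to $\ell-7$ in the interior, always compensated by how many of the triple's vertices are prescribed) confirms the bound $\Delta_2(F)\,n^{\ell-5}$ in every case.

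Summing over the $\ell-2$ choices of position yields a total of at most $(\ell-2)\mu\, n^{\ell-4}$ bad paths, which is smaller than $\eps n^{\ell-4}$ by the hierarchy $\mu\ll\eps,1/\ell$. This gives the desired lower bound. There is no real obstacle beyond the uniform case analysis for boundary positions $i\in\{1,2,\ell-3,\ell-2\}$, where some vertices of the offending edge are already among $\{v_1,v_2,v_{\ell-1},v_\ell\}$; in each such case the loss in free vertices is exactly compensated by the absence of a free choice, preserving the same count.
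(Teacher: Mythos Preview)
Your proposal is correct and follows essentially the same approach as the paper: both bound the number of paths using an edge of $F$ by a union bound over the position of the offending edge, obtaining at most $(\ell-2)\mu n^{\ell-4}\le \eps n^{\ell-4}$ bad paths. The only cosmetic difference is that the paper bounds the interior positions $3\le j\le \ell-4$ via the total edge count $|E(F)|\,n^{\ell-7}\le \mu n^{\ell-4}$ rather than via $\Delta_2(F)$, but this yields the same estimate.
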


\begin{proof}
    The number of $(v_1, v_2, v_{\ell - 1}, v_{\ell})$-paths on $\ell$ vertices such that $v_{1} v_{2} v_3 \in F$ is at most $\deg_F(v_{1} v_{2}) n^{\ell - 5} \leq \Delta_{2}(F) n^{\ell - 5} \leq \mu n^{\ell - 4}$.
	Similar bound are obtained for the paths of the same form such that $v_{\ell - 2} v_{\ell - 1} v_{\ell} \in F$, $v_3 v_4 v_5 \in F$, or $v_{\ell-3} v_{\ell - 2} v_{\ell - 1} \in F$.
	Finally, the paths such that $v_j v_{j+1} v_{j+2} \in F$ for some $3 \leq j \leq \ell - 4$ is at most $\vert E(F)\vert n^{\ell - 7} \leq \mu n^{\ell - 4}$.
	All together, the number of paths destroyed by passing from $H$ to $H \setminus F$ is at most $(\ell - 2) \mu n^{\ell - 4} \leq \eps n^{\ell - 4}$, where the last inequality uses $\mu \ll \eps$.
\end{proof}

The following is an immediate corollary of Lemma~\ref{lemma:therearemanycycles} and Lemma~\ref{lemma:wedestroypoquito}.

\begin{cor} \label{corollary:pathextension}
    Let $\eps > 0$ and $n, \ell, \ell' \in \mathbb{N}$ be such that $1/n \ll \mu \ll \eps \ll \eps',1/\ell, 1/\ell'$ and $\ell \ge \ell'+1$.
    Let $H$ be a $3$-graph on $n$ vertices, $U \subseteq V(H)$ and $G \subseteq \binom{V(H)}{2}$ such that $\{uv\in \binom{V(H)}{2}\colon u\in U\}\subseteq G$.
    Suppose $\delta_2^{(3)}(H; U,G) \ge 2 \eps' n$.
    Let $P$ be a path on $\ell'$ vertices in $H$, whose two endpoints are in $G$.
    Then there are at least $\eps n^{\ell - \ell'}$ many cycles $C$ on $\ell$ vertices which contain $P$, and $V(C) \setminus V(P) \subseteq U$.
\end{cor}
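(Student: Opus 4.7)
The plan is to view cycles of length $\ell$ through $P=v_1v_2\dotsb v_{\ell'}$ as path extensions. Precisely, such a cycle $C$ with $V(C)\setminus V(P)\subseteq U$ corresponds bijectively to a $(v_{\ell'-1},v_{\ell'},v_1,v_2)$-path $Q$ on $\ell-\ell'+4$ vertices whose $\ell-\ell'$ internal vertices all lie in $U\setminus V(P)$. So it suffices to count paths of this form, and then invoke Lemma~\ref{lemma:therearemanycycles} with the value $\ell-\ell'+4$ playing the role of the $\ell$ in that lemma (note $\ell-\ell'+4\ge 5$ since $\ell\ge\ell'+1$, satisfying the hypothesis there). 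The endpoint pairs $s(P)=\{v_1,v_2\}$ and $t(P)=\{v_{\ell'-1},v_{\ell'}\}$ belong to $G$ by assumption.

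To avoid self-intersections of the prospective cycle, I apply Lemma~\ref{lemma:therearemanycycles} not with $U$ itself but with $U'=U\setminus V(P)$. Since $|V(P)|=\ell'$ is bounded and $1/n\ll 1/\ell'$, restricting the set of potential internal vertices from $U$ to $U'$ costs at most $\ell'$ in every joint neighbourhood, giving $\delta_2^{(3)}(H;U',G)\ge 2\eps'n-\ell'\ge \eps' n$. The structural hypothesis $\{uv\in\binom{V(H)}{2}:u\in U'\}\subseteq G$ is inherited from $U$. Applying Lemma~\ref{lemma:therearemanycycles} with parameter $\eps'/2$ then yields at least $((\eps'/2)n)^{\ell-\ell'}$ such paths $Q$, and each such $Q$ glued to $P$ produces a distinct cycle $C$ on $\ell$ vertices containing $P$ with $V(C)\setminus V(P)\subseteq U'\subseteq U$.

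It remains only to compare the count with the target $\eps n^{\ell-\ell'}$; using $\eps\ll\eps',1/\ell$ we get $(\eps'/2)^{\ell-\ell'}\ge (\eps'/2)^{\ell}\ge \eps$, so $((\eps'/2)n)^{\ell-\ell'}\ge \eps n^{\ell-\ell'}$, as required. I do not expect any real obstacle: the only substantive point is the switch from $U$ to $U\setminus V(P)$ to prevent the extension path from re-using vertices of $P$, and everything else is straightforward parameter bookkeeping against the given hierarchy.
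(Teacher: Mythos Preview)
Your argument is correct and is exactly the proof the paper intends when it declares this an immediate corollary of Lemma~\ref{lemma:therearemanycycles}: apply that lemma between the two endpoint pairs of $P$, with $U$ replaced by $U\setminus V(P)$ so that the extension avoids the interior of $P$, and absorb the constant losses into the hierarchy $\eps\ll\eps',1/\ell$. The paper additionally cites Lemma~\ref{lemma:wedestroypoquito} and carries a stray $\mu$ in the hierarchy, but neither is actually needed for the statement as written; your direct use of Lemma~\ref{lemma:therearemanycycles} alone is the clean reading.
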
 

Observe that for a~$3$-graph~$H$ with~$\delta_2(H)\geq (2/3+\eps)n$ and a set of vertices~$W\subseteq V(H)$ with~$\vert W\vert < \eps n/2$, a simple application of Corollary~\ref{corollary:pathextension}  with~$U=V(H)\setminus W$ and~$G=\binom{V(H)}{2}$ yields the existence of many cycles containing one fix path~$P$ and avoiding the set of vertices~$W$.

\subsection{Probabilistic tools}
We shall use the following concentration inequalities~\cite[Corollary 2.3, Corollary 2.4, Remark 2.5, Theorem 2.10]{RandomGraphs}.

\begin{theorem} \label{theorem:chernoff}
	Let $X$ be a random variable which is a sum of $n$ independent $\{0,1\}$-random variables, or hypergeometric with parameters $n, N, M$.
	\begin{enumerate}[{\upshape (i)}]
	    \item \label{item:chernoff-noexpec} If $x \ge 7 \expectation[X]$, then $\probability[X \ge x] \leq \exp( - x )$,
	    \item \label{item:chernoff-largep} $\probability[ |X - \expectation[X]| \ge t ] \leq 2 \exp( - 2 t^2 / n)$, and
	    \item \label{item:chernoff-smallp} $\probability[ |X - \expectation[X]| \ge t ] \leq 2 \exp( - t^2 / (3 \expectation[X] ))$.
	\end{enumerate}
\end{theorem}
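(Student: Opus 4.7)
The theorem is a compendium of classical Chernoff--Hoeffding--Bernstein concentration inequalities, imported verbatim from Janson--Łuczak--Ruciński; the plan below sketches the standard moment generating function (MGF) argument, which handles all three bounds and both distributional settings uniformly.

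For a sum $X = X_1 + \dotsb + X_n$ of independent $\{0,1\}$ variables with $\mu = \EE[X]$, I would start from the pointwise bound
\[ \EE[e^{tX_i}] = 1 + \PP[X_i = 1](e^t - 1) \leq \exp\bigl(\PP[X_i = 1](e^t - 1)\bigr), \]
which together with independence gives $\EE[e^{tX}] \leq \exp(\mu(e^t - 1))$, and then Markov's inequality yields $\PP[X \ge x] \leq \exp(\mu(e^t - 1) - tx)$ for every $t > 0$. For (i), I would choose $t = \ln(x/\mu)$, so that the exponent becomes $x - \mu - x\ln(x/\mu)$; the hypothesis $x/\mu \ge 7$ forces $\ln(x/\mu) \geq \ln 7 > 13/7$, and a short calculation then shows that this exponent is at most $-x$, as desired. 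For (ii), I would invoke Hoeffding's lemma, $\EE[e^{\lambda(X_i - \EE X_i)}] \leq e^{\lambda^2/8}$ for any variable supported in $[0,1]$, and optimize $\lambda$ to obtain the sub-Gaussian tail bound with variance proxy $n/8$; a factor-of-two union bound covers both tails. For (iii), I would exploit $\mathrm{Var}(X_i) \leq \EE[X_i]$ to sharpen the MGF bound, and then perform an asymmetric optimization in $t$ to recover the $3 \EE[X]$ in the denominator (this is the standard Bernstein manipulation).

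For the hypergeometric case with parameters $n, N, M$, I would reduce to the binomial case via Hoeffding's classical coupling result: if $Y$ is hypergeometric and $Y' \sim \mathrm{Bin}(n, M/N)$, then $\EE[f(Y)] \leq \EE[f(Y')]$ for every convex function $f$. Applied with $f(x) = e^{tx}$, this dominates the hypergeometric MGF by the binomial MGF, and hence all three bounds transfer verbatim. The main obstacle is not mathematical but bookkeeping: pinning down the explicit constants $7$ in (i) and $3$ in (iii) requires carefully tracking the MGF optimization, though both fall out of the standard manipulations. Since the whole package is textbook material, in the paper itself one would simply cite it.
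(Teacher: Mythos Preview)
Your sketch is correct and, as you yourself anticipate in the last line, the paper does exactly what you suggest: it states the theorem without proof and cites \cite[Corollary~2.3, Corollary~2.4, Remark~2.5, Theorem~2.10]{RandomGraphs}. Your MGF outline and the Hoeffding convex-ordering reduction for the hypergeometric case are the standard arguments behind those cited results, so there is nothing to compare.
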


The following lemma allows us to bound the tail probabilities of sums of sequentially-dependent $\{0,1\}$-random variables by comparing them with binomial random variables.
We use the probability-theoretic notion of conditioning in a sequence of random variables, which in our application will take the following form.
If $X_1, \dotsc, X_i$ are random variables, $\probability[X_i = 1 | X_1, \dotsc, X_{i-1}] \leq p_i$ means that the probability of $X_i = 1$ is always at most $p_i$, even after conditioning on any possible output of $X_1, \dotsc, X_{i-1}$.

\begin{theorem} \label{theorem:jain}
	Let $X_1, \dotsc, X_t$ be Bernoulli random variables (not necessarily independent) such that for each $1 \leq i \leq t$ we have $\probability[X_i = 1 | X_1, \dotsc, X_{i-1}] \leq p_i$.
	Let $Y_1, \dotsc, Y_t$ be independent Bernoulli random variables such that $\probability[Y_i = 1] = p_i$ for all $1 \leq i \leq t$.
	If $X = \sum_{i=1}^t X_i$ and $Y = \sum_{i=1}^t Y_i$,
	then $\probability[X \ge k] \leq \probability[Y \ge k]$ for all $k \in \{0, 1, \dotsc, t \}$.
\end{theorem}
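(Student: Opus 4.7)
The plan is to establish this by a standard pointwise coupling argument: construct the $X_i$'s and $Y_i$'s on a single probability space so that $X_i \le Y_i$ with probability one, which immediately gives $X \le Y$ pointwise and hence $\probability[X \ge k] \le \probability[Y \ge k]$ for every $k$.

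To build the coupling, I would proceed inductively on $i$. Introduce independent uniform random variables $U_1, \dotsc, U_t$ on $[0,1]$. Suppose that the joint law of $(X_1, \dotsc, X_{i-1})$ has already been realised using $U_1, \dotsc, U_{i-1}$, and let
\[
q_i \coloneqq \probability[X_i = 1 \mid X_1, \dotsc, X_{i-1}],
\]
which by assumption satisfies $q_i \le p_i$ (note $q_i$ is a function of the realised values $X_1, \dotsc, X_{i-1}$). Now define
\[
X_i \coloneqq \mathbbm{1}[U_i \le q_i] \qand Y_i \coloneqq \mathbbm{1}[U_i \le p_i].
\]
Since $q_i \le p_i$, we have $X_i \le Y_i$ deterministically. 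Moreover, $U_i$ is independent of the past $U_1, \dotsc, U_{i-1}$, so conditional on $X_1, \dotsc, X_{i-1}$, the variable $X_i$ is Bernoulli with parameter $q_i$, as required by the hypothesis on the joint law. Similarly, $Y_i = \mathbbm{1}[U_i \le p_i]$ is Bernoulli($p_i$) and independent of $U_1, \dotsc, U_{i-1}$, hence independent of $Y_1, \dotsc, Y_{i-1}$; by induction the $Y_i$'s are jointly independent with the correct marginals.

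With the coupling in hand, the conclusion is immediate: since $X_i \le Y_i$ for every $i$ on the common probability space, $X = \sum_i X_i \le \sum_i Y_i = Y$, so $\{X \ge k\} \subseteq \{Y \ge k\}$ and thus $\probability[X \ge k] \le \probability[Y \ge k]$ for every $k \in \{0,1,\dotsc,t\}$.

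The only delicate point is verifying that the inductive construction genuinely realises the prescribed joint distribution of $(X_1, \dotsc, X_t)$; this is a routine consequence of the tower property, using that the conditional probability $q_i$ depends only on $(X_1, \dotsc, X_{i-1})$ and that $U_i$ is drawn independently of those. I do not anticipate any real obstacle here beyond making the coupling precise.
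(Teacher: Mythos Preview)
Your coupling argument is correct and complete. The paper does not actually write out a proof; it cites Jain's lemma (where all $p_i$ are equal) and remarks that the general case follows by the same induction on $t$. So the intended argument is a direct induction: condition on $X_t$ (or $X_1$), apply the inductive hypothesis to the remaining $t-1$ variables, and combine using the bound $\probability[X_t=1\mid X_1,\dotsc,X_{t-1}]\le p_t$ together with monotonicity of $k\mapsto\probability[\cdot\ge k]$.

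Your route is genuinely different but equally standard: rather than inducting on tail probabilities, you build a pointwise coupling with independent uniforms so that $X_i\le Y_i$ almost surely, from which the stochastic domination is immediate. The coupling has the advantage of being conceptually cleaner and yielding the stronger statement $X\le Y$ a.s.\ on the coupled space; the inductive proof is slightly more elementary in that it avoids constructing an auxiliary probability space and works purely with the distributions. Either is entirely adequate here.
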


The proof of Theorem~\ref{theorem:jain} was given by Jain~\cite[Lemma 7]{Raman1990} in the particular case where $p_i = p$ for all $1 \leq i \leq t$.
The slightly more general statement of Theorem~\ref{theorem:jain} follows by mimicking that proof (which goes by induction on $t$), so we omit it.

\section{Vortex Lemma} \label{section:vortex}

We prove Lemma~\ref{lemma:vortexlemma} by selecting random subsets (cf.~\cite[Lemma 3.7]{BGKLMO2020}).

\begin{proof}[Proof of Lemma~\ref{lemma:vortexlemma}]
    Let $n_0 = n$ and $n_i = \lfloor \xi n_{i-1} \rfloor$ for all $i \ge 1$.
    In particular, note $n_i \leq \xi^i n$.
    Let $\ell$ be the largest $i$ such that $n_i \ge m'$ and let $m = n_{\ell+1}$.
    Note that $\lfloor \xi m' \rfloor \leq m \leq m'$.
    
    Let $\xi_0 = 0$ and, for all $i \ge 1$, define $\xi_i = \xi_{i-1} + 2 (\xi^i n)^{-1/3}$.
    Thus we have
    \[ \xi_{\ell + 1} = 2 n^{-1/3} \sum_{i=1}^{\ell} (\xi^{-1/3})^i
    \leq 2 n^{-1/3} \sum_{i=1}^{\infty} (\xi^{-1/3})^i
    \leq \frac{2 (n \xi)^{-1/3}}{1 - \xi^{-1/3}} \leq \xi, \]
    where in the last inequality we used $1/m' \ll \xi$ and $n \ge m'$.
    
    Note that taking $U_0 = V(H)$ yields a $(\delta - \xi_0, \xi, n_0)$-vortex in $H$.
    Suppose we have already found a $(\delta - \xi_{i-1}, \xi, n_{i-1})$-vortex $U_0 \supseteq \dotsb \supseteq U_{i-1}$ in $H$ for some $i \leq \ell+1$.
    In particular, $\delta_2(H[U_{i-1}]) \ge (\delta - \xi_{i-1})|U_{i-1}|$.
    Let $U_{i} \subseteq U_{i-1}$ be a random subset of size $n_{i}$.
    By Theorem~\ref{theorem:chernoff},
    with positive probability we have, for all $x, y \in U_{i-1}$, $\deg(xy, U_i) \ge (\delta - \xi_{i-1} - n_i^{-1/3})|U_i|$ and $\deg(x, U_i) \ge (\delta - \xi_{i-1} - n_i^{-1/3}) \binom{|U_i|}{2}$.
    Since $\xi_{i-1} + n_i^{-1/3} \leq \xi_i$, we have found a $(\delta - \xi_i, \xi, n_i)$-vortex for $H$.
    In the end, we will have found a $(\delta -\xi_{\ell+1}, \xi, n_{\ell+1})$-vortex for $H$.
    Since we have $m = n_{\ell+1}$ and we have established $\xi_{\ell+1} \leq \xi$, we are done.
\end{proof}

\section{Cover-Down Lemma} \label{section:cover}

\subsection{Extending paths into cycles}

More than once during our proof, we will be faced with the following situation: we have a family of (not too many) edge-disjoint tight paths, and we want to extend each of these paths into a tight cycle of a given length, such that all of the obtained cycles are edge-disjoint.
In this subsection we will prove a lemma which will find such extensions for us.

Given a path~$P$ we say that a path or a cycle~$C$ is an \emph{extension of~$P$} if~$P\subseteq C$.
Let~$H$ be a~$3$-graph, for a path~$P\subseteq H$ and a pair of vertices $e \in \binom{V(H)}{2}$ we say that $P$ is of \emph{type~$r$} for~$e$, where $r=\max\{e\cap s(P), e\cap t(P)\}$.
The only possible types are~$0$, $1$, or $2$.

We say that a collection of edge-disjoint paths $\cP$ in~$H$ is \emph{$\gamma$-sparse} if, for each $e \in \binom{V(H)}{2}$ and each $r \in \{0,1, 2\}$, $\cP$ has at most $\gamma n^{3-r}$ paths $P$ of type $r$ for $e$.

\begin{lemma}[Extending Lemma] \label{lemma:extending}
	Let $\eps, \mu, \gamma > 0$ and $n, \ell, \ell' \in \mathbb{N}$ such that $\ell' \ge 4$, $\ell \ge \ell' + 2$ and $1/n \ll \gamma \ll \mu \ll \eps, 1/\ell$.
	Let $H_1, H_2$ be two edge-disjoint $3$-graphs on the same vertex set $V$ of size $n$.
	Let $P$ be the $3$-uniform tight path on $\ell'$ vertices,
	and let $\cP = \{ P_1, \dotsc, P_t \} $ be an edge-disjoint collection of copies of $P$ in $H_1$ such that
	\begin{enumerate}[{\upshape (F1)}]
		\item\label{item:extending-Fsparse} $\cP$ is $\gamma$-sparse, and
		\item \label{item:extending-manyextensions} for each $P_i \in \cP$, there exists at least $2 \eps n^{\ell - \ell'}$ copies of $C_{\ell}$ in $H_1 \cup H_2$ which extend $P_i$ using extra edges of $H_2$ only.
	\end{enumerate}
	Then, there exists a~$C_\ell$-decomposable subgraph~$F\subseteq H_1\cup H_2$, such that
	\begin{enumerate}[{\upshape (C1)}]
		\item \label{item:extending-weextend} $E(\cP) \subseteq F$, and 
		\item \label{item:extending-wesparse} $\Delta_2(F \setminus E(\cP)) \leq \mu n$.
	\end{enumerate}
\end{lemma}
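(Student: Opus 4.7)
The plan is to prove the Extending Lemma by a random greedy algorithm. Process the paths $P_1,\dotsc,P_t$ in an arbitrary order. At step $i$, having already selected cycles $C_1,\dotsc,C_{i-1}$, let $F'_{i-1} \subseteq H_2$ be the union of the ``new'' edges $E(C_j)\setminus E(P_j)$ for $j<i$. Pick $C_i$ uniformly at random among all $(v_1,v_2,v_{\ell'-1},v_{\ell'})$-paths on $\ell$ vertices extending $P_i$ into a cycle whose new edges lie in $H_2\setminus F'_{i-1}$. At the end set $F=\bigcup_i C_i$, which is by construction an edge-disjoint union of copies of $C_\ell$ containing $E(\mathcal{P})$.

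Two things must be verified. First, that the algorithm can always proceed: provided $\Delta_2(F'_{i-1})\le\mu n$, hypothesis \ref{item:extending-manyextensions} combined with Lemma~\ref{lemma:wedestroypoquito} guarantees at least $\eps n^{\ell-\ell'}$ valid extensions at step $i$. Second, and the main task, is to bound $\Delta_2(F')\le \mu n$ at the end (which is monotone, so this handles the previous assumption simultaneously by a standard stopping-time argument). Fix a pair $xy$ and split the contribution of each $P_i$ to $\deg_{F'}(xy)$ according to its type $r=\max\{|xy\cap s(P_i)|,|xy\cap t(P_i)|\}$:
\begin{itemize}
\item Type $2$: every extension of $P_i$ contains $xy$ in a new edge, contributing $1$ deterministically; by $\gamma$-sparsity only $\le\gamma n$ paths have this type, contributing at most $\gamma n\ll\mu n$.
\item Type $1$: at most $O(n^{\ell-\ell'-1})$ extensions of $P_i$ use $xy$ in a new edge, so conditional on the history the contribution is at most $O(n^{\ell-\ell'-1})/(\eps n^{\ell-\ell'})=O(1/(\eps n))$; there are $\le\gamma n^2$ such paths.
\item Type $0$: a new edge of any extension of $P_i$ meets $V(P_i)$ only inside $s(P_i)\cup t(P_i)$, so for $xy$ of type $0$ to appear in a new edge \emph{both} $x$ and $y$ must be new vertices. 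This forces at most $O(n^{\ell-\ell'-2})$ extensions using $xy$, giving conditional probability $O(1/(\eps n^2))$; there are $\le\gamma n^3$ such paths.
\end{itemize}
Summing expected contributions yields $O(\gamma n/\eps)+\gamma n\ll\mu n$ using $\gamma\ll\mu\ll\eps$. Stochastic domination via Theorem~\ref{theorem:jain} against independent Bernoullis, followed by Theorem~\ref{theorem:chernoff}\ref{item:chernoff-noexpec}, gives $\probability[\deg_{F'}(xy)>\mu n]\le\exp(-\Omega(\mu n))$. A union bound over the $\binom{n}{2}$ pairs shows that with positive probability (indeed, whp) $\Delta_2(F')\le\mu n$ and the algorithm never stalls.

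The main obstacle is the type-$0$ estimate: the naive bound $O(1/(\eps n))$ on the per-step probability, combined with the fact that $\gamma$-sparsity only forces $|\mathcal{P}|\le O(\gamma n^3)$, would give an expected codegree of order $\gamma n^2/\eps$, which exceeds $\mu n$ under our hierarchy. The structural observation that new edges cannot touch the path's interior forces both $x$ and $y$ to be new vertices and saves exactly the extra factor of $1/n$ needed. Handling multiplicities in $Y_i$ (when several new edges of $C_i$ happen to contain the same pair $xy$) is done by writing $Y_i$ as a sum of Bernoulli indicators, one per slot $j\in\{\ell'-1,\dotsc,\ell\}$, and applying Theorem~\ref{theorem:jain} to the full collection of indicators across all $i$ and $j$; after this routine step the concentration argument goes through as above.
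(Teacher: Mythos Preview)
Your approach is essentially the same as the paper's: a sequential random greedy extension, type-based conditional probability bounds exploiting $\gamma$-sparsity, stochastic domination via Theorem~\ref{theorem:jain} plus Theorem~\ref{theorem:chernoff}\ref{item:chernoff-noexpec}, and a union bound over pairs. Your identification of the key structural point---that for a type-$0$ pair both $x$ and $y$ must land among the $\ell-\ell'$ new vertices, buying the extra factor $1/n$---matches the paper exactly.

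One small technical wrinkle: your per-slot decomposition of $Y_i$ is not quite ``routine'' for Jain's theorem, because conditioning on earlier slots \emph{within the same step $i$} can drastically inflate the conditional probability of later slots (e.g.\ knowing $\{x,y\}\subseteq\{u_{a-1},u_a,u_{a+1}\}$ makes the probability that $\{x,y\}\subseteq\{u_a,u_{a+1},u_{a+2}\}$ of order $1/3$, not $O(1/n^2)$). The paper avoids this by using a \emph{single} indicator $X_i(e)=\mathbbm{1}[e\in\partial(C_i\setminus P_i)]$ per step and the bound $\deg_{G_t}(e)\le 2\sum_i X_i(e)$ coming from $\Delta_2(C_i)\le 2$; this keeps all conditioning in Jain's theorem purely across steps, where your probability estimates apply directly. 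With that adjustment your argument goes through unchanged.
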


\begin{proof}
	The idea is to pick, sequentially, an extension $C_i$ of $P_i$ into an $\ell$-cycle, chosen uniformly at random among all the extensions which do not use edges already used by $C_1, \dotsc, C_{i-1}$.
	Since $\cP$ is $\gamma$-sparse and there are plenty of choices for $C_i$ in each step, we expect that in each step the random choices do not affect the codegree of the graph formed by the unused edges in $H_2$ by much.
	This will ensure that, even after removing the edges used by $C_1, \dotsc, C_{i-1}$, there are still many extensions available for $P_i$.
	If all goes well, then we can continue the process until the end, thus achieving~\ref{item:extending-weextend} and~\ref{item:extending-wesparse} by setting $F = \bigcup_{1 \leq i \leq t} E(C_i)$.
	
	To formalise the above plan, we begin by noting that the removal of a sufficiently sparse $3$-graph from $H_2$, there are still many extensions available for each $P_i$.
	Given $G \subseteq H_2$ and $1 \leq i \leq t$,
	let $\ccC_i(G)$ be the set of \emph{$G$-avoiding cycle-extensions of $P_i$}, that is, the copies of $C_{\ell}$ in $H_1 \cup H_2$ which extend $P_i$ and use extra edges from $H_2 \setminus G$ only.
	By assumption, $|\ccC_i(\emptyset)| \ge 2 \eps n^{\ell - \ell'}$, thus Lemma~\ref{lemma:wedestroypoquito} implies that
	\begin{align}
		\text{if $G \subseteq H_2$ is such that $\Delta_2(G) \leq \mu n$, then $|\ccC_i(G)| \ge \eps n^{\ell - \ell'}$.} \label{claim:extending-sparseremoval}
	\end{align}
	
	We now describe the random process which outputs edge-disjoint extensions $C_i$ of $P_i$ for each $1 \leq i \leq t$.
	In the case of success each $C_i$ will be an $\ell$-cycle extending $P_i$.
	To account for the case of failure, in our description we will allow the degenerate case in which $C_i \setminus P_i$ is empty.
	
	For each $1 \leq i \leq t$, assume we have already chosen $C_1, C_2, \dotsc, C_{i-1} \subseteq H_1 \cup H_2$ edge-disjoint graphs, and we describe the choice of $C_i$.
	Let $G_{i-1} = \bigcup_{1 \leq j < i} E(C_j) \setminus E(P_j)$ correspond to the edges of $H_2$ used by the previous choices of $C_j$, which we need to avoid when choosing $C_i$
	(note that $G_0$ is empty).
	If $\Delta_2(G_{i-1}) \leq \mu n$, then by~\eqref{claim:extending-sparseremoval} we have $|\ccC_i(G_{i-1})| \ge \eps n^{\ell - \ell'}$, and we take~$C_i\in \ccC_i(G_{i-1})$ uniformly at random.
	Otherwise, if $\Delta_2(G_{i-1}) > \mu n$, let $C_i = P_i$.
	
	In any case, the process outputs a collection $C_1, \dotsc, C_{t}$ of edge-disjoint cycle or paths which extend $P_i$.
	Our task now is to show that with positive probability, there is a choice of $C_1, \dotsc, C_{t}$ such that $\Delta_2(G_{t}) \leq \mu n$.
	This would imply also that each $C_i$ was an $\ell$-cycle.
	Formally, for each $1 \leq i \leq t$, let $\cS_i$ be the event that $\Delta_2(G_{i}) \leq \mu n$.
	Thus it is enough to show $\probability[\cS_t] > 0$.
	
	Fix $e \in \binom{V}{2}$.
	For each $1 \leq i \leq t$,
	let $X_i(e)$ be the random variable which takes the value $1$ precisely if $e$ belongs to an edge of $C_i \setminus P_i$, and $0$ otherwise.
	Equivalently, $X_i(e) = 1$ if and only if $e$ belong to the shadow $\partial (C_i \setminus P_i)$.
	Since $\Delta_2(C_i) \leq 2$ for each $1 \leq i \leq t$, we have
	\begin{align}
		\deg_{G_i}(e) \leq 2 \sum_{j=1}^i X_j(e). \label{equation:degreeui}
	\end{align}
	For each $1 \leq i \leq t$, define \[p^\ast_i(e) := \min\left\{ 1, \frac{c}{n^{2-r}}\right\},\] where $r \in \{0,1,2\}$ is such that $P_i$ is of type $r$ for $e$, and $c := 4 \ell \eps^{-1}$.
	
	\begin{claim}
		For each $e \in \binom{V}{2}$ and $1 \leq i \leq t$,
		\[ \probability[X_i(e) = 1 | X_1(e), X_2(e), \dotsc, X_{i-1}(e) ] \leq p^\ast_i(e), \]
	\end{claim}
	
	\begin{proofclaim}[Proof of the claim.]
		Using conditional probabilities, we separate our analysis depending on whether $\cS_{i-1}$ holds or not.
		Assume first that $\cS_{i-1}$ fails.
		Then the process declares $C_i = P_i$, thus $C_i \setminus P_i$ is empty.
		Therefore $X_i(e) = 0$ regardless of the values of $X_1(e), \dotsc, X_{i-1}(e)$, and we have
		\[ \probability[X_i(e) = 1 | X_1(e), X_2(e), \dotsc, X_{i-1}(e), \cS_{i-1}^c ] = 0 \leq p^\ast_i(e). \]
		
		Now assume that $\cS_{i-1}$ holds.
		Then the set $G_{i-1}$ of edges to be avoided while constructing $C_i$ satisfies $\Delta_2(G_{i-1}) \leq \mu n$.
		By~\eqref{claim:extending-sparseremoval}, $C_i$ will be an $\ell$-cycle extending $P_i$ selected uniformly at random from the set $\ccC_i(G_{i-1})$, which has size at least $\eps n^{\ell - \ell'}$;
		and this will happen no matter the values of $X_1(e), \dotsc, X_{i-1}(e)$.
		
		If $P_i$ is of type $2$ for $e$,
		then we are required to bound a probability by $p^\ast_i(e) = 1$, which holds trivially.
		Suppose now that $P_i$ is of type $0$ for $e$, and suppose $P_i = v_1 v_2 \dotsm v_{\ell'}$.
		For $C_i \in \ccC_i(G_{i-1})$, $C_i \setminus P_i$ is a path of the form $v_{\ell'-1} v_{\ell'} u_1 u_2 \dotsb u_{\ell - \ell'} v_1 v_2$.
		We wish to estimate the number of such paths where $e \in \partial(C_i \setminus P_i)$.
		Since $P_i$ is of type $0$ for $e$, then $e \in \partial(C_i \setminus P_i)$ can only happen if $e = u_j u_k$ for $|j-k| \leq 2$.
		There are $(\ell - \ell' - 1) - (\ell - \ell' - 2) \leq 2 \ell$ choices for $j, k$.
		Having fixed those, there are two $2$ possibilities for assigning $e$ to $\{u_j, u_k\}$,
		and having fixed those, there are at most $n$ possibilities for each other $u_p$ with $p \notin \{j, k\}$.
		All together, the number of $C_i$ which extend $P_i$ and such that $e \in \partial(C_i \setminus P_i)$ is certainly at most $4 \ell n^{\ell - \ell' - 2}$.
		Thus we have \[ \probability[X_i(e) = 1 | X_1(e), X_2(e), \dotsc, X_{i-1}(e), \cS_{i-1} ] \leq \frac{4 \ell n^{\ell'-\ell-2}}{|\ccC_i(G_{i-1})|} \leq \frac{4 \ell}{\eps n^2} = \frac{c}{n^2} = p^\ast_i(e), \]
		as required.
		Finally, if $P_i$ is of type $1$ for $e$, then similar (but simpler) calculations show that $\probability[X_i(e) = 1 | X_1(e), X_2(e), \dotsc, X_{i-1}(e), \cS_{i-1} ] \leq \frac{6 n^{\ell'-\ell-1}}{|\ccC_i(G_{i-1})|} \leq \frac{c}{n} = p^\ast_i(e)$,
		and we are done.		
	\end{proofclaim}

	Now, we use that $\cP$ is $\gamma$-sparse to argue $\sum_{i=1}^t p_i^\ast(e)$ is suitably small.
	Indeed, for each $r \in \{0,1,2\}$, let $t_r$ be the number of $i \in \{1, \dotsc, t\}$ such that $P_i$ is of type $r$ for $e$.
	Since $\cP$ is $\gamma$-sparse, we have $t_r \leq \gamma n^{3 - r}$ for each $r \in \{0,1,2\}$.
	Therefore, we have
	\begin{align}
		\sum_{i=1}^t p^\ast_i(e)
		= t_0 \frac{c}{n^2} + t_1 \frac{c}{n} + t_2
		\leq \gamma c n + \gamma c n + \gamma n \leq \frac{\mu}{30} n, \label{equation:pibound}
	\end{align}
	where the last inequality follows from the choice of $c$ and $\gamma \ll \mu , \eps$.
	
	We now claim that 
	\begin{align}
		\probability\left[ \sum_{i=1}^t X_i(e) \geq \frac{\mu}{3} n \right] \leq \exp\left( - \frac{\mu}{3} n \right).
		\label{equation:extending-concentration}
	\end{align}
	Indeed, inequality \eqref{equation:pibound} implies that $7 \sum_{i=1}^t p^\ast_i(e) \leq \mu n / 3$,
	so the bound follows from Theorem~\ref{theorem:jain} combined with Theorem~\ref{theorem:chernoff}.
	
	For each $e \in \binom{V(H)}{2}$, let $X_e := \sum_{i=1}^t X_i(e)$.
	Let $\mathcal{E}$ be the event that $\max_e X_e \leq \mu n / 3$.
	By using an union bound over all the (at most $n^2$) possible choices of $e$ and using \eqref{equation:extending-concentration}, we deduce that $\mathcal{E}$ holds with probability at least $1 - o(1)$.
	
	Now we can show that $\cS_t$ holds with positive probability.
	We shall prove that $\probability[\cS_t | \mathcal{E}] = 1$, which then will imply $\probability[\cS_t] \ge \probability[\cS_t | \mathcal{E}] \probability[\mathcal{E}] \ge 1 - o(1)$.
	So assume $\mathcal{E}$ holds, that is, $\max_e X_e \leq \mu n / 3$.
	Note that $\cS_0$ holds deterministically, and suppose $1 \leq i \leq t$ is the minimum such that $\cS_i$ fails to hold.
	Since $\cS_{i-1}$ holds, using~\eqref{equation:degreeui} we deduce
	\begin{align*}
		\Delta_2( G_{i} )
		& \leq 2 + \Delta_2(G_{i-1}) = 2 + \max_{e} \deg_{G_{i-1}}(e)
		\leq 2 \left ( 1 + \max_e \sum_{j=1}^{i-1} X_i(e) \right) \\
		& \leq 2 \left ( 1 + \max_e X_e \right) \leq 2 \left(1 + \frac{\mu}{3}n \right) \leq \mu n,
	\end{align*}
	where in the second to last inequality we used $\mathcal{E}$,
	and in the last inequality we used $1/n \ll \mu$.
	Thus $\cS_i$ holds, a contradiction.
\end{proof}

\subsection{Well-behaved approximate cycle decompositions} \label{subsection:wellbehaved}

In this section we show the existence of approximate cycle decomposition which are `well-behaved', meaning that the subgraph left by the uncovered edges has small codegree.
The argument is different depending on the two setting considered by Theorem~\ref{theorem:ldecomposcodegree}, and we start with the former.

When $\ell$ is divisible by $3$, the tight cycle $C_{\ell}$ is $3$-partite.
By a well-known theorem from Erd\H{o}s~\cite[Theorem 1]{Erdos1964}, we know that the Tur\'an number of $C_{\ell}$ is degenerate, i.e. edge-maximal $C_{\ell}$-free $3$-graphs on $n$ vertices have at most $o(n^3)$ edges.
This allows us to find an approximate decomposition of any $3$-graph $H$ with copies of $C_{\ell}$ if $\ell$ is divisible by~$3$, simply by removing copies of $C_{\ell}$ greedily until $o(n^3)$ edges remain. 
This argument alone does not provide us with the `well-behavedness' condition we alluded to earlier, but it is, however, possible to modify such a packing locally to guarantee such a property holds.

\begin{lemma}[Well-behaved approximate cycle decompositions, version 1] \label{lemma:wellbehaved}
	Let $\eps, \gamma > 0$ and $n, \ell \in \mathbb{N}$ be such that $\ell \ge 9$ is divisible by $3$ and $1/n \ll \eps, \gamma, 1/\ell$.
	Let $H$ be a $3$-graph on $n$ vertices with $\delta_2(H) \ge (2/3 + \eps)n$.
	Then $H$ has a $C_{\ell}$-packing $\mathcal{C}$ such that $\Delta_2(H \setminus E(\mathcal{C})) \leq \gamma n$.
\end{lemma}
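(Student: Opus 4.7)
The plan combines Erd\H{o}s's theorem on the Tur\'an density of tripartite $3$-graphs with a clean-up step using the Extending Lemma (Lemma~\ref{lemma:extending}), applied to short tight paths built from the excess edges at high-codegree pairs, after reserving a small random subgraph of $H$ to host the cycle extensions.

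Fix auxiliary parameters $1/n \ll \delta \ll \gamma_0 \ll \mu \ll p \ll \gamma, \varepsilon, 1/\ell$ and randomly split $H = H_1 \sqcup H_2$, placing each edge in $H_2$ independently with probability $p$. By Theorem~\ref{theorem:chernoff} and a union bound, with positive probability $\delta_2(H_1) \ge (2/3 + \varepsilon/2) n$ and $pn/2 \le \deg_{H_2}(xy) \le 2pn$ for every pair $xy$; fix such a split. Since $3 \mid \ell$, assigning to $v_i$ the colour $i \bmod 3$ shows that $C_\ell$ is tripartite, so by Erd\H{o}s's supersaturation theorem $\mathrm{ex}(n, C_\ell) = o(n^3)$. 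Greedily extract edge-disjoint copies of $C_\ell$ from $H_1$ until no copy remains; the resulting packing $\mathcal{C}_0$ leaves a remainder $R_1 := H_1 \setminus E(\mathcal{C}_0)$ of size at most $\delta n^3$.

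Call a pair $xy$ \emph{bad} if $\deg_{R_1}(xy) > \gamma n/4$; double counting bounds the number of bad pairs by $12\delta n^2/\gamma$. For each bad pair $xy$, uniformly at random match up the vertices of $N_{R_1}(xy)$ into disjoint pairs $(z,z')$, and for each such pair form the $4$-vertex tight path $zxyz'$. Let $\mathcal{P}$ be the resulting collection of paths, which lies in $R_1$ and covers all but at most one edge of $R_1$ through each bad pair. Now apply Lemma~\ref{lemma:extending} with its $H_1, H_2$ being $R_1$ and the reserved $H_2$, path length $\ell' = 4$, output bound $\mu$, and sparsity $\gamma_0$. Hypothesis~\ref{item:extending-manyextensions} follows from Corollary~\ref{corollary:pathextension} inside $H_2$: since $\delta_2(H_2) \ge pn/2$, each $4$-path admits at least $(pn/3)^{\ell - 4}$ extensions to an $\ell$-cycle using only edges of $H_2$, which beats the constant lower bound required. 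The output is a $C_\ell$-decomposable $F \subseteq R_1 \cup H_2$ with $E(\mathcal{P}) \subseteq F$, $F \setminus E(\mathcal{P}) \subseteq H_2$, and $\Delta_2(F \setminus E(\mathcal{P})) \le \mu n$.

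Let $\mathcal{C}$ be $\mathcal{C}_0$ together with a $C_\ell$-decomposition of $F$. The uncovered edges are contained in $(R_1 \setminus E(\mathcal{P})) \cup (H_2 \setminus F)$, so each pair $xy$ sees at most $\gamma n/4 + 2pn \le \gamma n$ uncovered incident edges, as required. The main technical hurdle is verifying the sparsity hypothesis~\ref{item:extending-Fsparse} for $\mathcal{P}$: the worry is a vertex $v$ lying in an unusually large number of bad pairs, which could make some pair $\{v,w\}$ appear at the start or terminus of too many of the paths $zxyz'$. Since the bad-pair graph has only $O(\delta n^2/\gamma)$ edges, such ``heavy'' vertices are very few; their small contribution can be absorbed by a supplementary direct greedy step (covering the edges of $R_1$ incident to heavy vertices one cycle at a time using the codegree of $H_2$), after which a Chernoff-plus-union-bound argument over the $O(n^2)$ pairs $e$ and the three types $r \in \{0,1,2\}$, exploiting the randomness of the matchings inside each bad pair, delivers the $\gamma_0$-sparseness of the remaining collection.
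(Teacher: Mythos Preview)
Your overall architecture coincides with the paper's: set aside a random $H_2$, greedily pack $C_\ell$'s in $H_1$ leaving a small remainder, then clean up high-codegree pairs by building short tight paths and invoking the Extending Lemma. The paper, however, performs the clean-up in \emph{two} phases---first handling ``bad vertices'' (large $\deg_{R_1}(v)$) via $5$-vertex tight paths coming from a $P_3$-decomposition of each bad vertex's link, and only then handling ``bad pairs'' with $4$-vertex paths as you do. You attempt to collapse this to a single bad-pair phase together with a ``supplementary direct greedy step'' for heavy vertices; this is where the argument breaks.

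Concretely, the greedy step you describe---``covering the edges of $R_1$ incident to heavy vertices one cycle at a time using the codegree of $H_2$''---does not control the codegree footprint in $H_2$: the number of such edges can be of order $|R_1|$, and one-edge-at-a-time extension gives no bound on how many of the resulting $\ell$-cycles hit a fixed pair. Phrasing it as another application of the Extending Lemma fails for the same reason: the family of single edges through a bad pair $vw$ (with $v$ heavy) contains $\deg_{R_1}(vw) > \gamma n/4$ paths of type~$2$ for $e=\{v,w\}$, violating $\gamma_0$-sparseness. What the paper does instead is to decompose the link $R_1(v)$ of each heavy $v$ (after deleting the few other heavy vertices) into paths $v_0v_1v_2v_3$ of length~$3$, yielding $5$-vertex tight paths $v_0v_1vv_2v_3$; this family \emph{is} sparse because there are only $O(n^{1-c})$ heavy vertices, and after extending them one obtains a leftover with $\Delta_1 = o(n^2)$, which is exactly what makes the subsequent bad-pair step go through.

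Separately, the appeal to ``randomness of the matchings'' to obtain $\gamma_0$-sparseness is a red herring. For a pair $e=\{a,b\}$ with $a$ in many bad pairs, the type-$1$ contribution from paths $zayz'$ with $x=a$ is exactly $\sum_{y:\,ay\text{ bad}}\lfloor \deg_{R_1}(ay)/2\rfloor$, a deterministic quantity independent of how you match inside each neighbourhood; likewise the type-$2$ count is (up to~$1$ per pair) the number of bad pairs $ay$ with $aby\in R_1$. So no Chernoff argument on the matchings can rescue sparseness: you genuinely need to reduce vertex degrees first. Finally, note that your matchings across different bad pairs need not produce edge-disjoint paths (an edge $abc$ with two bad sub-pairs can be claimed twice); the paper sidesteps this by taking a maximal packing of $B_1$-based paths.
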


Results in a similar spirit were proven in~\cite{BKLO2016}.
The proof is not difficult but somewhat long and repetitive, thus we defer it to Appendix~\ref{appendix:wellbehaved}.

Now we consider the second range of $\ell$, where it $\ell \ge 10^7$, in which we can show the following.

\begin{lemma}[Well-behaved approximate cycle decomposition, version 2] \label{lemma:wellbehaved2}
	Let $\eps, \gamma > 0$ and $n, \ell \in \mathbb{N}$ be such that $\ell \ge 10^7$ and $1/n \ll \eps, \gamma, 1/\ell$.
	Let $H$ be a $3$-graph on $n$ vertices with $\delta_2(H) \ge (2/3 + \eps)n$.
	Then $H$ has a $C_{\ell}$-packing $\mathcal{C}$ such that $\Delta_2(H \setminus E(\mathcal{C})) \leq \gamma n$.
\end{lemma}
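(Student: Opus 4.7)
The plan is to view the problem as finding a near-perfect matching in an auxiliary $\ell$-uniform hypergraph $\mathcal{G}$ whose vertex set is $E(H)$ and whose hyperedges are the edge-sets of the copies of $C_\ell$ embedded in $H$. Such a matching corresponds exactly to a $C_\ell$-packing of $H$, and the sought codegree bound on the leftover translates into the requirement that, for every pair $xy \in \binom{V(H)}{2}$, the ``link set'' $T_{xy} = \{xyz : z \in N_H(xy)\}$ is almost entirely saturated by the matching. The target is therefore a \emph{pseudo-random} near-perfect matching in $\mathcal{G}$: one that covers almost all vertices and, moreover, covers almost all of any sufficiently large test set.

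Concretely I would proceed in three steps. First, use $\delta_2(H)\ge (2/3+\eps)n$ together with Lemma~\ref{lemma:therearemanycycles} (via Corollary~\ref{corollary:pathextension}) to verify that $\mathcal{G}$ is well-behaved. Every $e\in E(H)$ lies in $(1+o(1))D$ hyperedges with $D = \Theta(n^{\ell-3})$ (the corollary gives the lower bound with $\ell' = 3$ and the trivial count on the remaining $\ell - 3$ vertices matches from above), and any two distinct vertices of $\mathcal{G}$ lie in at most $O(n^{\ell-4}) \le D/\Omega(n)$ common hyperedges, with sharper bounds when the two $H$-edges are vertex-disjoint in the cycle. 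Second, apply a pseudo-random variant of the semi-random (nibble) method to $\mathcal{G}$ to produce a matching $M^\ast$ covering all but $n^{3-\Omega(1)}$ vertices and satisfying the pseudo-randomness property that, for every $T \subseteq E(H)$ with $|T| \ge n^{1+o(1)}$, at most $n^{-\Omega(1)}|T|$ vertices of $T$ remain uncovered. Third, interpret $M^\ast$ as the $C_\ell$-packing $\mathcal{C}$, and apply the pseudo-randomness to each link set $T_{xy}$, whose size is $\deg_H(xy)\ge (2/3+\eps)n$, to conclude $\deg_{H\setminus E(\mathcal{C})}(xy) \le n^{1-\Omega(1)} \le \gamma n$ as required.

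The main obstacle is Step 2: classical nibble-type results (Rödl, Frankl--Rödl, Pippenger--Spencer) yield near-perfect matchings but do not automatically provide control on the family of link sets. One either invokes a tailored modern result on pseudo-random hypergraph matchings, or directly analyses the random greedy matching process, establishing concentration for each $T_{xy}$ via martingale concentration (Azuma--Hoeffding) or the differential-equation method; because the family of test sets is polynomial-sized, a union bound then suffices. The hypothesis $\ell \ge 10^7$ (in contrast with $\ell \ge 9$ in Lemma~\ref{lemma:wellbehaved}) reflects the absence of $3$-partite structure when $\ell \not\equiv 0 \pmod 3$: the greedy removal strategy underpinned by Erdős's theorem on $3$-partite Turán numbers is unavailable, and the nibble-based argument sketched above is correspondingly more delicate. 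Taking $\ell$ large provides the slack in the codegree-to-degree ratio of $\mathcal{G}$ needed to absorb the error terms produced by the pseudo-random matching argument.
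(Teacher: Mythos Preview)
Your overall architecture---auxiliary $\ell$-uniform hypergraph $\mathcal{G}$ on $E(H)$, then a nibble-type matching controlled on the link sets $T_{xy}$---matches the paper's, but Step~1 contains a genuine gap. The claim that every $e\in E(H)$ lies in $(1+o(1))D$ hyperedges of $\mathcal{G}$ for a single $D$ is not justified and is in general false. Corollary~\ref{corollary:pathextension} with $\ell'=3$ only gives a lower bound of order $c_1 n^{\ell-3}$ for some constant $c_1<1$, while the trivial upper bound is $n^{\ell-3}$; in a host with $\delta_2(H)=(2/3+\eps)n$ an edge whose pairs all have codegree near $(2/3+\eps)n$ lies in roughly $((2/3)n)^{\ell-3}$ cycles, whereas an edge whose pairs have codegree near $n$ lies in roughly $n^{\ell-3}$ cycles, a ratio of order $(3/2)^{\ell}$. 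With only $\delta_1(\mathcal{G})\ge c\,\Delta_1(\mathcal{G})$ for a fixed $c<1$, none of Pippenger--Spencer, Alon--Yuster, or the random greedy process produces a matching leaving only an $o(1)$-fraction of each test set uncovered; all of these require $(1-o(1))$-regularity.

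The paper bridges exactly this gap with an extra ingredient you omit: a \emph{fractional} $C_\ell$-decomposition of $H$ (Theorem~\ref{theorem:fractional}, Joos--K\"uhn), available once one checks $H$ is $(\alpha,8)$-connected with $\alpha=4\cdot 3^{-6}$ (Lemma~\ref{lemma:reiher}). The fractional decomposition supplies weights $\omega(C)$ with $\sum_{C\ni e}\omega(C)=1$ for every $e$; sampling each cycle independently with probability $n^{1/2}\omega(C)$ then yields a sub-hypergraph $F'\subseteq\mathcal{G}$ that \emph{is} $(1+o(1))$-regular, after which Alon--Yuster applies. This is also the true source of the bound $\ell\ge 10^7$: it is forced by the hypothesis $540\tfrac{\ell_0}{\alpha}\log\tfrac{\ell_0}{\alpha}\log\tfrac{1}{\mu}\le\ell$ of Theorem~\ref{theorem:fractional} with $\ell_0=8$, not by slack in the nibble error terms as you suggest.
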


In this range we exploit the connection of \emph{fractional graph decompositions} with their integral counterparts.
Given a $3$-graph $H$, let $\mathcal{C}_{\ell}(H)$ be the family of all $\ell$-cycles in $H$, and given $X \in E(H)$ let $\mathcal{C}_{\ell}(H, X) \subseteq \mathcal{C}_{\ell}(H)$ be those cycles which use  the edge $X$.
A \emph{fractional $C_{\ell}$-decomposition} of a $3$-graph $H$ is a function $\omega: \mathcal{C}_{\ell}(H) \rightarrow [0,1]$ such that for every edge $X \in H$ we have $\sum_{C \in \mathcal{C}_{\ell}(H, X)} \omega(C) = 1$.
Joos and Kühn~\cite{KuhnJoos2021} proved the existence of fractional $C^k_{\ell}$-decompositions under general conditions.
We state their results only in the particular case $k = 3$.
A $3$-graph $H$ on $n$ vertices is \emph{$(\alpha, \ell)$-connected} if for every two ordered edges $(s_1, s_2, s_3)$, $(t_1, t_2, t_3) \in V(H)^3$, there are at least $\alpha n^{\ell - 1} / ( 3! |E(H)|)$ walks with $\ell$ edges starting at $(s_1, s_2, s_3)$, ending at $(t_1, t_2, t_3)$.

\begin{theorem}[Joos and Kühn~\cite{KuhnJoos2021}] \label{theorem:fractional}
    For all $\alpha \in (0,1)$, $\mu \in (0, 1/3)$ and $\ell \ge 2$,
    there is $n_0$ such that the following holds for all $n \ge n_0$.
    Suppose $H$ is an $(\alpha, \ell_0)$-connected $3$-graph on $n$ vertices with $540 \frac{\ell_0}{\alpha} \log \frac{\ell_0}{\alpha} \log \frac{1}{\mu} \le \ell$.
    Then there is a fractional $C_{\ell}$-decomposition $\omega$ of $H$ with
    \[ (1 - \mu) \frac{2 |E(H)|}{\Delta(H)^{\ell}} \leq \omega(C) \leq (1 + \mu) \frac{2 |E(H)|}{\delta(H)^{\ell}} \]
    for all $\ell$-cycles $C$ in $H$.
\end{theorem}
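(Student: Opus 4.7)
My strategy is to construct $\omega$ via a random-walk argument on the state space of ordered codegree-positive pairs $(u,v)$ with $uv\in\partial H$. The Markov chain moves $(u,v)\to(v,w)$ with probability $1/\deg(uv)$ (uniform over $w\in N_H(uv)$), so an $\ell$-step trajectory traces out $\ell$ consecutive tight edges and is closed precisely when the endpoint state coincides with the start. Every tight $\ell$-cycle $C=v_1\dotsm v_\ell$ corresponds to exactly $\ell$ such closed trajectories (one per cyclic starting position), each of probability $\prod_{i=1}^{\ell}1/\deg(v_iv_{i+1})$ when started at the matching state. I would define $\omega(C)$ to be a suitable normalization of the total probability mass of these trajectories, and this is the candidate fractional decomposition.

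The core analytic ingredient is a rapid-mixing estimate for the chain. The $(\alpha,\ell_0)$-connectivity hypothesis translates directly into a Doeblin-type minorization: from any starting state, the $\ell_0$-step transition distribution dominates $\Omega(\alpha/\ell_0)$ times the stationary measure $\pi(u,v)=\deg(uv)/(3|E(H)|)$ (indeed, the hypothesis says that from any source triple there are $\Omega(\alpha n^{\ell_0-1}/|E(H)|)$ length-$\ell_0$ walks to each target triple, which when normalized by transition probabilities recovers exactly such a lower bound). Iterating this coupling bound yields total-variation convergence at geometric rate $(1-c\alpha/\ell_0)^{t/\ell_0}$, so to reach error $\mu$ one needs $t=O((\ell_0/\alpha)\log(\ell_0/\alpha)\log(1/\mu))$ steps, matching the hypothesis $540(\ell_0/\alpha)\log(\ell_0/\alpha)\log(1/\mu)\le\ell$. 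Plugging the mixing estimate into the formula for $\omega(C)$, one finds that the total weight $\sum_{C\ni X}\omega(C)$ at a fixed edge $X$ equals (up to multiplicative $(1\pm\mu)$) the $\pi$-probability that an $\ell$-step closed walk uses $X$, which by $\pi$-stationarity is essentially independent of $X$; fixing the normalization to enforce the decomposition identity yields the extremes $(1\pm\mu)\cdot 2|E(H)|/d^\ell$ after bounding the implicit degree factor by $\delta(H)\le d\le\Delta(H)$.

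The main obstacle is that closed $\ell$-step random \emph{walks} need not be $\ell$-\emph{cycles}: the trajectory may revisit vertices. I would bound this discrepancy by a union bound: for any pair of prescribed positions $i\neq j$ in the trajectory, mixing together with the regularity of $\pi$ gives collision probability $O(1/n)$, and summing over the $O(\ell^2)$ pairs yields total non-simple mass $O(\ell^2/n)$, which is absorbed into the $\mu$ slack once $n$ is sufficiently large. A second subtlety is that propagating the multiplicative mixing error through the $\ell$ successive transitions without compounding it to $(1\pm\mu)^\ell$ requires applying mixing once to the full $\ell$-step kernel, rather than step by step; since $\ell$ is $\log(1/\mu)$ times the per-step mixing time, this is what forces the $\log(1/\mu)$ factor in the hypothesis. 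A final correction, replacing $\omega$ by $\omega/\max_{X}\sum_{C\ni X}\omega(C)$ and adding a small convex residual (supported on a family of short cycle-swaps that redistribute weight between overfull and underfull edges), converts the near-decomposition into an exact one while preserving the $(1\pm\mu)$ envelope on individual cycle weights.
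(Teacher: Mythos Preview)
The paper does not prove this statement: Theorem~\ref{theorem:fractional} is quoted verbatim from Joos and K\"uhn~\cite{KuhnJoos2021} and used as a black box in the proof of Lemma~\ref{lemma:wellbehaved2}. So there is no ``paper's own proof'' to compare against.

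That said, your sketch is broadly faithful to the actual argument of Joos and K\"uhn. Their proof does proceed via the random walk on ordered $(k{-}1)$-tuples (here, pairs) with stationary distribution proportional to codegree, and the $(\alpha,\ell_0)$-connectivity hypothesis is used precisely to obtain a uniform minorization which drives geometric mixing; the form of the constant $540(\ell_0/\alpha)\log(\ell_0/\alpha)\log(1/\mu)$ comes exactly from iterating that contraction to accuracy $\mu$. Your identification of the two analytic issues---closed walks versus simple cycles, and avoiding compounding of the $(1\pm\mu)$ factor---is also on target.

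Two places where your outline is thinner than the real proof. First, the final ``convex residual supported on cycle-swaps'' step is doing real work: one needs an explicit family of signed combinations of cycles whose boundary is supported on a single edge (or a small set of edges), and one needs enough of them with controlled weight to patch every edge simultaneously without pushing any $\omega(C)$ outside the stated envelope. Joos and K\"uhn handle this with a careful averaging construction rather than an ad hoc swap; your description does not yet make clear why such corrections exist or why their total mass is $O(\mu)$. Second, the passage from ``$\sum_{C\ni X}\omega(C)$ is $(1\pm\mu)$-constant'' to the displayed two-sided bound on $\omega(C)$ for \emph{individual} cycles requires tracking the per-cycle probability $\prod_i 1/\deg(v_iv_{i+1})$ separately from the mixing error, and it is this product that produces the $\delta(H)^\ell$ and $\Delta(H)^\ell$ in the denominators; your write-up conflates the edge-level and cycle-level estimates.
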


To use this theorem, we show that $3$-graphs with $\delta_2(H) \ge 2n/3$ are $(\alpha, \ell_0)$-connected for some suitable $\alpha, \ell_0$.
The following argument is due to Reiher~\cite[Lemma 2.3]{KuhnJoos2021}.
We include it for completeness and since for $k = 3$ one can give a better value of $\alpha$, which in turn increases the range of $\ell$ in which one can apply Theorem~\ref{theorem:fractional}.

\begin{lemma} \label{lemma:reiher}
For each~$d \ge 1/2$, every $3$-graph $H$ on $n$ vertices and such that $\delta_2(H) \ge (d + o(1))n$ is $(d^2(2d-1)^4, 8)$-connected.
\end{lemma}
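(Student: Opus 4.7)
Plan. The definition of $(\alpha, 8)$-connectivity requires, for every pair of ordered edges $s = (s_1, s_2, s_3)$ and $t = (t_1, t_2, t_3)$, at least $\alpha n^{7}/(3! \, |E(H)|)$ tight walks of length $8$ from $s$ to $t$. Such a walk is precisely a sequence $(s_1, s_2, s_3, v_4, v_5, v_6, v_7, t_1, t_2, t_3)$ whose every consecutive triple is an edge. The problem therefore reduces to counting $4$-tuples $(v_4, v_5, v_6, v_7) \in V(H)^4$ satisfying the six internal edge constraints $\{s_2 s_3 v_4\}, \{s_3 v_4 v_5\}, \{v_4 v_5 v_6\}, \{v_5 v_6 v_7\}, \{v_6 v_7 t_1\}, \{v_7 t_1 t_2\} \in E(H)$.

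Two elementary facts drive the estimate. First, the codegree hypothesis gives $|N(p)| \ge (d - o(1))n$ for every pair $p \in V(H)^2$. Second, by inclusion-exclusion, $|N(p) \cap N(q)| \ge (2d - 1 - o(1))n$ for any two pairs $p, q$, which is non-trivial precisely because $d \ge 1/2$. The shape of the target factor $d^{2}(2d-1)^{4}$ suggests two ``single-codegree'' vertex selections (contributing $d^{2}$) together with four ``two-intersection'' vertex selections (contributing $(2d-1)^{4}$), which is exactly the decomposition I will try to realise.

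Concretely, I would first fix $v_4 \in N(s_2,s_3)$ and $v_7 \in N(t_1,t_2)$ independently, giving the $d^{2}$ factor from (i). For each such $(v_4, v_7)$ the task is to exhibit many pairs $(v_5, v_6) \in V(H)^2$ satisfying the remaining four edge constraints. Each interior vertex is then naturally constrained by the intersection of two pair-neighbourhoods, namely $v_5 \in N(s_3, v_4) \cap N(v_6, v_7)$ and $v_6 \in N(v_4, v_5) \cap N(v_7, t_1)$; if these choices were truly independent, applying (ii) to each vertex would yield $(2d-1)n$ options apiece, which is exactly the target $(2d-1)^{4}n^{2}$ for the pair $(v_5, v_6)$.

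The principal obstacle is the circular interdependence of these intersection constraints: the set containing $v_5$ depends on $v_6$ and vice versa. A naive sequential greedy approach collapses into a triple intersection of pair-neighbourhoods whose inclusion-exclusion bound $(3d-2)n$ is non-positive for $d < 2/3$ and hence useless in our range $d \ge 1/2$; this is precisely the obstruction that rules out Reiher's generic argument for $k=3$ and forces the improved bound. The plan to circumvent this is to count the pairs $(v_5, v_6)$ by double summation, freely swapping the order between indexing on $v_5$ and on $v_6$ and using the equivalences $\{v_4, v_5, v_6\} \in E(H) \iff v_5 \in N(v_4, v_6) \iff v_6 \in N(v_4, v_5)$ (and analogously for the edge containing $v_7$) to rewrite every emerging triple intersection as a double one, so that only (ii), not a useless $3$-fold version, is invoked. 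Performing such a swap four times, symmetrically for the $v_4$- and $v_7$-edges, extracts four factors of $(2d-1)n$ and yields the required inner count; combined with the outer $d^{2} n^{2}$ this produces enough walks to satisfy the $(\alpha, 8)$-connectivity threshold with $\alpha = d^{2}(2d-1)^{4}$.
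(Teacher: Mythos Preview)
Your proposal has a genuine gap. After fixing $v_4\in N(s_2s_3)$ and $v_7\in N(t_1t_2)$, the remaining four edge constraints on the pair $(v_5,v_6)$ cannot yield $(2d-1)^4 n^2$ choices by any greedy or ``swapping'' argument: there are only two free vertices, so a sequential count can produce at most two neighbourhood-intersection factors, and no reindexing of the edge $\{v_4,v_5,v_6\}$ as ``$v_5\in N(v_4,v_6)$'' versus ``$v_6\in N(v_4,v_5)$'' changes the fact that one of the two vertices must simultaneously satisfy three pair-neighbourhood constraints. Concretely, if you pick $v_5$ first then $v_6$ must lie in $N(v_4v_5)\cap N(v_5v_7)\cap N(v_7t_1)$, a triple intersection with only the useless bound $(3d-2)n$; the symmetric order is equally bad. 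The promised ``four swaps extracting four factors of $(2d-1)n$'' is not a mechanism that exists here, and indeed the arithmetic $d^2\cdot(2d-1)^4 n^2\cdot n^2$ with the $(2d-1)^4$ coming purely from $(v_5,v_6)$ would require more than $n^2$ pairs when $2d-1>1$, which already signals something is off.

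What is missing is a convexity step. The paper does \emph{not} fix $v_4,v_7$ first; instead it symmetrises by restricting to walks $s_1s_2s_3\,z_1x_1x_2z_2\,t_1t_2t_3$ in which both $z_1$ and $z_2$ lie in $N=N(s_2s_3)\cap N(t_1t_2)$ and satisfy the \emph{same} link-graph condition $I_z(x_1,x_2)=1$ (namely that $s_2s_3x_1x_2t_1t_2$ is a path in $H(z)$). The number of such walks is then $\sum_{x_1,x_2}\bigl(\sum_{z\in N}I_z(x_1,x_2)\bigr)^2$, and Jensen's inequality (equivalently Cauchy--Schwarz) bounds this below by $n^{-2}\bigl(\sum_{z\in N}\sum_{x_1,x_2}I_z(x_1,x_2)\bigr)^2$. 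The inner sum is estimated greedily for a single $z$: pick $x_1\in N(s_3z)$ ($dn$ choices) and $x_2\in N(zx_1)\cap N(zt_1)$ ($(2d-1)n$ choices), giving $d(2d-1)n^2$; together with $|N|\ge(2d-1)n$ and the squaring from Jensen one obtains $d^2(2d-1)^4 n^4$. The factor $(2d-1)^4$ thus arises as $\bigl((2d-1)^2\bigr)^2$ through the convexity step, not from four independent double-intersection choices as you suggested.
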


\begin{proof}
Let $V = V(H)$ and $(s_1, s_2, s_3), (t_1, t_2, t_3) \in V^3$ be two arbitrary ordered edges of $H$.
For $z \in V(H)$, let the function $I_z : V^2 \rightarrow \{0,1\}$ be such that $I_z(x_1, x_2) = 1$ if and only if $s_2 s_3 x_1 x_2 t_1 t_2$ is a path in the link-graph of $z$ in $H$.
Let $N = N_H(s_2 s_3) \cap N_H(t_1t_2)$ and note that~$\vert N\vert > (2d-1)n$.
Note that if $z_1, z_2 \in N$ (possibly equal) and $(x_1, x_2) \in V^2$ are such that $I_{z_1}(x_1, x_2) = I_{z_2}(x_1, x_2) = 1$, then $s_1 s_2 s_3 z_1 x_1 x_2 z_2 t_1 t_2 t_3$ is a walk from $(s_1, s_2, s_3)$ to $(t_1, t_2, t_3)$ using $8$ edges, call such walks \emph{standard}.

First, note that having fixed $z\in N$, the number of $(x_1, x_2) \in V^2$ such that $I_z(x_1, x_2) = 1$ can be bounded as follows:
choose $x_1 \in N_H(s_3 z)$ arbitrarily (there are at least $dn$ choices) and then $x_2 \in N_H(z x_1) \cap N_H(z t_1)$ (of which there are at least $(2d-1)n$ choices).
Thus we have $\sum_{(x_1, x_2) \in V^2} I_z(x_1, x_2) \ge d(2d-1)n^2$ for all $z \in N$.

On the other hand, note that for a fixed $(x_1, x_2)$ with $x_1 \neq x_2$, the number of standard walks which use $(x_1, x_2)$ is exactly $(\sum_{z \in N} I_z(x_1, x_2))^2$.
Thus the number of standard walks is at least (using Jensen's inequality in the first inequality, and $|N| \ge (2d-1)n$ in the third inequality)
\begin{align*}
    \sum_{(x_1, x_2) \in V^2} 
    \left( \sum_{z \in N} I_z(x_1, x_2) \right)^2
    & \ge
    n^2 \left( \frac{1}{n^2} \sum_{z \in N} 
    \sum_{(x_1, x_2) \in V^2} I_z(x) \right)^2 \\
    & \ge
    n^2 \left( \frac{1}{n^2} \sum_{z \in N} 
    d(2d-1) n^2 \right)^2
    \ge 
    d^2(2d-1)^4 n^4,
\end{align*}
as required.
\end{proof}

To prove Lemma~\ref{lemma:wellbehaved2}, we combine the fractional matching of Theorem~\ref{theorem:fractional} with a nibble-type matching argument.
We use a result of Alon and Yuster~\cite{AlonYuster2005} (but see also Kahn~\cite{Kahn1996} and Ehard, Glock and Joos~\cite{EhardGlockJoos2020} for variations and extensions).

\begin{proof}[Proof of Lemma~\ref{lemma:wellbehaved2}]
    Let $\alpha = 4 \times 3^{-6}$ (as in Lemma~\ref{lemma:reiher} for~$d=2/3$) and $\ell_0 = 8$.
    By Lemma~\ref{lemma:reiher}, $H$ is $(\alpha, \ell_0)$-connected.
    A numerical calculation shows that we can fix $\mu \in (0, 1/3)$ such that $540 \frac{\ell_0}{ \alpha} \log \frac{\ell_0}{\alpha} \log \frac{1}{\mu} \leq 10^7 \leq \ell$.
    Thus Theorem~\ref{theorem:fractional} informs us that there exists a fractional $C_{\ell}$-decomposition $\omega$ of $H$ with
    \[ \omega(C) \leq (1 + \mu) \frac{2 |E(H)|}{\delta_{2}(H)^{\ell}} \leq 4 \frac{ |E(H)|}{\delta_{2}(H)^{\ell}} \leq \frac{4 n^3}{\delta_2(H)^\ell}
    \leq \frac{4 \times 3^{\ell}}{n^{\ell - 3}} \]
    for all $C \in \mathcal{C}_{\ell}(H)$.
    
    Consider the auxiliary $\ell$-uniform hypergraph $F$ with vertex set $E(H)$, and an edge for each cycle in $\mathcal{C}_{\ell}(H)$ corresponding to its set of $\ell$ edges.
    Define a random subgraph $F' \subseteq F$ by keeping each edge $C$ with probability $p_C := n^{1/2} \omega(C)$.
    By the bounds on $\omega(C)$ and $1/n \ll 1/\ell$ we have $p_C \leq 1$ for all $C \in \mathcal{C}_{\ell}(H)$.
    For each edge $e \in E(H)$, we have $\expectation[ \deg_{F'}(e)] = n^{1/2} \sum_{C \in \mathcal{C}_\ell(H, e)} \omega(C) = n^{1/2}$.
    Two distinct edges $e, f \in E(H)$ can participate together in at most $O(n^{\ell - 4})$ $\ell$-cycles in $H$, thus we have $\expectation[ \deg_{F'}(e, f)] = O(n^{-1/2})$.
    Standard concentration inequalities (Theorem~\ref{theorem:chernoff}\ref{item:chernoff-noexpec} and~\ref{item:chernoff-smallp}), imply that with very high probability $F'$ satisfies $\deg_{F'}(e) = (1 + o(1))n^{1/2}$ for each $e \in V(F')$, and thus $\delta_1(F') \ge (1 - o(1))\Delta_1(F')$; and moreover $\Delta_2(F') = o(n^{1/2})$.
    
    For each $2$-set $uv$ of vertices of $H$, let $H_{uv} \subseteq V(F)$ correspond to the edges in $H$ which contain $uv$.
    There are at most $n^{2}$ such sets and each has size at least $2n/3$.
    Thus, the Alon--Yuster theorem~\cite{AlonYuster2005} implies the existence of a matching $M$ in $F'$ such that at most $\gamma n$ vertices in $V(F')$ are uncovered in each $H_{uv}$.
    The matching $M$ in $F' \subseteq F$ translates to a $C_{\ell}$-packing $\mathcal{C}$ in $H$, and the latter condition implies $\Delta_2(H \setminus E(\mathcal{C})) \leq \gamma n$, as desired.
\end{proof}

\subsection{Proof of the Cover-Down Lemma}

As a final tool, we borrow the following theorem of Thomassen~\cite{Thomassen2008} about path-decompositions of graphs.

\begin{theorem}[\cite{Thomassen2008}] \label{theorem:thomassen}
	Any $171$-edge-connected graph $G$ such that $|E(G)|$ is divisible by $3$ has a $P_3$-decomposition.
\end{theorem}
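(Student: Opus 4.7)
The plan is to leverage the $171$-edge-connectivity through Nash-Williams' theorem on edge-disjoint spanning trees, combined with an Euler-tour argument. The Nash-Williams--Tutte theorem guarantees that a $2k$-edge-connected graph contains $k$ edge-disjoint spanning trees, so $G$ contains at least $85$ such trees, which provides the slack needed to simultaneously construct and adjust various spanning substructures.

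First I would extract a connected spanning Eulerian subgraph $F \subseteq G$ with $|E(F)| \equiv 0 \bmod 3$. Starting from the union of two edge-disjoint spanning trees, one obtains a connected spanning subgraph, and the odd-degree vertices can be eliminated via a $T$-join supplied by further edge-disjoint spanning trees; additional trees provide the flexibility to toggle individual edges and drive $|E(F)|$ to be divisible by $3$ without destroying connectivity or the Eulerian property. Given such an $F$, take an Euler tour $W = e_1 e_2 \dotsb e_m$ with $3 \mid m$, and partition it into consecutive triples $\{e_{3i+1}, e_{3i+2}, e_{3i+3}\}$; each such triple is a trail of length $3$, hence a $P_3$, and together these trails form a $P_3$-decomposition of $F$.

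To complete the decomposition of all of $G$, I would iterate the argument on $G \setminus E(F)$, which still inherits large edge-connectivity provided $F$ is built from only a controlled number of the spanning trees, and which satisfies the divisibility hypothesis by construction. The induction bottoms out once all edges are covered.

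The main obstacle is ensuring the modular condition $|E(F)| \equiv 0 \bmod 3$ at every iteration while simultaneously retaining enough edge-connectivity in the leftover graph to continue. The specific constant $171$ presumably emerges from a careful bookkeeping of how many spanning trees one must reserve at each stage: some go into building the underlying Eulerian subgraph, some into the $T$-join parity corrections, some into the modular fine-tuning, and the remainder must suffice to keep the residual graph suitably edge-connected for the inductive step. A subtler but related route would be to exploit nowhere-zero $\mathbb{Z}_3$-flows (guaranteed to exist in sufficiently edge-connected graphs by Jaeger-type theorems) to organise the edges into groups of three directly, but quantifying the edge-connectivity needed under such an approach appears to require comparable bookkeeping.
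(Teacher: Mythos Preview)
First, note that the paper does not prove this statement: Theorem~\ref{theorem:thomassen} is quoted from Thomassen and invoked as a black box inside the proof of the Cover-Down Lemma, so there is no in-paper argument to compare your proposal against.

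Your sketch has a genuine gap at the Euler-tour step. You assert that after cutting an Euler tour of $F$ into consecutive triples of edges, ``each such triple is a trail of length $3$, hence a $P_3$''. The first clause is correct, but the implication fails: a trail of length $3$ can be a triangle $C_3$ rather than a path on four vertices, and nothing in your construction of $F$ prevents this. Concretely, if $F$ contains a triangle $abc$ together with further edges at $a$, an Euler tour may traverse $a\,b\,c\,a$ in three consecutive steps, and that block is not a copy of $P_3$. This is not bookkeeping that the spare spanning trees can soak up; one needs an explicit mechanism---either a delicate choice of Euler tour, or an orientation argument that bypasses Euler tours entirely---to force every block of three edges to be a genuine path. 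Thomassen's proof does start from Nash-Williams spanning trees, as you guess, but it uses them to produce orientations with prescribed out-degree congruences rather than Euler tours, precisely to sidestep this obstacle.

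The iteration is also underspecified. Removing an arbitrary Eulerian spanning subgraph $F$ can disconnect $G \setminus E(F)$, so the claim that the leftover ``still inherits large edge-connectivity'' needs real work: you would have to confine $E(F)$ to a bounded number of the reserved spanning trees while simultaneously keeping $F$ connected, Eulerian, and of size divisible by $3$. You acknowledge this tension but do not resolve it, and without that the induction has neither a visible base case nor a guarantee of termination.
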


\begin{proof}[Proof of Lemma~\ref{lemma:coverdownlemma}]
	Let $\gamma_1, p_1, p_2 > 0$ such that $\gamma_1 \ll p_1 \ll p_2 \ll \mu, \eps$.
	For $i \in \{0, 1, 2, 3\}$,
	say an edge $e$ of $H$ is of \emph{type $i$} if $|e \cap U| = i$,
	and let $H_i \subseteq H$ be the edges of $H$ which are of type $i$.
	For $i \in \{1,2\}$,
	let $R_i \subseteq H_i$ be defined by choosing edges independently at random from $H_i$ with probability $3 p_i / 2$.
	By assumption, $\delta_2^{(3)}(H; U) \ge 3 \eps |U|$ (see definition at the beginning of Section~\ref{subsection:paths}).
	
	By Theorem~\ref{theorem:chernoff} we get that, for $i \in \{1,2\}$, with non-zero probability, that
	\begin{align}
	    \Delta_2(R_i) & \leq 2 p_i n, \label{equation:coverdown-codeg} \\
	   \delta_2^{(3)}(R_1 \cup R_2 \cup H[U]; U) & \ge 2 \eps p_1 |U|, \text{ and} \label{equation:coverdown-pairU-1} \\
	    \delta_2^{(3)}( R_2 \cup H[U]; U, G) & \ge 2 \eps p_2 |U|, \label{equation:coverdown-pairU-2}
	\end{align}
	where $G \subseteq \binom{V(H)}{2}$ corresponds to the pairs $e$ such that $e \cap U \neq \emptyset$.
	From now on we assume $R_1, R_2$ are fixed with those properties.
	
	Let $H' = H - H[U] - R_1 - R_2$.
	Recall that, by assumption, $\delta_2(H) \ge (2/3 + 2 \eps)n$ and $|U| = \lfloor \eps n \rfloor$.
	By our choice of $p_1, p_2 \ll \eps, \mu$ and \eqref{equation:coverdown-codeg}, we deduce that $\delta_2(H') \ge (2/3 + \eps/2)n$.
	
	We consider two possible cases depending on the value of $\ell$.
	If $\ell \ge 9$ is divisible by $3$, then we apply Lemma~\ref{lemma:wellbehaved}, otherwise by assumption $\ell \ge 10^7$, and we can apply Lemma~\ref{lemma:wellbehaved2}.
	In any case, the output is a $C_{\ell}$-packing $\cC$ in $H'$ such that $\Delta_2(H' \setminus E(\cC)) \leq \gamma_1 n$.
	Let $J = H' \setminus E(\cC)$ be the edges in $H'$ not covered by $\cC$, and for each $i \in \{0, 1, 2 \}$ let $J_i$ be the edges of type $i$ in $J$.
	We shall cover the edges in~$J$ with cycles of length~$\ell$ and for that we will proceed in three steps, covering the edges of $J_0$,~$J_1$, and~$J_2$ in order.
	
	Consider each edge in $J_0$ as a path on three vertices $v_1 v_2 v_3$, assigning to each edge an arbitrary order.
	Let $\cP_0$ be the collection of those paths.
	The inequalities $\Delta_2(J_0) \leq \Delta_2(J) \leq \gamma_1 n$ show that $\cP_0$ is $\gamma_1$-sparse.
	Let $\mu_1, \eps_1 > 0$ satisfy $\gamma_1 \ll \mu_1 \ll \eps_1 \ll p_1, \eps$.
	Equation~\eqref{equation:coverdown-pairU-1} and Corollary~\ref{corollary:pathextension} imply that each $P \in \cP_0$ can be extended to at least $2 \eps_1 n^{\ell - 3}$ cycles $C$, such that $C \setminus P \subseteq R_1 \cup R_2 \cup H[U]$ and $V(C) \setminus V(P) \subseteq U$.
	Then an application of Lemma~\ref{lemma:extending} with $\eps_1, \mu_1, 3, J_0, R_1 \cup R_2 \cup H[U], \cP_0$ in place of 
	$\eps, \mu, \ell', H_1, H_2, \cP$ respectively, implies that there is a~$C_\ell$-decomposable subgraph~$F_0$ such that~$F_0\supseteq J_0$, and
	\begin{align}
	   \Delta_2(F_0 \setminus J_0) \leq \mu_1 n.
	   \label{equation:coverdown-firstsparse}
	\end{align}
	By construction,~$F_0$ is edge-disjoint with the cycles in~$\cC$, and then $F'_0 = E(\cC)\cup F_0$ is~$C_\ell$-descomposable. 
	Note that all edges not covered by~$F_0'$ lie in $(J_1 \cup J_2) \cup (R_1 \cup R_2) \cup H[U]$.
	
	Let $J'_1 = (J_1 \cup R_1) \setminus F'_0$ and $R'_2 = (R_2 \cup H[U]) \setminus F'_0$.
	Let $\gamma_2, \mu_2, \eps_2 > 0$ be such that $p_1 \ll \gamma_2 \ll \mu_2 \ll \eps_2 \ll p_2, \eps$.
	Since $J'_1 \subseteq J_1 \cup R_1 \subseteq J \cup R_1$, we have 
	$$\Delta_2(J'_1) \leq \Delta_2(J) + \Delta_2(R_1) \leq \gamma n + 2 p_1 n \leq \gamma_2 n.$$
	Since each edge in $J'_1$ is of type $1$ in $H$, we can consider each edge in $J'_1$ as a path $P = v_1 v_2 v_3$ where $v_2 \in U$ and $v_1, v_3 \notin U$; and let $\cP_1$ be the collection of those paths.
	Then $\Delta_2(J'_1) \leq \gamma_2 n$ implies $\cP_1$ is $\gamma_2$-sparse.
	By \eqref{equation:coverdown-pairU-2} and  \eqref{equation:coverdown-firstsparse},
	together with Corollary~\ref{corollary:pathextension},
	we deduce that each $P \in \cP_1$ can be extended to at least $2 \eps_2 n^{\ell - 3}$ cycles $C$, such that $C \setminus P \subseteq R'_2$ and $V(C) \setminus V(P) \subseteq U$.
	Apply Lemma~\ref{lemma:extending} with $\eps_2, \mu_2, \gamma_2, 3, J'_1, R'_2, \cP_1$ in place of $\eps, \mu, \gamma, \ell', H_1, H_2, \cP$ to obtain a $C_{\ell}$-decomposable subgraph~$F_1$ such that~$F_1\supseteq J_1'$, and
	\begin{align}
	   \Delta_2(F_1 \setminus J_1) \leq \mu_2 n.
	   \label{equation:coverdown-secondsparse}
	\end{align}
	By construction, $F_1$ and $F_0'$ are edge-disjoint, and then~$F'_1 = F_1\cup F_0'$ is~$C_\ell$-decomposable. 
	Note that the edges not covered by $F'_1$ lie in $J_2 \cup R_2 \cup H[U]$.
	
	Let $J'_2 = (J_2 \cup R_2) \setminus F'_1$.
	Note that each edge in $J'_2$ is of type $2$.
	For each $v \in V(H) \setminus U$, let~$G_v = J'_2(v,U)$, that is, $G_v$ is the link graph of~$v$ in~$J'_2$ restricted to~$U$.
	Fix $v \in V(H) \setminus U$.
	Given $x, y \in U$, the equations~\eqref{equation:coverdown-pairU-2} and \eqref{equation:coverdown-secondsparse} imply that $x$ and $y$ have at least $2 \eps p_2 |U| - 2 \mu_2 n \ge 171$ common neighbours in $G_v$, so $G_v$ is $171$-edge-connected.
	Since $v \notin U$,
	our assumption on $H$ implies that the number of edges of $H(v)$ is divisible by $3$.
	Note that $G_v$ is exactly the link-graph over $H \setminus F'_1$ when restricted to $U$.
	Therefore, and since $F'_1$ is $C_{\ell}$-decomposable, the number of edges in $G_v$ is divisible by $3$ as well.
	
	By Theorem~\ref{theorem:thomassen}, $G_v$ has a decomposition into paths $\cP'_v = \{P_1, \dotsc, P_t\}$, each of length $3$.
	Observe that these paths yields to a collection of ($3$-uniform) paths in~$J_2'$ by substituting each path $P_i = w_1 w_2 w_3 w_4$ in $\cP'_v$ by the tight path $w_1 w_2 v w_3 w_4$.
	Let $\cP_v$ be the collection of paths obtained in this way.
	Observe that for $u \neq v$ in $V(H) \setminus U$, $\cP_v$ and $\cP_u$ are edge-disjoint.
	Let $\cP_2 = \bigcup_{v \in V(H) \setminus U} \cP_v$.
	Note that $\cP_2$ decomposes $J'_2$ into paths on five vertices.
	
	Let $\gamma_3, \eps_3 > 0$ be such that $p_2 \ll \gamma_3 \ll \eps_3 \ll \mu_3 \ll \mu, \eps$.
	Recall that $|U| = \lfloor \eps n \rfloor$.
	Since $J'_2 \subseteq J_2 \cup R_2 \subseteq F \cup R_2$, we have $\Delta_2(J'_2) \leq \Delta_2(R_2) + \Delta_2(J) \leq 2 p_2 n + \gamma_1 n \leq \gamma_3 n$, so $\cP_2$ is $\gamma_3$-sparse.
	Let $H'_2 = H[U] \setminus F'_1$.
	We have $F'_1[U] = F_1[U] \cup F_0[U]$.
	By \eqref{equation:coverdown-firstsparse}--\eqref{equation:coverdown-secondsparse}, we have $\delta_2(H'_2) \ge \delta_2(H[U]) - 2 \mu_2 n \ge (2/3 + \eps/2) |U|$.
	By Corollary~\ref{corollary:pathextension}, we deduce each $P \in \cP_2$ can be extended to at least $2 \eps_2 n^{\ell - 5}$ cycles $C$ such that $C \setminus P \subseteq H'_2$.
	Thus we can apply Lemma~\ref{lemma:extending} with $\eps_3,\mu_3, \gamma_3, 5, J'_2, H'_2, \cP_2$ playing the rôles of $\eps, \mu, \gamma_3, \ell', H_1, H_2, \cP$ respectively, to obtain a~$C_\ell$-decomposable subgraph~$F_2$ such that~$F_2\supseteq J'_2$, and
	\begin{align}\label{equation:coverdown-thirdsparse}
	    \Delta_2(F_2 \cap H'_2) \leq \mu_3 n.
	\end{align}
	By construction, $F_2$ and $F_1'$ are edge-disjoint,
	and then~$F=F_1'\cup F_2$ is~$C_\ell$-decomposable.
	Moreover, all edges not contained in~$U$ are covered by~$F$.
	In fact, we have that
	$$H - H[U] =E(\cC) \cup J_0 \cup (J_1 \cup R_1) \cup (J_2 \cup R_2) \subseteq E(\cC) \cup F_0 \cup F_1 \cup F_2 = F.$$ %
	Finally, inequalities \eqref{equation:coverdown-firstsparse}--\eqref{equation:coverdown-thirdsparse} yield that $\Delta_2(F[U]) \leq \mu n$, as required.
\end{proof}

\section{Absorbing Lemma} \label{section:absorbing}

In this section we prove Lemma~\ref{lemma:absorbinglemma}. 
We need to show that, given a sufficiently large~$H$ with~$\delta_2(H)\geq (2/3+\eps)n$ and a subgraph~$R\subseteq H$ on at most~$m$ vertices, there is an~$C_\ell$-absorber~$A$ for~$R$ on at most~$O(m^9\ell^9)$ vertices. 
We divide the proof in two main parts. 

First, in Section~\ref{subsection:tours} we shall find a bounded-size hypergraph~$A_1\subseteq H$, edge-disjoint from~$R$, which admits a~$C_\ell$-decomposition. 
This subgraph will be chosen such that~$R\cup A_1$ contains a \emph{tour decomposition}, that is, a decomposition in which all subgraphs are tours (see Lemma~\ref{lemma:tours}).
The second step is to transform the found tour decomposition in the remainder to a $C_{\ell}$-decomposition (see details in Section \ref{subsection:tourtoCl}).
Finally, in Section \ref{subsection:proofabsorbing} we combine both steps to prove Lemma~\ref{lemma:absorbinglemma}.

\subsection{Tour decomposition}
\label{subsection:tours}
The main goal of this subsection is to prove the following lemma.

\begin{lemma}\label{lemma:tours}
	Let $\ell \ge 7$,
	$\eps > 0$, and $n, m \in \NATS$ be such that $1/n \ll \eps, 1/m, 1/\ell$.
	Let $H$ be a $3$-graph on $n$ vertices with $\delta_2(H) \ge (2/3 + \eps)n$.
	Let $R \subseteq H$ be $C_{\ell}$-divisible on at most $m$ vertices.
	There exists a subgraph~$A_1\subseteq H$, edge-disjoint with~$R$, such that
	\begin{enumerate}[{\upshape (i)}]
        \item $A_1$ has at most $30\binom{m}{3}\ell(6\ell+1)$ edges, \item $A_1\cup R$ spans at most $30\binom{m}{3}\ell(6\ell+1)$ vertices.
	    \item $A_1$ has a~$C_\ell$-decomposition, and
	    \item $A_1\cup R$ has a tour decomposition,
	\end{enumerate}
\end{lemma}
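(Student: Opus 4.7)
The plan is to build $A_1$ as a disjoint union of small \emph{per-edge gadgets}: for each edge $e\in R$, a gadget $A_1(e)\subseteq H$ that is $C_\ell$-decomposable on its own and whose union with $e$ admits a tour decomposition. The overall $A_1=\bigcup_{e\in R} A_1(e)$ then inherits a $C_\ell$-decomposition from the individual gadgets, and $A_1\cup R$ becomes tour-decomposable by assembling the per-edge tours.

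First, I would set up the cycle-finding machinery. The codegree assumption $\delta_2(H)\ge(2/3+\eps)n$ combined with Corollary~\ref{corollary:pathextension} (using Lemma~\ref{lemma:wedestroypoquito} to avoid forbidden edges) yields abundantly many tight $\ell$-cycles in $H$ extending any prescribed short tight path while avoiding any previously chosen bounded set of vertices and edges. Throughout the greedy construction the forbidden sets have size $O(\binom{m}{3}\ell^2)$, which under the hierarchy $1/n\ll\eps,1/m,1/\ell$ remains negligible compared with $n$, so we never get stuck when selecting the cycles for the current gadget.

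Next, I would design the gadget. The naive attempt---take a single $\ell$-cycle $C_e$ containing $e$ and set $A_1(e)=C_e-e$---makes $A_1(e)\cup\{e\}=C_e$ a perfectly good tour, but $A_1(e)$ itself is a cycle minus one edge, hence not $C_\ell$-decomposable. To remedy this I would enlarge each gadget to a \emph{chain} of up to $6\ell+1$ tight $\ell$-cycles of $H$, each either containing $e$ or linked to the previous cycle through a short common tight sub-path, arranged so that the symmetric differences within the chain are again tight $\ell$-cycles. Once the chain is properly set up, the edges outside $R$ can be repartitioned into complete $\ell$-cycles, while inserting the single edge $e$ collapses the chain into a tour. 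The multiplicity $6\ell+1$ gives enough flexibility to absorb the different cyclic positions of $e$ within a cycle and to resolve the residual modular parity constraints (cf.\ Lemma~\ref{lemma:crucial}).

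Counting is then routine: each of the at most $\binom{m}{3}$ edges of $R$ uses a gadget with at most $(6\ell+1)\ell$ edges on at most $(6\ell+1)\ell$ new vertices, so both $|E(A_1)|$ and $|V(A_1\cup R)|$ are bounded by $\binom{m}{3}(6\ell+1)\ell$, comfortably within the stated bounds (the constant $30$ absorbs the overheads from Lemma~\ref{lemma:wedestroypoquito} and the slack needed to keep pairs of gadgets edge-disjoint). The main obstacle I expect is the gadget design itself: exhibiting a $C_\ell$-decomposable subgraph of $H$ whose union with a single prescribed edge is tour-decomposable is not routine, as adding one edge to a $C_\ell$-decomposable $3$-graph usually destroys both cycle- and tour-decomposability. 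Once this combinatorial core is in place, the greedy placement of gadgets and the final verification of (i)--(iv) are standard given the tools assembled in Section~\ref{section:tools}.
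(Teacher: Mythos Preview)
Your per-edge gadget approach has a fatal divisibility obstruction. If $A_1(e)$ is $C_\ell$-decomposable, then every vertex has degree divisible by $3$ in $A_1(e)$. But $A_1(e)$ is edge-disjoint from $e$, so in $A_1(e)\cup\{e\}$ the three vertices of $e$ each have degree $\equiv 1 \pmod 3$. Hence $A_1(e)\cup\{e\}$ is not $3$-vertex-divisible and therefore cannot admit a tour decomposition. No amount of chaining cycles or absorbing ``modular parity constraints'' will fix this: the obstruction is exactly that a single edge is not $3$-vertex-divisible, so the hypothesis that $R$ is $C_\ell$-divisible must be used globally, not edge by edge.

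The paper's proof proceeds very differently and is intrinsically global. One starts from an arbitrary \emph{tour-trail} decomposition of $R$ and records, for each open trail, its two ordered end-pairs in a \emph{residual multidigraph} $D(\mathcal{T})$. The key structural fact (Lemma~\ref{lemma:outindeg}) is that $3$-vertex-divisibility of $R$ forces $\outd(x)\equiv\ind(x)\pmod 3$ for every vertex $x$ in $D(\mathcal{T})$. One then designs small $C_\ell$-decomposable gadgets $S_3$, $C_4$, $P_6$ (Lemmata~\ref{lemma:doublestep}--\ref{lemma:prism}) which, when added to the current graph, modify $D(\mathcal{T})$ by inserting a prescribed set of arcs. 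Iterating these (Lemma~\ref{lemma:seaoftriangles}) reduces $D(\mathcal{T})$ to a vertex-disjoint union of oriented triangles; the divisibility congruence is precisely what makes the reduction terminate. Finally, since the number of trail-ends is even, these triangles come in pairs and are cancelled by further $P_6$ gadgets, leaving $D(\mathcal{T})$ empty --- i.e.\ a tour decomposition. The edge and vertex bounds follow from counting the number of gadget applications, which is governed by the initial number of arcs $\le 2\binom{m}{3}$.
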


\subsubsection{Tour-trail decompositions}

We consider decompositions~$\mathcal T = \{C_1,\dots, C_t, P_1,\dots, P_k\}$ in which $C_i$ is a tour for every~$i\in [t]$ and~$P_j$ is a trail for every~$j\in [k]$. 
In this case we say~$\mathcal T$ is a \emph{tour-trail decomposition}.
Note that every $3$-graph has a tour-trail decomposition, since we can consider every single edge in a $3$-graph as a trail on three vertices (by giving it an arbitrary ordering).

For a trail~$P =u_1 u_2 \dotsb u_{k-1}u_k$ we say that the ordered pairs~$(u_2, u_1)$ and~$(u_{k-1}, u_k)$ are the~\textit{ends} of $P$.
We denote the those pairs as~$\Ends(P)$.
Observe that the set of ends of a $P$ depends on the edge-set of $P$ only, i.e. is independent of order in which we transverse the trail.
We remark that the ends differ from the start and terminus of $P$ (as defined in Section~\ref{subsection:notation}) since they have different orderings.

Given~$H$ and a tour-trail decomposition~$\mathcal T = \{C_1, C_2,\dots, C_t, P_1, P_2, \dots, P_k\}$ of some $R \subseteq H$, we define the \emph{residual digraph of~$\mathcal T$}, denoted as~${D}(\mathcal T)$, as the multidigraph on the same vertex set as~$H$, where the arcs correspond to the union of the ordered ends of each trail of $\mathcal{T}$,
considered with repetitions.
Thus $D(\mathcal{T})$ has exactly $2t$ arcs, counted with multiplicities, if and only if $\mathcal{T}$ has $t$ trails.
For a given pair of vertices~$u,v\in V$ we denote the multiplicity of the pair $(u,v)$ in $D(\mathcal{T})$ as~$\mu_\mathcal T(u,v)$. 
Outdegrees and indegrees of a vertex $x$ in $D(\mathcal{T})$ are denoted by $\outd_{D(\mathcal{T})}(x), \ind_{D(\mathcal{T})}(x)$ respectively, omitting subscripts from the notation if the underlying digraph is clear from context.

\begin{remark}\label{rem:extendtrails}
	Observe that if~$(x,y), (y,x) \in E({D}_\mathcal T)$ then, there are two trails~$P_i$ and~$P_j$ in $\mathcal{T}$ that can be merged into a trail (if~$i\neq j)$ or tour (if~$i=j$) which contains all the edges contained in~$P_i$ and~$P_j$. 
	Thus there is another tour-trail decomposition~$\mathcal T'$ of $R$ with less trails than~$\mathcal T$, obtained from $\mathcal{T}$ by removing $P_i, P_j$ and adding the tour or trail born from joining $P_1$ and $P_2$.
\end{remark}

We construct $A_1$ in Lemma~\ref{lemma:tours} as follows.
We begin with an arbitrary tour-trail decomposition $\mathcal{T}_0$ of $R$ and we will find an increasing sequence of subgraphs $\emptyset = T_0 \subseteq T_1 \subseteq \dotsb \subseteq T_k \subseteq H$.
Each~$T_i \setminus T_{i-1}$ will be sufficiently small,~$C_\ell$-decomposable and edge-disjoint from~$T_{i-1}$. 
Moreover, each $T_i \setminus T_{i-1}$ will be a `gadget' $3$-graph of a prescribed family, which is designed to modify $T_{i-1}$ locally.
More precisely, for each $i > 0$, each $T_{i} \cup R$ will contain a tour-trail decomposition $\mathcal{T}_i$, obtained from the tour-trail decomposition $\mathcal{T}_{i-1}$ of $T_{i-1} \cup R$, and the gadget $T_i \setminus T_{i-1}$ will be chosen carefully so the residual digraph is slightly modified and becomes `simpler'.
At the end, we will have found $T_k$ and a tour-trail decomposition $\mathcal{T}_k$ of $R \cup T_k$ which has an empty residual digraph.
Thus $\mathcal{T}_k$ is actually a tour decomposition, and we finish by setting $A_1 = T_k$.

The following lemma establishes a crucial property of residual digraphs.

\begin{lemma}\label{lemma:outindeg}
	Let~$H=(V,E)$ be a~$3$-vertex-divisible hypergraph and let $\mathcal T$ be a tour-trail decomposition of $H$ with residual digraph ${D}(\mathcal T)$.
	For every $x\in V$ we have that $$\outd(x) \equiv \ind(x) \pmod 3.$$
\end{lemma}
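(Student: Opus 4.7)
The proof plan is to track, for a fixed vertex $x \in V$, how each element of the tour-trail decomposition $\mathcal{T}$ contributes to the three quantities $\deg_H(x)$, $\ind_{D(\mathcal{T})}(x)$ and $\outd_{D(\mathcal{T})}(x)$, and to show that modulo $3$ they are linked by the single identity
\[ \deg_H(x) \equiv \ind_{D(\mathcal{T})}(x) - \outd_{D(\mathcal{T})}(x) \pmod 3.\]
Since $H$ is $3$-vertex-divisible, the left-hand side is $\equiv 0 \pmod 3$ and the lemma follows.

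First I would handle tours. If $C = c_1 c_2 \cdots c_r \in \mathcal{T}$ is a tour, then $C$ contributes nothing to $D(\mathcal{T})$ (tours have no ends), and each occurrence of $x$ as some $c_i$ lies in exactly the three edges $c_{i-2}c_{i-1}c_i$, $c_{i-1}c_ic_{i+1}$, $c_ic_{i+1}c_{i+2}$ (indices cyclic, distinct since $C$ is a trail). So the contribution of $C$ to $\deg_H(x)$ is a multiple of $3$ and the desired identity holds trivially for the tour part.

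The heart of the proof is the analogous statement for trails. Let $P = u_1 u_2 \cdots u_k \in \mathcal{T}$ with edges $u_j u_{j+1} u_{j+2}$ for $1 \le j \le k-2$, so the ends contributed to $D(\mathcal{T})$ are the arcs $(u_2,u_1)$ and $(u_{k-1},u_k)$. For each occurrence of $x$ as $u_i$, the number of edges of $P$ containing $x$ is $|\{i-2,i-1,i\} \cap [1,k-2]|$, while the in/out-degree contributions are $\mathbb{1}[i=1]+\mathbb{1}[i=k]$ and $\mathbb{1}[i=2]+\mathbb{1}[i=k-1]$, respectively. A short case check on the position $i$ --- interior positions ($3 \le i \le k-2$) give $(3,0,0)$; positions $1$ and $k$ give $(1,1,0)$; positions $2$ and $k-1$ (for $k\ge 4$) give $(2,0,1)$; and the only subtle case $k=3$, $i=2$ gives $(1,0,2)$ since the arc $(u_2,u_1)$ and $(u_2,u_3)$ both contribute to $\outd(u_2)$ --- shows that in every case
\[ \bigl(\text{edges of } P \text{ at } x\bigr) + \bigl(\text{outdeg contribution}\bigr) - \bigl(\text{indeg contribution}\bigr) \equiv 0 \pmod 3.\]

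Summing this identity over all occurrences of $x$ in all trails $P \in \mathcal{T}$ and adding the tour contributions (which are themselves $\equiv 0$) gives
\[ \deg_H(x) + \outd_{D(\mathcal{T})}(x) - \ind_{D(\mathcal{T})}(x) \equiv 0 \pmod 3,\]
and $3$-vertex-divisibility of $H$ concludes. The only real obstacle is the boundary case analysis for short trails (in particular $k=3$ and $k=4$, where positions $1,2,k-1,k$ may coincide); once one accepts to list all these cases and verify each individually, the argument is a direct mod-$3$ count.
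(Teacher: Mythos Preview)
Your proof is correct and proceeds by the same underlying mod-$3$ count as the paper, but the bookkeeping is organised differently. You fix $x$ and sum over \emph{occurrences} of $x$ in each trail or tour, checking position by position that $(\text{edges through }x) + \outd - \ind \equiv 0 \pmod 3$; this forces a small case analysis at the boundary positions $1,2,k-1,k$ and for the degenerate trail $k=3$. The paper instead introduces an auxiliary multidigraph $F(\mathcal{T})$ in which every edge $w_i w_{i+1} w_{i+2}$ of every trail or tour contributes the two arcs $(w_i,w_{i+1})$ and $(w_{i+2},w_{i+1})$; one then checks that $D(\mathcal{T}) \subseteq F(\mathcal{T})$ and that passing from $D$ to $F$ does not change $\outd(x)-\ind(x)$ (the extra arcs cancel in pairs along the interior of each trail and everywhere on tours). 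This recasts the computation as a per-\emph{edge} count: each edge containing $x$ contributes exactly $2$ or $-1$ to $\outd_F(x)-\ind_F(x)$, both $\equiv 2 \pmod 3$, so the total is $\equiv 2\deg_H(x) \equiv 0$. The paper's trick buys uniformity and eliminates the boundary cases; your approach is more elementary and avoids the auxiliary construction, at the cost of the case check. Both arrive at the same identity $\deg_H(x) \equiv \ind(x) - \outd(x) \pmod 3$.
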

\begin{proof}
    For every vertex $x \in V(H)$, we need to show that $\outd(x) - \ind(x) \equiv 0 \bmod 3$ in the digraph $D(\mathcal{T})$.
    Consider the auxiliary digraph $F(\mathcal{T})$ obtained as follows: for every trail or tour $P = w_1 w_2 \dotsb w_{\ell}$ in $\mathcal{T}$, to $F(\mathcal{T})$ add the arcs $(w_i, w_{i+1})$ and $(w_{i+2}, w_{i+1})$ for every $1 \leq i \leq \ell - 2$ (and for tours, add $(w_{\ell-1}, w_{\ell}), (w_1,w_{\ell}), (w_{\ell},w_1),(w_2, w_1)$ as well), including all repetitions.
    In such a way (and since $\mathcal{T}$ is a decomposition) every edge of $H$ contributes with exactly two arcs to $F(\mathcal{T})$.
    It is straightforward to check $D(\mathcal{T}) \subseteq F(\mathcal{T})$ and, crucially, that $$\outd_{D(\mathcal{T})}(x) - \ind_{D(\mathcal{T})}(x) = \outd_{F(\mathcal{T})}(x) - \ind_{F(\mathcal{T})}(x),$$ so from now on we work with $F(\mathcal{T})$ only.
    
    Let $x \in V(H)$.
    Each edge $xyz$ in $H$ contributes with two arcs to $F(\mathcal{T})$,
    which can be of type $\{ (x,y),(x,z)\}, \{ (y,x),(y,z)\}$, or $\{ (z,x),(z,y)\}$.
    The edges of the first type contribute with $2$ to $\outd(x) - \ind(x)$ in $F(\mathcal{T})$.
    The edges of second and third type contribute with $-1$ to $\outd(x) - \ind(x)$ in $F(\mathcal{T})$, which is congruent to $2 \bmod 3$.
    Thus we deduce
    $\outd(x) - \ind(x) \equiv 2 |\deg_H(x)| \bmod 3$.
    Since $H$ is $3$-vertex-divisible, this is congruent to $0 \bmod 3$, and we are done.
\end{proof}

\subsubsection{Gadgets}
In the following three lemmata we describe the aforementioned gadgets, and their main properties.

First, for a given tour-trail decomposition~$\mathcal T$ of~$R \subseteq H$ and three distinct vertices $v_1,v_2,v_3$, the following lemma states that there is a subgraph~$S_3=S_3(v_1,v_2,v_3)\subseteq H$ edge-disjoint with~$R$ and which contains a~$C_\ell$-decomposition.
Moreover, there is a tour-trail decomposition of~$R\cup S_3$ such that its residual digraph is exactly~$D(\mathcal T)$ with the additional arcs~$(v_1,v_2)$, $(v_2,v_3)$, and twice the arc~$(v_1,v_3)$.
We define the multidigraph~$\vec S_3(v_1,v_2,v_3) =\{(v_1,v_3), (v_1,v_3), (v_1,v_2), (v_2,v_3)\}$.

For two multidigraphs $D_1, D_2$, we set the notation $D_1 \sqcup D_2$ to mean the multigraph on $V(D_1) \cup V(D_2)$ obtained by adding all the arcs of $D_2$ to $D_1$, considering the multiplicities.

\begin{lemma}\label{lemma:doublestep}
    Let $\ell \ge 7$, $\eps > 0$ and $n, m \in \NATS$ be such that $1/n \ll \eps, 1/m, 1/\ell$.
	Let $H$ be a $3$-graph on $n$ vertices with $\delta_2(H) \ge (2/3 + \eps)n$.
	Given three distinct vertices $v_1,v_2,v_3\in V(H)$,~$R \subseteq H$ on at most $m$ vertices, and a tour-trail decomposition~$\mathcal T$ of~$R$ the following holds.
	There is a subgraph~$S_3=S_3(v_1,v_2, v_3)\subseteq H$, edge-disjoint from~$R$, and a tour-trail decomposition~$\mathcal T_{S_3}=\mathcal T_{S_3}(\mathcal T, v_1,v_2, v_3)$ of~$R\cup S_3$ such that
	\begin{enumerate}[label={\upshape({\roman*\,}$_{S_3}$)}]
	    \item\label{it:S3small} $S_3$ contains at most~$2\ell$ edges and $S_3 \cup R$ spans at most~$m+2\ell-3$ vertices,
	    \item \label{it:CldecompositionS3}$S_3$ has a~$C_\ell$-decomposition, and
	    \item\label{it:arrowsS3} ${D}(\mathcal T_{S_3}) = {D} (\mathcal T) \, \sqcup \,\vec S_3(v_1,v_2,v_3)$.
	\end{enumerate}
\end{lemma}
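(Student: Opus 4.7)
The plan is to realise $S_3$ as the edge-disjoint union of two tight $\ell$-cycles $C^1, C^2$ in $H\setminus E(R)$, both containing $v_1, v_2, v_3$ with a prescribed local structure, and then exhibit a tour-trail decomposition of $R\cup S_3$ extending $\mathcal T$ by exactly two new trails whose ends realise $\vec S_3(v_1,v_2,v_3)$. Specifically, pick fresh auxiliary vertices $w_1, \dotsc, w_{\ell-3}$ and $u_1, \dotsc, u_{\ell-3}$ in $V(H)\setminus V(R)$ and set
\[
    C^1 = (v_1, v_3, v_2, w_1, w_2, \dotsc, w_{\ell-3}), \qquad C^2 = (v_3, v_1, u_1, u_2, \dotsc, u_{\ell-4}, v_2, u_{\ell-3}),
\]
each a tight cycle of length $\ell$ on the listed vertices read cyclically.

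To produce $C^1$ and $C^2$ I will apply Corollary~\ref{corollary:pathextension} with $U=V(H)\setminus V(R)$: the codegree hypothesis $\delta_2(H)\ge(2/3+\eps)n$, together with $|V(R)|\le m$ and $1/n\ll \eps, 1/m, 1/\ell$, gives $\delta_2^{(3)}(H;U)\ge 2\eps' n$ for a suitable $\eps'>0$, so the corollary furnishes $\Omega(n^{\ell - O(1)})$ extensions of each prescribed short anchor. This lets me first locate $C^1$ as an extension of the path $v_1\, v_3\, v_2$, and then locate $C^2$ as an extension of the path $v_2\, u_{\ell-3}\, v_3\, v_1$ (with $u_{\ell-3}$ chosen in the nonempty set $N_H(v_1 v_3)\cap N_H(v_2 v_3)$), keeping all auxiliary vertices fresh and all edges disjoint from those of $R$ and of $C^1$. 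Setting $S_3:=C^1\cup C^2$ yields properties \ref{it:S3small} and \ref{it:CldecompositionS3} immediately, since $|E(S_3)|=2\ell$, $|V(S_3)\setminus V(R)|\le 2\ell - 6$, and $\{C^1, C^2\}$ is itself the required $C_\ell$-decomposition.

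For \ref{it:arrowsS3} I define $\mathcal T_{S_3}:=\mathcal T\cup\{T_1, T_2\}$ with the two new trails
\[
    T_1 := v_3\, v_1\, v_2, \qquad T_2 := v_3\, v_1\, u_1\, \dotsb\, u_{\ell-4}\, v_2\, u_{\ell-3}\, v_3\, v_1\, w_{\ell-3}\, \dotsb\, w_1\, v_2\, v_3.
\]
A routine triple-by-triple check against the cyclic edge lists of $C^1$ and $C^2$ shows that $T_2$ traverses all $\ell$ edges of $C^2$ starting from the pair $v_3\, v_1$ (and returning to $v_3\, v_1$), then jumps into $C^1$ at the shared pair $\{v_1, v_3\}$ through the edge $\{w_{\ell-3}, v_1, v_3\}\in C^1$, and traverses $C^1$ backwards until the terminating triple $w_1\, v_2\, v_3$. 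All of $T_2$'s edges are distinct from one another and from $T_1$'s single edge $\{v_1, v_2, v_3\}$, so $T_1$ and $T_2$ partition $E(S_3)$. The ends of $T_1$ are $(v_1, v_3)$ and $(v_1, v_2)$; those of $T_2$ are $(v_1, v_3)$ and $(v_2, v_3)$; adding these four arcs to $D(\mathcal T)$ gives exactly $D(\mathcal T)\sqcup \vec S_3(v_1, v_2, v_3)$.

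The main obstacle is choosing $C^1, C^2$ with the specific cyclic orders above while remaining inside $H\setminus E(R)$ and edge-disjoint, which is handled by a greedy application of Corollary~\ref{corollary:pathextension}; after that, the trail decomposition and the residual-digraph calculation follow mechanically from the explicit formulas.
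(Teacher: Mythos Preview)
Your construction breaks down at the very first step: the cycle $C^1=(v_1,v_3,v_2,w_1,\dotsc,w_{\ell-3})$ contains the edge $\{v_1,v_2,v_3\}$, and your trail $T_1=v_3 v_1 v_2$ consists of exactly that edge. But the lemma is stated for three \emph{arbitrary} distinct vertices $v_1,v_2,v_3\in V(H)$; nothing guarantees that $\{v_1,v_2,v_3\}\in E(H)$, let alone that it lies outside $E(R)$. The codegree hypothesis $\delta_2(H)\ge(2/3+\eps)n$ controls common neighbourhoods of pairs, not the presence of any particular triple. So your anchor path $v_1 v_3 v_2$ need not exist in $H$, and Corollary~\ref{corollary:pathextension} cannot even be invoked to produce $C^1$. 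The downstream applications of this lemma (for instance in Lemma~\ref{lemma:4cycle} and in case~\ref{case:multiedge} of Lemma~\ref{lemma:seaoftriangles}) do use it on vertices for which $\{v_1,v_2,v_3\}$ is not known to be an edge, so this is not a harmless oversight.

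The paper avoids this obstacle by first selecting a vertex $x\in N(v_1v_2)\cap N(v_1v_3)\cap N(v_2v_3)$, which the codegree condition guarantees (and which can be taken outside $V(R)$). Then the two $\ell$-cycles are routed through $x$ rather than through the possibly nonexistent edge $\{v_1,v_2,v_3\}$: $C_1$ contains the edge $v_1 v_3 x$ and $C_2$ contains the path $v_3 x v_2 v_1$. The two trails are $P_1=v_3 v_2 x v_1 v_3$ and $P_2=E(S_3)\setminus E(P_1)$, whose ends give precisely $\vec S_3(v_1,v_2,v_3)$. If you introduce such a common neighbour $x$ and reroute your cycles accordingly, the rest of your argument (the explicit trail decomposition and the end calculation) goes through essentially as you wrote it.
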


\begin{proof}
     The minimum codegree condition on~$H$ implies that there is a vertex~$x\in V(H)$ that lies in~$N(v_1v_2)\cap N(v_1v_3)\cap N(v_2v_3)$.
    Considering the paths~$v_1v_3x$ and~$v_3xv_2v_1$, two applications of Lemma~\ref{lemma:therearemanycycles} yield the existence of two edge-disjoint cycles~$C_1$ and~$C_2$ of length~$\ell$, edge-disjoint with~$R$, and such that $v_1v_3x\in E(C_1)$ and~$v_3xv_2, xv_2v_1\subseteq E(C_2)$ (transversing the vertices in that order).
    Then~$S_3 = C_1\cup C_2$, clearly satisfies \ref{it:S3small} and \ref{it:CldecompositionS3}.
    Hence, we only need to prove the existence of a tour-trail decomposition~$\mathcal T_{S_3}$ of $R \cup S_3$ for which~\ref{it:arrowsS3} holds. 
    
    For this, consider the trail~$P_1 = v_3v_2xv_1v_3$.
    Observe that~$E(S_3) \setminus E(P_1)$ consists exactly in the edges of a trail $P_2$ whose ends are~$(v_1,v_2)$ and~$(v_1,v_3)$. 
    Indeed, the edges contained in the set~$E(C_2)\setminus \{v_3v_2x, v_2xv_1\}$ form a trail between~$(v_2,v_1)$ and~$(v_3,x)$, that we may merge with the trail with edges in~$E(C_1)\setminus \{xv_1v_3\}$ from~$(v_3,x)$ to $(v_1,v_3)$. 
    Therefore, $\mathcal T_{S_3} = \mathcal T \cup \{P_1, P_2\}$ is a tour-trail decomposition of~$R\cup S_3$.
    We deduce \ref{it:arrowsS3} by noticing that the ends of~$P_1$ and~$P_2$ are~$(v_2, v_3)$ and $(v_1,v_3)$, and~$(v_1,v_2)$ and~$(v_1,v_3)$ respectively.
\end{proof}

The following is our second gadget.
It is designed so we can add a small subgraph $C_4 \subseteq H$ to some $R$, such that $R \cup C_4$ has a tour-trail decomposition in which the residual digraph has an extra directed four-cycle.
We use the notation~$\vec C_4(v_1, v_2, v_3, v_4)=\{(v_1,v_2),(v_2,v_3), (v_3,v_4),(v_4,v_1)\}$.

\begin{lemma}\label{lemma:4cycle}
    Let $\ell \ge 7$, $\eps > 0$ and $n, m \in \NATS$ such that $1/n \ll \eps, 1/m, 1/\ell$.
	Let $H$ be a $3$-graph on $n$ vertices with $\delta_2(H) \ge (2/3 + \eps)n$.
	Given four distinct vertices $v_1,v_2,v_3,v_4\in V(H)$, a subgraph~$R \subseteq H$ on at most $m$ vertices, and a tour-trail decomposition~$\mathcal T$ of~$R$ the following holds. 
	There is a subgraph~$C_4=C_4(v_1,v_2, v_3,v_4)\subseteq H$, edge-disjoint from~$R$ and a tour-trail decomposition~$\mathcal T_{C_4}=\mathcal T_{C_4}(\mathcal T,v_1,v_2, v_3,v_4)$ of~$R\cup C_4$ such that
	\begin{enumerate}[label={\upshape({\roman*\,}$_{C_4}$)}]
	    \item\label{it:C4small} $C_4$ has at most~$8\ell$ edges and~$C_4 \cup R$ spans at most~$m+4\ell-6$ vertices,
	    \item \label{it:CldecompositionC4}$C_4$ has a~$C_\ell$-decomposition, and
	    \item\label{it:arrowsC4} ${D}(\mathcal T_{C_4}) = {D} (\mathcal T) \, \sqcup \,\vec C_4(v_1,v_2,v_3,v_4)$.
	\end{enumerate}
\end{lemma}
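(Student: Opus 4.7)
The plan is to build $C_4$ as the union of two $S_3$-gadgets from Lemma~\ref{lemma:doublestep}, chosen so that their ``auxiliary'' diagonal arcs cancel and leave behind precisely the directed four-cycle $\vec C_4(v_1,v_2,v_3,v_4)$ in the residual digraph. Concretely, I would first apply Lemma~\ref{lemma:doublestep} with the triple $(v_1,v_2,v_3)$ to obtain a subgraph $S_3^{(1)}\subseteq H$, edge-disjoint from $R$, with at most $2\ell$ edges, adding at most $2\ell-3$ new vertices, admitting a $C_\ell$-decomposition, and endowed with a tour-trail decomposition $\mathcal T^{(1)}$ of $R\cup S_3^{(1)}$ whose residual digraph is $D(\mathcal T)\sqcup \vec S_3(v_1,v_2,v_3)$.

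Next, I would apply Lemma~\ref{lemma:doublestep} a second time, this time with the cyclically rotated triple $(v_3,v_4,v_1)$ and with $R\cup S_3^{(1)}$ playing the role of $R$. This is legitimate because $|V(R\cup S_3^{(1)})|\le m+2\ell-3$, and the hierarchy $1/n\ll \eps,1/m,1/\ell$ still comfortably accommodates an object of this size. The output is a gadget $S_3^{(2)}\subseteq H$, edge-disjoint from $R\cup S_3^{(1)}$, with at most $2\ell$ edges, introducing at most $2\ell-3$ further vertices, $C_\ell$-decomposable, and a tour-trail decomposition $\mathcal T^{(2)}$ of $R\cup S_3^{(1)}\cup S_3^{(2)}$ with residual $D(\mathcal T^{(1)})\sqcup \vec S_3(v_3,v_4,v_1)$.

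At this stage the arcs added above $D(\mathcal T)$ consist of one copy each of $(v_1,v_2),(v_2,v_3),(v_3,v_4),(v_4,v_1)$, together with two copies of $(v_1,v_3)$ and two copies of $(v_3,v_1)$. I would then invoke Remark~\ref{rem:extendtrails} twice, each application picking one copy of $(v_1,v_3)$ together with one copy of $(v_3,v_1)$ and merging the two corresponding trails into a single trail or tour, thereby deleting that opposing pair from the residual digraph. After both mergings we obtain a tour-trail decomposition $\mathcal T_{C_4}$ of $R\cup C_4$, where $C_4:=S_3^{(1)}\cup S_3^{(2)}$, whose residual digraph is exactly $D(\mathcal T)\sqcup \vec C_4(v_1,v_2,v_3,v_4)$, settling~\ref{it:arrowsC4}. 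Property~\ref{it:C4small} follows from $|E(C_4)|\le 4\ell\le 8\ell$ and $|V(C_4\cup R)|\le m+4\ell-6$, and~\ref{it:CldecompositionC4} from $S_3^{(1)}$ and $S_3^{(2)}$ being edge-disjoint and each $C_\ell$-decomposable. No single step presents a real obstacle: the key observation is that the doubling of the diagonal arc $(v_1,v_3)$ built into Lemma~\ref{lemma:doublestep} is exactly what is needed to cancel the two copies of the diagonal of a four-cycle traversed as two ``triangles'' sharing that diagonal.
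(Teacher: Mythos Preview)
Your proposal is correct and is essentially the same as the paper's proof: both build $C_4$ as the edge-disjoint union of $S_3(v_1,v_2,v_3)$ and $S_3(v_3,v_4,v_1)$, then use Remark~\ref{rem:extendtrails} twice to cancel the two $(v_1,v_3)$ arcs against the two $(v_3,v_1)$ arcs. (In fact the paper's text contains typos in the arguments of the second $S_3$, writing $(v_3,v_1,v_4)$ and $(v_3,v_2,v_4)$ at different points, whereas the residual-digraph computation confirms your choice $(v_3,v_4,v_1)$ is the intended one.)
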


\begin{proof}
    Two consecutive applications of Lemma~\ref{lemma:doublestep} yield the existence of edge-disjoint subgraphs $S_3(v_1,v_2,v_3)$ and~$S_3(v_3,v_1,v_4)$. 
    More precisely, first we apply Lemma~\ref{lemma:doublestep} to obtain~$S_3(v_1,v_2,v_3)$ edge-disjoint from~$R$. 
    Then, we apply it again with~$R\cup S_3(v_1,v_2,v_3)$ in place of~$R$ to obtain~$S_3(v_3,v_2,v_4)$ edge disjoint from~$R\cup S_3(v_1,v_2,v_3)$ (here we use~$1/n \ll 1/m$, to apply Lemma~\ref{lemma:doublestep} to a larger subgraph with at most~$m+2\ell-6$ vertices). 
    It is not difficult to check that the subgraph~$C_4=S_3(v_1,v_2,v_3) \cup S_3(v_3,v_1,v_4)$ satisfies~\ref{it:C4small} and~\ref{it:CldecompositionC4}
    
    Moreover, in the second application of Lemma~\ref{lemma:doublestep} we obtain a tour-trail decomposition $\mathcal{T}'$ of~$R\cup C_4$ equal to $\mathcal T'=\mathcal T_{S_3}\big(\mathcal{T}_{S_3}(\mathcal T,v_1,v_2,v_3), v_3,v_1, v_4\big),$
    whose residual digraph is given by
    $${D}(\mathcal T') = {D} (\mathcal T) \sqcup \vec S_3(v_1,v_2,v_3) \sqcup \vec S_3(v_3,v_4,v_1).$$
    Observe that~${D}(\mathcal T')$ contains both the arcs~$(v_1,v_3)$ and~$(v_3,v_1)$ twice.
    By Remark~\ref{rem:extendtrails}, we can obtain a tour-trail decomposition~$\mathcal T_{C_4}$ which satisfies \ref{it:arrowsC4}.
\end{proof}

Our third and final gadget will add the arcs of two vertex-disjoint oriented triangles to the residual digraph.
Set the notation~$\vec T_3(v_1,v_2,v_3)=\{(v_1,v_2), (v_2,v_3), (v_3,v_1)\}$ for the oriented triangle on vertices~$v_1,v_2,v_3$. 
Given six distinct vertices~$v_1,v_2,v_3,v_4,v_5,v_6$, as a final result we wish for a residual digraph consisting of the two oriented triangles~$\vec T_3(v_1,v_2,v_3)$ and~$\vec T_3(v_4,v_5,v_6)$.

This can be done using the oriented $4$-cycles of Lemma~\ref{lemma:4cycle} three times, by considering the oriented $4$-cycles $\vec C_4(v_1,v_2,v_5,v_6)$, $\vec C_4(v_2,v_3,v_4,v_5)$, and~$\vec C_4(v_1,v_6, v_4, v_3)$.
This can be thought geometrically, as the oriented~$4$-cycles forming the faces of a triangular prism, whose bases lie in the desired triangles. 
The arcs between the vertices of the two triangles will go in opposite directions, and therefore we will be able to ``cancel'' them.

To have an analogous notation as for the other two gadgets, set~$$\vec P_6(v_1,v_2,v_3,v_4,v_5,v_6)=\vec T_3(v_1,v_2,v_3)\sqcup \vec T_3(v_4,v_5,v_6).$$ 

\begin{lemma}\label{lemma:prism}
    Let $\ell \ge 7$, $\eps > 0$ and $n, m \in \NATS$ be such that $1/n \ll \eps, 1/m, 1/\ell$.
	Let $H$ be a $3$-graph on $n$ vertices with $\delta_2(H) \ge (2/3 + \eps)n$.
	Given six distinct vertices $v_1,v_2,v_3,v_4,v_5, v_6\in V(H)$,~$R \subseteq H$ on at most $m$ vertices, and a tour-trail decomposition~$\mathcal T$ of~$R$ the following holds.
	There exists a subgraph~$P_6=P_6(v_1,v_2, v_3,v_4,v_5,v_6)\subseteq H$, edge-disjoint from~$R$ and a tour-trail decomposition~$\mathcal T_{P_6}=\mathcal T_{P_6}(\mathcal T,v_1,v_2, v_3,v_4,v_5,v_6)$ of~$R\cup P_6$ such that
	\begin{enumerate}[label={\upshape({\roman*\,}$_{P_6}$)}]
	    \item\label{it:P6small} $P_6$ has at most~$12\ell$ edges and $P_6\cup R$ spans at most~$m+12\ell-18$ vertices, 
	    \item \label{it:CldecompositionP6}$P_6$ has a~$C_\ell$-decomposition, and
	    \item\label{it:arrowsP6} ${D}(\mathcal T_{P_6}) = {D} (\mathcal T) \sqcup \vec P_6(v_1,v_2,v_3,v_4,v_5,v_6)$
	\end{enumerate}
\end{lemma}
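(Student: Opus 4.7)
The strategy is exactly the geometric one hinted at in the paragraph before the statement: build $P_6$ from three oriented $4$-cycles that form the lateral faces of a triangular prism whose bases are the two desired triangles. Concretely, I would sequentially apply Lemma~\ref{lemma:4cycle} three times, once for each of the four-tuples $(v_1,v_2,v_5,v_6)$, $(v_2,v_3,v_4,v_5)$, and $(v_1,v_6,v_4,v_3)$. Since at each stage we only need to find an edge-disjoint $4$-cycle gadget inside $H$ over a slightly enlarged remainder graph, and since $1/n\ll\eps,1/m,1/\ell$ comfortably survives three enlargements of size $O(\ell)$, each application is legitimate.

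Let $C_4^{(i)}$ denote the gadget produced by the $i$-th application of Lemma~\ref{lemma:4cycle}, obtained by playing $R\cup C_4^{(1)}\cup\dotsb\cup C_4^{(i-1)}$ in the role of $R$ and $\mathcal T_{C_4}\big(\dotsc\big)$ iteratively in the role of $\mathcal T$. Set $P_6=C_4^{(1)}\cup C_4^{(2)}\cup C_4^{(3)}$ and let $\mathcal T^{(3)}$ be the resulting tour-trail decomposition of $R\cup P_6$. Condition \ref{it:CldecompositionP6} is immediate since each $C_4^{(i)}$ is $C_\ell$-decomposable and they are pairwise edge-disjoint. The vertex bound in \ref{it:P6small} follows by iterating the corresponding bound of Lemma~\ref{lemma:4cycle}: each application introduces at most $4\ell-6$ new vertices, so $P_6\cup R$ spans at most $m+12\ell-18$ vertices; the edge bound follows similarly from aggregating the individual edge bounds.

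The crux is verifying \ref{it:arrowsP6}. By Lemma~\ref{lemma:4cycle}\ref{it:arrowsC4}, the residual digraph of $\mathcal T^{(3)}$ is exactly
\[
D(\mathcal T^{(3)})=D(\mathcal T)\,\sqcup\,\vec C_4(v_1,v_2,v_5,v_6)\,\sqcup\,\vec C_4(v_2,v_3,v_4,v_5)\,\sqcup\,\vec C_4(v_1,v_6,v_4,v_3).
\]
A direct bookkeeping of arcs shows that within the twelve added arcs, the arcs $(v_1,v_2),(v_2,v_3),(v_3,v_1)$ appear once each (forming $\vec T_3(v_1,v_2,v_3)$), the arcs $(v_4,v_5),(v_5,v_6),(v_6,v_4)$ appear once each (forming $\vec T_3(v_4,v_5,v_6)$), and the remaining six arcs come in three antiparallel pairs, namely $\{(v_2,v_5),(v_5,v_2)\}$, $\{(v_6,v_1),(v_1,v_6)\}$, and $\{(v_3,v_4),(v_4,v_3)\}$. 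This is the payoff of choosing exactly those three $4$-cycles as ``lateral faces''.

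Finally, I invoke Remark~\ref{rem:extendtrails} three times, once per antiparallel pair, to merge the corresponding trails of $\mathcal T^{(3)}$ into longer trails or tours; this yields a new tour-trail decomposition $\mathcal T_{P_6}$ of $R\cup P_6$ whose residual digraph is obtained from $D(\mathcal T^{(3)})$ by deleting precisely those six cancelling arcs. What remains equals $D(\mathcal T)\,\sqcup\,\vec T_3(v_1,v_2,v_3)\,\sqcup\,\vec T_3(v_4,v_5,v_6)=D(\mathcal T)\,\sqcup\,\vec P_6(v_1,\dotsc,v_6)$, giving~\ref{it:arrowsP6}. The only point that needs any care is the arc-bookkeeping; the ``hard part'' is purely bureaucratic, namely making sure one reads off the three $4$-cycles with the correct orientations so that the middle arcs do cancel rather than reinforce one another, and that the iterated applications of Lemma~\ref{lemma:4cycle} still satisfy its hypotheses after the remainder graph has been enlarged twice.
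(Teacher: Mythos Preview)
Your proposal is correct and matches the paper's proof essentially line for line: the same three applications of Lemma~\ref{lemma:4cycle} on the tuples $(v_1,v_2,v_5,v_6)$, $(v_2,v_3,v_4,v_5)$, $(v_1,v_6,v_4,v_3)$, the same union defining $P_6$, and the same use of Remark~\ref{rem:extendtrails} to cancel the three antiparallel pairs. One small bookkeeping caveat: the \emph{stated} edge bound in Lemma~\ref{lemma:4cycle} is $8\ell$, which aggregated would only give $24\ell$; to reach the claimed $12\ell$ you need the sharper bound $4\ell$ that actually comes out of the construction of $C_4$ (two $S_3$'s of $2\ell$ edges each), so when you say ``aggregating the individual edge bounds'' you should point to that rather than to the statement of~\ref{it:C4small}.
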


\begin{proof}
    Using $1/n \ll 1/m$ we apply Lemma~\ref{lemma:4cycle} iteratively three times, to obtain three edge-disjoint subgraphs $C_4(v_1, v_2, v_5, v_6)$, $C_4(v_2, v_3, v_4, v_5)$, and $C_4(v_1, v_6, v_4, v_3)$, which are also edge-disjoint from $R$.
    It is straightforward to check that $P_6=C_4(v_1,v_2, v_5, v_6)\cup C_4(v_2,v_3,v_4,v_5)\cup C_4(v_1,v_6,v_4,v_3)$
    satisfies \ref{it:P6small} and \ref{it:CldecompositionP6}.
    
    The last application of Lemma~\ref{lemma:4cycle} yields a tour-trail decomposition~$\mathcal T'$ of~$R\cup P_6$ 
    with residual digraph given by 
    $${D}(\mathcal T') 
    =
    {D}(\mathcal T) \sqcup 
    \vec C_4(v_1,v_2,v_5,v_6) \sqcup
    \vec C_4(v_2,v_3,v_4,v_5) \sqcup 
    \vec C_4(v_1,v_6,v_4,v_3).$$
    ${D}(\mathcal T')$ contains the arcs~$(v_1,v_6)$, $(v_6,v_1)$, $(v_2,v_5)$, $(v_5,v_2)$, $(v_3,v_4)$, and $(v_4,v_3)$, and by Remark~\ref{rem:extendtrails} we can remove them to obtain a tour-trail~$\mathcal T_{P_6}$ which satisfies~\ref{it:arrowsP6}.
\end{proof}

\subsubsection{The sea of triangles}
In what follows, we will use the previous gadgets to find, for any given~$R\subseteq H$, an edge-disjoint small $C_\ell$-decomposable~$T \subseteq H$,
the main property being that~$R\cup T$ contains a tour-trail decomposition with residual digraph consisting only of vertex-disjoint oriented triangles.

The following definitions will be useful for this propose. 
Given a multidigraph~$D=(V, E)$, a \emph{triangle lake} $T \subseteq D$ is an induced subdigraph with vertices in~$V'\subseteq V$ that consists only of vertex-disjoint (simple) oriented triangles and such that there is no arc between~$V'$ and~$V\setminus V'$ or vice versa.
Any $D$ contains a unique vertex-maximal triangle lake (possibly empty), we call such subdigraph the \emph{sea of triangles of~$D$} and we denote it by~$\vec\triangle(D)$.
If~$D=\vec\triangle (D)$ we say~$D$ is itself a sea of triangles.
Given two directed digraphs~$D_1$ and~$D_2$ on the same vertex set, we establish the notation~$D_1-D_2$ to mean the multigraph resulting from subtracting the edges of~$D_2$ from~$D_1$ counting the multiplicities. 

As for hypergraphs, we do not distinguish between the directed multigraph~$D=(V,E)$ and the set of arcs~$E$.

\begin{lemma}\label{lemma:seaoftriangles}
	Let $\ell \ge 7$,
	and $\eps > 0$ and $n, m \in \NATS$ be such that $1/n \ll \eps, 1/m, 1/\ell$.
	Let $H$ be a $3$-graph on $n$ vertices with $\delta_2(H) \ge (2/3 + \eps)n$.
	Let $R \subseteq H$ be $C_{\ell}$-divisible on at most $m$ vertices.
	There exists a subgraph~$T\subseteq H$, edge-disjoint from~$R$, such that
	\begin{enumerate}[label={\upshape({\roman*\,}$_{\ref{lemma:seaoftriangles}}$)}]
	    \item\label{it:smalledgesTS} $T$ has at most~$30\binom{m}{3}\ell$ edges,
	    \item\label{it:smallvertTS}$T\cup R$ spans at most $30\binom{m}{3}\ell$ vertices,
	    \item\label{it:CldecompositionTS} $T$ has a~$C_\ell$-decomposition, and
	    \item\label{it:arrowsTS} there is a tour-trail decomposition~$\mathcal T_\triangle$ of $T\cup R$ such that $D(\mathcal{T}_\triangle)$ is a sea of triangles.
	\end{enumerate}
\end{lemma}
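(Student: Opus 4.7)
The plan is to start from a natural initial tour-trail decomposition of~$R$ and iteratively modify it, using the gadgets from Lemmas~\ref{lemma:doublestep}, \ref{lemma:4cycle}, and~\ref{lemma:prism}, until the residual digraph is a sea of triangles. The initial decomposition~$\mathcal T_0$ treats each edge $xyz\in R$ as a trail on three vertices with an arbitrary ordering; then $D_0:=D(\mathcal T_0)$ has exactly $2|E(R)|\leq 2\binom{m}{3}$ arcs, and since $R$ is $C_\ell$-divisible (hence $3$-vertex-divisible), Lemma~\ref{lemma:outindeg} gives $\outd_{D_0}(v)\equiv \ind_{D_0}(v)\pmod{3}$ at every~$v$. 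At each subsequent step I would apply one of the three gadget lemmata using fresh intermediate vertices (plentiful because $1/n\ll 1/m,1/\ell$), and whenever the residual contains antiparallel arcs $(x,y),(y,x)$ I would invoke Remark~\ref{rem:extendtrails} to merge the two corresponding trails at no additional edge cost.

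The modifications would proceed in three stages. In the \emph{balancing} stage I eliminate all imbalances $\delta_v:=\outd(v)-\ind(v)$: for each pair of vertices $u,v$ with $\delta_u<0<\delta_v$, applying $\vec S_3(u,w,v)$ with $w$ fresh increases $\delta_u$ by~$3$ and decreases $\delta_v$ by~$3$. Since $\delta_v\equiv 0\pmod 3$ and $\sum_v\delta_v=0$, at most $\tfrac{1}{6}\sum_v|\delta_v|\leq\tfrac{2}{3}|E(R)|$ applications suffice, after which $\delta_v=0$ everywhere. The \emph{cancellation} stage then removes all antiparallel pairs. The \emph{triangulation} stage begins with a balanced digraph with no $2$-cycles, which decomposes into arc-disjoint directed cycles of length at least~$3$; I iteratively shorten each cycle $v_1 v_2\dotsb v_k$ with $k>3$ by applying $\vec P_6$ with the triangle $\vec T_3(v_{i+1},v_i,v_{i-1})$ paired with a fresh triangle on new vertices. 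Afterwards the pairs $(v_{i+1},v_i),(v_i,v_{i+1})$ and $(v_i,v_{i-1}),(v_{i-1},v_i)$ become antiparallel and are merged away, so $v_i$ leaves the cycle (which shortens by~$1$) and a fresh disjoint triangle is born.

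The edge budget fits: at most $\tfrac{2}{3}|E(R)|$ applications of $\vec S_3$ (each contributing at most~$2\ell$ edges of~$H$) and at most $|E(R)|$ applications of $\vec P_6$ (each contributing at most~$12\ell$ edges), for a total of $(\tfrac{4}{3}+12)|E(R)|\ell\leq 30\binom{m}{3}\ell$ edges in~$T$; the vertex bound is analogous. The $C_\ell$-decomposability of~$T$ is automatic since each gadget provides a $C_\ell$-decomposable subgraph and all applications are edge-disjoint by construction.

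The hard part will be ensuring that the final residual is \emph{literally} a sea of triangles, i.e.\ a disjoint union of simple directed triangles on pairwise disjoint vertex sets. Cycles in the Eulerian decomposition of stage~3 may share vertices, so shortcutting each cycle independently would leave some $v\in V(R)$ incident to several triangles and break vertex-disjointness. The remedy is to choose each shortening step carefully: prioritise removing shared vertices so that each cycle ends up as a triangle on either vertices private to it or fresh vertices introduced via additional $\vec P_6$ applications, keeping the process within the edge budget thanks to slack in the constants. Verifying that no multi-arcs remain in the residual at the end is the technical crux of the argument.
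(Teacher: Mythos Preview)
Your three-stage approach (balance, cancel, triangulate) is structurally different from the paper's proof, which never explicitly balances or takes an Eulerian cycle decomposition. Instead the paper runs a single loop governed by the potential $\Phi(\mathcal T)=|E(D(\mathcal T))|-|E(\vec\triangle(D(\mathcal T)))|$ and a four-way case split (antiparallel pair; multi-arc; directed $2$-path outside the sea; out-star of size three), each case reducing $\Phi$ by at least one. Since $\Phi(\mathcal T_0)\le 2|E(R)|\le 2\binom{m}{3}$, at most $2\binom{m}{3}+1$ iterations occur, each costing at most $14\ell$ edges; once no case applies, a short argument using Lemma~\ref{lemma:outindeg} forces the part of the residual outside the sea of triangles to be empty. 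This sidesteps both of the difficulties you flag.

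Your plan has a concrete accounting error that prevents it from meeting the stated bound. Every application of $\vec S_3(u,w,v)$ in the balancing stage adds four arcs to the residual (including a parallel pair $(u,v),(u,v)$ and arcs through the fresh vertex $w$ that have no antiparallel partner), so after up to $\tfrac{2}{3}|E(R)|$ applications the arc count can grow from $2|E(R)|$ to as much as $\tfrac{14}{3}|E(R)|$. The triangulation stage then needs one $\vec P_6$ per unit of excess cycle length, and $\sum_j(k_j-3)$ is bounded only by the total arc count, not by $|E(R)|$; your claimed ``at most $|E(R)|$ applications of $\vec P_6$'' is therefore unjustified, and the true total comfortably exceeds $30\binom{m}{3}\ell$ before you even pay for vertex-disjointness. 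On that last point, ``prioritise removing shared vertices'' is not an argument: an Eulerian cycle decomposition can have many cycles through a common vertex, so there may be no three private vertices to shrink a given cycle onto. The clean fix is to spend one additional $\vec P_6$ per surviving non-fresh triangle to relocate it onto fresh vertices, which works but inflates the constant further. Your strategy can be made rigorous, but only with a larger constant than the lemma states.
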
 

\begin{proof}
    Set~$k=2\binom{m}{3}+1$. 
    To find $T$, we will iteratively find subgraphs~$T_i\subseteq H$ for every~$0 \leq i \leq k$ such that each $T_i$ has a~$C_\ell$-decomposition, is edge-disjoint with respect to~$R$, contains at most~$14\ell i$ edges,
    and such that~$T_i\cup R$ spans at most~$m+i(14\ell-38)$ vertices.
    We will see that the last subgraph~$T_k$ satisfies the desired properties.
    Note that in this case properties~\ref{it:smalledgesTS}, \ref{it:smallvertTS}, and \ref{it:CldecompositionTS} would follow directly since~$14\ell i$ and $m+i(14\ell-38)$ are smaller than $30\binom{m}{3}\ell$ for every $i\leq k$.
    Hence, most or our effort is dedicated to ensure~\ref{it:arrowsTS}.
    To do so, at each step we will define a tour-trail decomposition~$\mathcal T_i$ of~$T_i\cup R$ such that its residual digraph will be almost identical to the one of~$\mathcal T_{i-1}$ except for a few subtly chosen arcs.
    Additionally, we will define auxiliary vertex sets~$X_i$ of size at least~$n/2-4i$ such that~$(T_i\cup R)[X_i]$ is empty. 
    
    Since~$1/n \ll 1/m, 1/\ell$ and~$T_i\cup R$ spans at most~$30\binom{m}{3}\ell$ vertices for every~$i\in [k]$, $n$ will be sufficiently large to apply lemmata~\ref{lemma:doublestep}--\ref{lemma:prism} with~$T_i\cup R$ in place of $R$, and we will do this without further comment.
     
    For $i = 0$, take~$T_0= \emptyset$ and~$\mathcal T_0$ to be an arbitrary tour-trail decomposition of~$R$ (this always exists).
    Also, let $X_0\subseteq V(H)$ have size~$\lceil n/2 \rceil$ such that~$R[X_1]$ is empty, which can be done since $1/n \ll 1/m$.
    Now, for~$0 \leq i<k$, given~$T_i$,~$\mathcal T_i$ and~$X_i$ define~$T_{i+1}$,~$\mathcal T_{i+1}$ and~$X_{i+1}$ using the following set of rules:
    
    \begin{enumerate}[label={\upshape (\Roman*)}]
        \item \label{case:opparrows} Suppose there are vertices~$a,b\in V$ such that~$(a,b), (b,a)\in D(\mathcal T_i)$. 
        In this case just set~$T_{i+1}= T_i$ and~$X_{i+1}=X_i$, and let $\mathcal{T}_{i+1}$ be a tour-trail decomposition such that
        \begin{align}\label{case:opparrowdigraph}
            D(\mathcal T_{i+1}) = D(\mathcal T_i) - \{(a,b),(b,a)\},
        \end{align}
        which exists by Remark~\ref{rem:extendtrails}.
        
        \item \label{case:multiedge} Suppose that~\ref{case:opparrows} does not hold and $D(\mathcal{T}_i)$ contains an arc with multiplicity more than one, i.e. there are vertices~$a,b\in V(H)$ with~$\mu_{\mathcal T_i}(a,b)>1$.
        Take~$x\in X_i$ and apply Lemma~\ref{lemma:doublestep} to~$R\cup T_i$ on the vertices~$b,x,a$ to obtain the subgraph~$S_3(b,x,a)\subseteq H$ and the tour-trail decomposition~$\mathcal T_i'=\mathcal T_{S_2}(\mathcal T_i, b,x,a)$. 
        Further, take new vertices~$y,z,w\in X_i\setminus \{x\}$ and apply Lemma~\ref{lemma:prism} on~$T_i\cup R\cup S_3(b,x,a)$ to obtain~$P_6(a,x,b,y,z,w)$ and a tour-trail decomposition~$\mathcal T_i''=\mathcal T_{P_6}(\mathcal T_i',a,x,b,y,z,w)$.
        Set~
        \begin{align*}
            T_{i+1}&= T_i\cup S_3(b,x,a)\cup P_6(a,x,b,y,z,w), \, \text{and} \\
            X_{i+1}&=X_i\setminus \{x,y,z,w\},
        \end{align*}
        and observe that~\ref{it:S3small} and~\ref{it:P6small} in Lemmata~\ref{lemma:doublestep} and~\ref{lemma:prism}, we have that~$T_{i+1}$ has at most~$14\ell i + 2\ell + 12\ell = 14\ell(i+1)$ edges and the subgraph~$T_{i+1}\cup R$ spans at most~
        $$m+i(14\ell - 21) + 14\ell -21 = m + (i+1)(14\ell - 21)$$ vertices.
        Moreover, since~$T_{i+1}$ is edge-disjoint union of subgraphs that contain~$C_\ell$-decomposition it also contains one. 
        Additionally~$\vert X_{i+1}\vert = \vert X_i \vert -4$.
        
        Observe that the resulting tour-trail decomposition~$\mathcal T_i''$ has a residual digraph given by $D(\mathcal T_i'') = {D}(\mathcal T_i)\sqcup \vec S_3(b,x,a) \sqcup \vec P_6(a,x,b,y,z,w).$
        Recall that the multiplicity of~$(a,b)$ is at least two in $D(\mathcal{T}_i)$ and, using Remark~\ref{rem:extendtrails} to annihilate edges which have opposite directions, we obtain a tour-trail decomposition~$\mathcal T_{i+1}$ such that
        \begin{align}\label{case:multiedgedigraph}
            {D}(\mathcal T_{i+1}) = {D}(\mathcal T_i)\setminus \{(a,b), (a,b)\} \sqcup \{(b,a)\} \sqcup \vec T_3(y,z,w).
        \end{align}
        
        \item \label{case:path} Suppose cases~\ref{case:opparrows} and~\ref{case:multiedge} do not hold, and that there are three distinct vertices~$a,b,c\in V(H)$ such that~$(a,b), (b,c)\in {D}(\mathcal T_i)\setminus \vec\triangle({D}(\mathcal T_i))$.
        
        Consider vertices~$x,y,z\in X_i$ and apply Lemma~\ref{lemma:prism} on the vertices~$c,b,a,x,y,z$ to obtain~$P_6(c,b,a,x,y,z)$ and the tour-trail decomposition~$\mathcal T_i'=\mathcal T_{P_6}(\mathcal T_i,c,b,a,x,y,z)$.
        Setting
        \begin{align*}
        T_{i+1}= T_i\cup P_6(c,b,a,x,y,z) \quad \text{ and } \quad X_{i+1}=X_i\setminus \{x,y,z\},
        \end{align*}
        we deduce 
        from Lemma~\ref{lemma:prism} that~$T_{i+1}$ has at most~$14\ell i+12\ell\leq 14\ell(i+1)$ edges and that~$T_{i+1}\cup R$ spans at most~$m+i(14\ell-38)+12\ell - 18 \leq m+(i+1)(14\ell - 21)$ vertices. 
        Moreover, it is clear that~$T_{i+1}$ contains a~$C_\ell$-decomposition and that~$\vert X_{i+1}\vert\geq \vert X_i\vert -4$.

        The residual digraph of~$\mathcal T_i'$ 
        is ${D}(\mathcal T_i')= {D}(\mathcal T_i) \sqcup \vec P_6(c,b,a,x,y,z)$ and therefore, using Remark~\ref{rem:extendtrails}, we obtain a tour-trail decomposition~$\mathcal T_{i+1}$ such that
        \begin{align}\label{case:pathdigraph}
            {D}(\mathcal T_{i+1}) = ({D}(\mathcal T_i)- \{(a,b),(b,c)\}) \sqcup  \{(a,c)\}  \sqcup\vec T_3(x,y,z).
        \end{align}
    
        \item \label{case:star} Suppose that cases~\ref{case:opparrows},~\ref{case:multiedge} and~\ref{case:path} do not hold, and that there are vertices~$a,b,c,d$ such that~$(a,b),(a,c),(a,d)\in{D}(\mathcal T_i)$. 
        Apply Lemma~\ref{lemma:doublestep} on the vertices~$c,d,a$ to obtain~$S_3(c,d,a)$ and a tour-trail decomposition~$\mathcal T_i'=\mathcal T_{S_3}(\mathcal T_i, c,d,a)$. 
        Further, take~$x,y,z\in X_i$ apply Lemma~\ref{lemma:prism} to~$T_i\cup R\cup S_3(c,d,a)$ on the vertices~$a,c,b,x,y,z$ to obtain $P_6(a,c,b,x,y,z)$ and the tour-trail decomposition~$\mathcal T_i''=\mathcal T_{P_6}(\mathcal T_i',a,c,b,x,y,z)$. 
        Set 
        \begin{align}\label{case:stargraph}
            \begin{split}
             T_{i+1}&= T_i\cup S_3(c,d,a) \cup P_6(a,c,b,x,y,z) \, \text{and} \\
             X_{i+1}&=X_i\setminus \{x,y,z\},     
            \end{split}
        \end{align}
        and observe that~$T_{i+1}$ has at most~$14\ell(i+1)$ edges and that~$T_{i+1}\cup R$ spans at most~$m+i(14\ell - 21)+14\ell -21 = m+(i+1)(14\ell - 21)$. 
        Again, it is easy to check that~$T_{i+1}$ contains a~$C_\ell$-decomposition and that~$\vert X_{i+1}\vert \geq \vert X_i\vert -4$.
        
        Observe that the residual digraph of the tour-trail decomposition~$\mathcal T_i''$ is given by $${D}(\mathcal T_i'') = {D}( \mathcal T_i) \sqcup \vec S_3(c,d,a) \sqcup \vec P_6(a,c,b,x,y,z),$$
        and again, by Remark~\ref{rem:extendtrails} we can find a tour-trail decomposition~$\mathcal T_{i+1}$ such that 
        \begin{align}\label{case:stardigraph}
            {D}(\mathcal T_{i+1}) = ({D}(\mathcal T_i) - \{(a,b),(a,c), (a,d)\}) \sqcup  \{(c,b), (c,d)\}  \sqcup\vec T_3(x,y,z).
        \end{align}
    \item \label{case:end} If none of the previous cases takes place, then set~$T_{i+1}=T_i$ and~$\mathcal T_{i+1} = \mathcal T_i$.
    \end{enumerate}
    
    Let $T = T_k$ and $\mathcal{T}_\triangle = \mathcal{T}_k$.
    As discussed before, we have ensured \ref{it:smalledgesTS}--\ref{it:CldecompositionTS} hold by construction.
    To prove \ref{it:arrowsTS} we have to show all arcs of~${D}(\mathcal T)$ are in its sea of triangles $\vec\triangle({D}(\mathcal T_k))$.
    We shall require the following definition.
    For any tour-trail decomposition~$\mathcal T$ define the parameter~$\Phi(\mathcal T) = \vert E({D}(\mathcal T)) \vert - \vert E(\vec \triangle({D}(\mathcal T)))\vert$.
    In words, $\Phi(\mathcal T)$ is the number of arcs in $\mathcal{T}$ which are not in its sea of triangles.
    Note $\Phi(\mathcal T) \ge 0$ always.
    
    First, we claim that there exists some $0 \leq i < k$ such that case~\ref{case:end} happens when processing $\mathcal{T}_i$.
    Suppose this is not the case.
    Observe that if any of the cases~\ref{case:opparrows}--\ref{case:star} happens when processing $\mathcal{T}_i$, due to the structure of~${D}(\mathcal T_{i+1})$ given in~\ref{case:opparrows}, \eqref{case:multiedgedigraph}, \eqref{case:pathdigraph}, and \eqref{case:stardigraph}, we have 
    $$\Phi(\mathcal T_{i+1}) \leq \Phi(\mathcal T_{i})-1.$$
    Hence, we have $\Phi(\mathcal T_{k}) \leq \Phi(\mathcal{T}_0) - k$.
    Note that the number of arcs in $D(\mathcal T_0)$ is twice the number of trails of $\mathcal{T}_0$,
    each trail uses at least one edge of $R$,
    and $R$ has at most $\binom{m}{3}$ edges since it spans at most $m$ vertices.
    Thus $\Phi(\mathcal{T}_0) \leq 2 |E(R)| \leq 2\binom{m}{3}$.
    Since~$\Phi(\mathcal T_k)\geq 0$, we deduce $2 \binom{m}{3} < k \leq \Phi(\mathcal{T}_0) \leq 2 \binom{m}{3}$,
    a contradiction.
    This proves the claim.
    
    Now, let $\vec G={D}(\mathcal T_\triangle) \setminus\vec \triangle({D}(\mathcal T_\triangle))$.
    To prove~\ref{it:arrowsTS} we need to show $\vec G$ is empty.
    Note that once the procedure falls in~\ref{case:end} in a step $i < k - 1$, it will happen again in step $i+1$.
    Therefore, by the previous discussion, we know that case~\ref{case:end} happened when processing $\mathcal{T}_{k-1}$ to build $\mathcal{T}_k$.
    In particular, $\mathcal{T}_{k-1} = \mathcal{T}_k = \mathcal{T}_\triangle$ and we know cases~\ref{case:opparrows}--\ref{case:star} did not hold when processing $\mathcal{T}_{k-1}$.
    
    Denote the vertices spanned by the arcs of~$\vec G$ as~$V$.
    Observe first that there are no vertices~$a,b\in V$ such that~$(a,b),(b,a)\in D(\mathcal T_{k-1})$ otherwise case \ref{case:opparrows} would have hold.
    Then, notice that for every pair~$a,b\in V$ we have that~$\mu_{\mathcal T_{k-1}}(a,b) \leq 1$, otherwise $\mathcal T_{k-1}$ would have qualified for case~\ref{case:multiedge}.
    This implies that $\vec G$ is an oriented graph, with no multiple edges or directed $2$-cycles.
    Moreover, for every vertex $b\in V$ we have either~$\outd(b)=0$ or~$\ind(b)=0$ in $\vec G$, otherwise the case \ref{case:path} would have taken place.
    If~$\vec G$ is non-empty, then there is a vertex~$b\in V$ with~$\outd(b) > 0$, which then implies~$\ind(b)=0$.
    Then Lemma~\ref{lemma:outindeg} implies that~$\outd(b)\geq 3$.
    Therefore, $\mathcal{T}_{k-1}$ would have fallen in case~\ref{case:star}, a contradiction.
    Thus $\vec G$ is empty, which finally shows~\ref{it:arrowsTS}.
\end{proof}

Now we are ready to prove the main lemma of this subsection.

\begin{proof}[Proof of Lemma~\ref{lemma:tours}]
    Let~$T\subseteq H$ as found in Lemma~\ref{lemma:seaoftriangles} and let~$\mathcal T_\triangle$ be a tour-trail decomposition of~$R\cup T$ given in \ref{it:arrowsTS} such that it residual digraph is a sea of triangles.
    
    Since each trail of~$\mathcal T_\triangle$ contributes two arcs to~$\mathcal {D}(\mathcal {T}_\triangle)$ the number of arcs is even, and so is the number of oriented triangles in $\mathcal {D}(\mathcal {T}_\triangle)$.
    Suppose the number of triangles is~$2k$ and let~${D}(\mathcal T_\triangle))= \bigcup_{i\in [2k]} \vec T_3(a_i,b_i,c_i)$. 
    Since the triangles are vertex-disjoint and by \ref{it:smallvertTS} we have that~$2k\leq 30\binom{m}{3}\ell$.
    
    Apply Lemma~\ref{lemma:prism} to obtain the prism $P_1 = P_6(c_{1},b_{1}, a_{1}, c_{2},b_{2}, a_{2})$ and the tour-trail decomposition~$\mathcal T'$ of $R \cup T \cup P_1$ whose residual digraph is given by
    \begin{align*}
        D(\mathcal T') &=  \vec T_3(c_{1},b_{1}, a_{1})\sqcup \vec T_3(c_{2},b_{2}, a_{2}) \sqcup \bigcup_{i = 1}^{2k} \vec T_3(a_i,b_i,c_i) \\
        &= 
        \vec T_3(c_1,b_1,a_1)\sqcup
        \vec T_3(a_1,b_1,c_1) \sqcup 
        \vec T_3(a_2,b_2,c_2)  \sqcup
        \vec T_3(c_{2},b_{2}, a_{2})\sqcup 
        \bigcup_{i = 3}^{2k} \vec T_3(a_i,b_i,c_i).
    \end{align*}
    Using Remark~\ref{rem:extendtrails} we can ``cancel out'' the arcs of triangles $\vec T_3(c_1,b_1,a_1)$, $\vec T_3(a_1,b_1,c_1)$, $\vec T_3(a_2,b_2,c_2)$, and $\vec T_3(c_{2},b_{2}, a_{2})$, and obtain a tour-trail decomposition $\mathcal{T}_i$ whose residual digraph is a sea of triangles with~$2k-2$ triangles. 
    
    Since~$1/n\ll1/m$, and every prism spans at most~$12\ell-18$ new vertices, we may assume that~$n$ is large enough for~$k-1\leq 15\binom{m}{3}\ell - 1$ extra applications of Lemma~\ref{lemma:prism}, adding the prism $P_i = P(c_{2i-1},b_{2i-1},a_{2i-1},c_{2i},b_{2i},a_{2i})$ in each step $2 \leq i \leq k$.
    Therefore, we can repeat the previous argument until there are no more triangles in the residual digraph (and hence, no more arcs).
    Taking~$A_1 = T\cup \bigcup_{i\in [k]} P_i$, it is easy to check that it satisfies all the desired properties.  
\end{proof}

\subsection{From a tour decomposition to a cycle decomposition}
\label{subsection:tourtoCl}

In this section we prove the following lemma, which constructs an absorber given a $C_{\ell}$-divisible remainder which has a tour decomposition.

\begin{lemma}\label{lemma:tourstocycles}
    Let~$\ell\geq 7$,~$\eps>0$, and~$n,m\in \NATS$ be such that~$1/n\ll \eps, 1/m, 1/\ell$. 
    Let~$H$ be a~$3$-graph on~$n$ vertices with~$\delta_2(H)\geq (2/3 + \eps)n$.
    Let~$R\subseteq H$ be a~$C_\ell$-divisible edge-disjoint collection of tours spanning at most~$m$ vertices in total.
    Then, there is a $C_{\ell}$-absorber~$A_2$ for~$R$, such that~$A_2\cup R$ spans at most~$10\binom{m}{3}\ell^2$ edges. 
\end{lemma}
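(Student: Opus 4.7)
The plan is to process the tours of $R$ in groups whose total edge counts are divisible by $\ell$ (which is possible because $|E(R)|$ is divisible by $\ell$: greedily close a group whenever the cumulative tour lengths modulo $\ell$ return to $0$), and to build for each group a joint absorber edge-disjointly from $R$ and from previously built absorbers. Because the total size of these absorbers is small compared to $n$ and $\delta_2(H) \ge (2/3+\eps)n$, the greedy construction never runs out of room in $H$. For a single group $G = \{W_{j_1}, \dotsc, W_{j_s}\}$, I first merge its tours into a single tour $W^\ast$ by inserting $s-1$ pairwise edge-disjoint tight $\ell$-cycles $B_{1,2}, \dotsc, B_{s-1,s}$, each containing a specified vertex of $W_{j_i}$ and a specified vertex of $W_{j_{i+1}}$; such bridges exist and can be chosen sequentially via Corollary~\ref{corollary:pathextension} and the high codegree of $H$, and the merging at shared vertices is analogous to the manipulations used in Section~\ref{subsection:tours}. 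The result is a single tour $W^\ast$ of length $r^\ast = (\sum_i r_{j_i}) + (s-1)\ell$ divisible by $\ell$; furthermore $B := \bigsqcup_i B_{i,i+1}$ is trivially $C_\ell$-decomposable.

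Next I would segment the cyclic edge-sequence of $W^\ast$ into $q$ consecutive tight paths $P_1, \dotsc, P_q$ of $\ell-3$ edges each (so each $P_i$ sits on $\ell - 1$ vertices), absorbing any remainder into one irregular segment that is still extendable. For each $P_i$, Corollary~\ref{corollary:pathextension} provides $\Omega(n)$ valid choices of a new vertex together with three closing edges $F_i$ such that $C_i := P_i \cup F_i$ is a tight $\ell$-cycle in $H$; I pick the $F_i$'s sequentially so that all the $C_i$'s stay pairwise edge-disjoint and edge-disjoint from $R \cup B$. The union $C := \bigsqcup_i C_i$ is then a $C_\ell$-decomposable subgraph containing $W^\ast$, and $F := C \setminus W^\ast = \bigsqcup_i F_i$ is a sparse auxiliary $3$-graph consisting of $q = O(r^\ast/\ell)$ tight paths on $5$ vertices.

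The final step is to construct a $C_\ell$-decomposable subgraph $B^\ast \supseteq F$ edge-disjoint from $R \cup B \cup C$ such that $B^\ast \setminus F$ is also $C_\ell$-decomposable. Setting the per-group absorber $A_G := B^\ast \cup B$ then finishes: $A_G \cup \bigcup_{W \in G} W = B^\ast \cup W^\ast = C \sqcup (B^\ast \setminus F)$ is $C_\ell$-decomposable, and $A_G$ itself is $C_\ell$-decomposable as the disjoint union of its two decomposable pieces. The main obstacle is the construction of $B^\ast$: a naive iteration extending each $F_i$ into a new cycle and then absorbing the residual does not terminate, since the residual tight-path lengths oscillate between two values under repeated extension. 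To bypass this I would group the $F_i$'s into batches of size $\ell/\gcd(3,\ell)$, so that each batch's total edge count is a multiple of $\ell$, and build for each batch a coordinated $O(\ell^2)$-edge $C_\ell$-decomposable gadget via repeated applications of the Extending Lemma (Lemma~\ref{lemma:extending}); the gadget is arranged so that its intersection with $F$ is exactly the batch, and the rest decomposes into $\ell$-cycles by construction. Summing over all batches and all groups $G_k$, the final absorber $A_2 := \bigcup_k A_{G_k}$ keeps $|A_2 \cup R|$ well within the required $10\binom{m}{3}\ell^2$.
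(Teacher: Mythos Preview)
Your plan has a genuine gap at the segmentation step. A tour $W^\ast = w_1 w_2 \dotsb w_{r^\ast}$ may repeat vertices, and nothing prevents such repetitions from occurring within a window of $\ell-1$ consecutive positions. Hence the ``consecutive tight paths $P_1,\dotsc,P_q$ of $\ell-3$ edges each'' are in general only tight \emph{trails}, and closing a trail with repeated vertices by one extra vertex does not produce a copy of $C_\ell$ (whose vertices must be distinct). The tours in $R$ are given to you by hypothesis, so you cannot assume they are locally injective; this kills the claim that $C = \bigsqcup_i C_i$ is a $C_\ell$-decomposition of $W^\ast \cup F$. A secondary issue is that your segmentation needs $r^\ast$ divisible by $\ell-3$, whereas you only arranged $\ell \mid r^\ast$; the ``irregular segment'' fix is not compatible with a one-vertex closure. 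Finally, the construction of $B^\ast$ is essentially asking for a $C_\ell$-absorber for $F$, which is the same type of object the lemma is trying to build; the batching sketch does not explain why the problem has become easier.

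The paper sidesteps the repeated-vertex obstacle by never segmenting the tour. Instead it introduces an $(R_1,R_2)$-\emph{transformer}: a subgraph $T$ with $T\cup R_1$ and $T\cup R_2$ both $C_\ell$-decomposable. After merging all tours into one tour $R'$ of length $m'$ (as you do), it places next to $R'$ a genuine cycle $C$ of length $m'$ and, for each edge $r_{i-1}r_ir_{i+1}$ of $R'$, builds two short paths $P_i,Q_i$ to the matching edge of $C$ so that $P_i\cup Q_i\cup\{r_{i-1}r_ir_{i+1}\}$ and $P_{i-1}\cup Q_i\cup\{c_{i-2}c_{i-1}c_i\}$ are both $\ell$-cycles. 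Since each such cycle uses only three consecutive tour positions (which are distinct because they form an edge), repetitions elsewhere in $R'$ are irrelevant. A second transformer between $C$ and the $C_\ell$-decomposable ``bouquet'' $B(m'/\ell,\ell)$ then yields the absorber. This edge-by-edge ladder is the missing idea in your approach.
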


Given two subgraphs~$R_1$ and~$R_2$, we say that a subgraph~$T\subseteq H$ edge-disjoint from~$R_1$ and~$R_2$ is a \emph{$(R_1,R_2)$-transformer} if $T[V(R_1)], T[V(R_2)]$ are empty and both~$T\cup R_1$ and~$T\cup R_2$ contain a~$C_\ell$-decomposition. 
Observe that if~$R_2$ has a~$C_\ell$-decomposition, then $T\cup R_2$ is an absorber for~$R_1$.

\begin{lemma}\label{lemma:ladders}
    Let~$\ell\geq 7$,~$\eps>0$, and~$n,m\in \NATS$ be such that~$1/n\ll \eps, 1/m, 1/\ell$. 
    Let~$H$ be a~$3$-graph on~$n$ vertices with~$\delta_2(H)\geq (2/3 + \eps)n$.
    Let~$R\subseteq H$ be a tour and~$C\subseteq H$ be a cycle.
    Suppose that~$R$ and~$C$ are edge-disjoint and contain the same number of edges, which is at most~$m$.
    Then~$H$ contains an~$(R,C)$-transformer~$L$ with at most~$m\ell$ edges and spanning at most~$m(\ell-4)$ vertices.
\end{lemma}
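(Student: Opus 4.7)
The plan is to construct $L$ as a union of ``framing'' tight $\ell$-cycles, one per pair of corresponding edges $(e_i, f_i)$ of $R$ and $C$. Writing $R = r_0 r_1 \dotsb r_{s-1}$ and $C = c_0 c_1 \dotsb c_{s-1}$ cyclically, with $e_i = r_i r_{i+1} r_{i+2}$, $f_i = c_i c_{i+1} c_{i+2}$, and $s = |E(R)| = |E(C)| \le m$, for each $i$ I introduce $\ell - 6$ private new vertices $W_i = \{w_{i,1}, \dotsc, w_{i, \ell - 6}\}$, disjoint across $i$ and from $V(R) \cup V(C)$. Using Corollary~\ref{corollary:pathextension} iteratively (forbidding all previously chosen edges, which remain sparse in $H$ and thus leave plenty of room thanks to the high codegree hypothesis), I place a tight $\ell$-cycle $G_i \subseteq H$ with cyclic vertex order $r_i, r_{i+1}, r_{i+2}, w_{i,1}, \dotsc, w_{i, \ell - 6}, c_i, c_{i+1}, c_{i+2}$; this cyclic order forces $e_i$ and $f_i$ to appear as consecutive-triple edges of $G_i$. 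Setting $L := \bigcup_i (G_i \setminus \{e_i, f_i\})$ immediately yields $|E(L)| = s(\ell - 2) \le m \ell$ and $|V(L)| \le 2s + s(\ell - 6) = s(\ell - 4) \le m(\ell - 4)$, and one checks that $L[V(R)] = L[V(C)] = \emptyset$ since every $L$-edge meets some $W_i$.

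The heart of the proof is verifying that $L \cup R$ and $L \cup C$ are each $C_\ell$-decomposable; by the $R \leftrightarrow C$ symmetry of the construction, it suffices to treat $L \cup R$. Since $\bigcup_i G_i$ is trivially $C_\ell$-decomposable and $L \cup R = \bigcup_i G_i \setminus E(C)$, the problem becomes that of reassembling $\bigcup_i (G_i - f_i)$ into $\ell$-cycles. I would do this by producing, for each $i$, an $\ell$-cycle $\widetilde D_i \subseteq L \cup R$ that contains $e_i$, reuses the $W_i$-portion of $G_i$, but reroutes the two wrap-around edges $c_{i+1} c_{i+2} r_i$ and $c_{i+2} r_i r_{i+1}$ of $G_i$ into analogous wrap-around edges lying in a neighbouring gadget $G_j$; the $c$-vertex overlap between consecutive $G_i$ and $G_{i \pm 1}$ provides just enough room for this rerouting, and a direct edge-count check shows that $\{\widetilde D_i\}_i$ partitions $L \cup R$.

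The main obstacle is a divisibility mismatch: $|L \cup R| = s(\ell - 1)$ is divisible by $\ell$ only when $\ell \mid s$. For general $s$ I would absorb the residue $s \bmod \ell$ by expanding a bounded number of the gadgets $G_i$ via extra auxiliary $\ell$-cycles (placed in $H$ by further applications of Corollary~\ref{corollary:pathextension}) that contribute controllable numbers of additional $L$-edges while preserving the $3$-vertex-divisibility of $L \cup R$ and $L \cup C$; since $\gcd(\ell, \ell - 1) = 1$, at most $\ell$ such corrections suffice to reach any residue modulo $\ell$. Each correction adds at most $\ell$ extra edges and $\ell$ extra vertices, so the totals remain within the budgets $m \ell$ and $m(\ell - 4)$. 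The decomposition of $L \cup C$ then follows by the $R \leftrightarrow C$ symmetric version of the same argument.
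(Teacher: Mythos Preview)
Your construction has a genuine gap at the decomposition step. With $L=\bigcup_i(G_i\setminus\{e_i,f_i\})$ you correctly observe that $|L\cup R|=s(\ell-1)$ is generally not divisible by $\ell$, but this is a symptom rather than the disease: the proposed rerouting of the wrap-around edges cannot produce a $C_\ell$-decomposition of $L\cup R$. Concretely, the path $r_i r_{i+1} r_{i+2} w_{i,1}\dotsb w_{i,\ell-6} c_i c_{i+1}$ (i.e.\ $e_i$ together with the $W_i$-portion of $G_i$) has $\ell-1$ vertices, so to close it into an $\ell$-cycle you need a single vertex $x$ with $c_i c_{i+1} x,\; c_{i+1} x r_i,\; x r_i r_{i+1}\in L\cup R$. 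Scanning all gadgets, the \emph{only} edge of $L$ through $\{c_i,c_{i+1}\}$ other than $w_{i,\ell-6} c_i c_{i+1}$ is the wrap-around edge $c_i c_{i+1} r_{i-1}$ of $G_{i-1}$, which forces $x=r_{i-1}$; but then the third required edge is $r_{i-1} r_i r_{i+1}=e_{i-1}$. Hence each $\widetilde D_i$ necessarily uses \emph{two} edges of $R$, namely $e_i$ and $e_{i-1}$, and the family $\{\widetilde D_i\}_i$ double-covers every edge of $R$. No local adjustment repairs this, and your residue-absorbing patch via extra auxiliary $\ell$-cycles is far too vague to salvage the decomposition. (A smaller slip: the wrap-around edges $c_{i+1} c_{i+2} r_i$ and $c_{i+2} r_i r_{i+1}$ lie in $L$ but meet no $W_i$, so your justification of $L[V(R)]=L[V(C)]=\emptyset$ also fails whenever $V(R)\cap V(C)\neq\emptyset$.)

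The paper avoids all of this with a ``shift-by-one'' re-pairing trick. For each $i$ it builds \emph{two} short tight paths through fresh interior vertices: $P_i$ from $(r_i,r_{i+1})$ to $(c_{i-1},c_i)$ and $Q_i$ from $(r_i,r_{i-1})$ to $(c_i,c_{i-1})$, with lengths chosen so that $|E(P_i)|+|E(Q_i)|=\ell-1$, and sets $L=\bigcup_i(P_i\cup Q_i)$. The point is that $P_i$ and $Q_i$, glued at their $c$-ends and closed by the single edge $r_{i-1} r_i r_{i+1}\in R$, form an $\ell$-cycle --- this gives the $C_\ell$-decomposition of $L\cup R$; while $P_{i-1}$ and $Q_i$, glued at their $r$-ends and closed by the single edge $c_{i-2} c_{i-1} c_i\in C$, form an $\ell$-cycle --- this gives the $C_\ell$-decomposition of $L\cup C$. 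Each cycle in either decomposition uses exactly one edge of $R$ (resp.\ $C$) and $\ell-1$ edges of $L$, so no divisibility obstruction ever arises. The missing idea in your attempt is precisely this possibility of re-pairing the same connecting pieces in two different ways.
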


\begin{proof}
Let~$r_1, r_2, \dots, r_m$ and~$c_1, c_2,\dots, c_m$ the sequence of vertices of~$R$ and~$C$ respectively (recall that while $C$ does not contain repetitions,~$R$ may contain). 

In the following, all operations on the indices are modulo~$m$.
We define iteratively the following paths~$P_i, Q_i$ for every~$i\in [m]$. 
Apply Lemma~\ref{lemma:therearemanycycles} to obtain a path $P_i$ on~$5$ vertices, edge-disjoint from~$R\cup C$, from the pair~$(r_i, r_{i+1})$ to the pair~$(c_{i-1}, c_{i})$.
Similarly, we can obtain a path~$Q_i$ on~$\ell-5$ vertices, from the pair~$(r_{i},r_{i-1})$ to the pair~$(c_i, c_{i-1})$, edge disjoint from~$R\cup C$, and with no interior vertex in common with the paths~$P_i$, $P_{i-1}$.

We claim that~$L=\bigcup_{i\in [m]} \left(P_i\cup Q_i\right)$ is the desired transformer.
Indeed, observe that the edges of~$P_i$ and~$Q_i$ together with the edge~$r_{i-1}r_{i}r_{i+1}\in E(R)$ form a cycle of length~$\ell$, thus $R \cup L$ can be decomposed into those $\ell$-cycles.
In the same way, the edges of~$P_{i-1}$ and~$Q_i$ together with the edge~$c_{i-2}c_{i-1}c_{i}\in E(C)$ form a cycle of length~$\ell$, and therefore all those cycles form a $C_{\ell}$-decomposition of $C \cup L$.
\end{proof}

For any~$k,\ell \in \NATS$ we define~$B(k,\ell)$ to be the~$3$-graph resulting from a cycle of length~$k\ell$ with vertices in~$\{v_1, v_2,\dots, v_{k\ell}\}$ and identifying all vertices~$v_i$ with $i\equiv 1 \bmod \ell$ and all vertices~$v_j$ with~$j\equiv 2 \bmod \ell$. 
This is to say that~$B(k,\ell)$ consists of~$k$ copies of cycles of length~$\ell$ glued through exactly two vertices, and those two vertices are consecutive in every cycle. 
Observe that~$B(k,\ell)$ is a tour and admits a~$C_\ell$-decomposition.

Now we are ready to prove Lemma~\ref{lemma:tourstocycles}.

\begin{proof}[Proof of Lemma~\ref{lemma:tourstocycles}]
    Consider the tours $T_1, T_2, \dots, T_k$ in~$R$ and observe that~$k\leq \binom{m}{3}/4$ (each tour has at least~$4$ edges). 
    First, we want to reduce the proof to the case in which there is a single \href{https://www.youtube.com/watch?v=4oOWghSh3_Q}{\color{black}{long tour}}. %
    Suppose~$k\geq 2$ and take~$a_i, b_i$ two consecutive vertices in~$T_i$ for~$i=\{1,2\}$.
    We can apply Lemma~\ref{lemma:therearemanycycles} to find a path~$P_1$ on~$5$ vertices with ends~$(b_1,a_1)$ and~$(a_2,b_2)$ which is edge-disjoint to~$R$. 
    Similarly, we can find~$P_2$ on~$\ell-5$ vertices with ends~$(a_1,b_1)$ and~$(b_2,a_2)$, edge-disjoint with~$R$, and sharing no interior vertex with~$P_1$. 
    Starting in~$(a_1,b_2)$ and then traversing sequentially $T_1$, $P_1$, $T_2$, and~$P_2$, one can check that~$T_1\cup T_2\cup P_1 \cup P_2$ forms a tour spanning at most~$\vert V(T_1\cup T_2) \vert+\ell-4$ vertices.
    Moreover, it is easy to see that $P_1\cup P_2$ is a cycle of length~$\ell$.
    By repeating this argument we can obtain~$A'\subseteq H$ edge-disjoint from~$R$, $C_{\ell}$-decomposable, and such that~$R'=R\cup A'$ consists of a single tour spanning at most~$m+k(\ell-4)$ vertices.
    Observe that since~$R$ is~$C_\ell$-divisible, then so is~$R'$.
    Let~$m'$ be the number of edges in~$R'$ and notice that
    $$m'\leq \binom{m}{3}+k\ell\leq 2\binom{m}{3}\ell$$
    
    Second, observe that by several applications of Lemma~\ref{lemma:therearemanycycles} we can find two edge-disjoint subgraphs~$B,C \subseteq H$, vertex-disjoint to each other, both of them edge-disjoint with $R'$, and such that~$B$ is a copy of~$B(m'/\ell,\ell)$ and~$C$ is a cycle of length~$m'$ (observe that~$\ell$ divides $m'$ since~$R'$ is~$C_\ell$-divisible).
    
    Now two suitable applications of Lemma~\ref{lemma:ladders} yield the result. 
    More precisely, first apply Lemma~\ref{lemma:ladders} with $R'$ in the r\^ole of~$R$ to obtain~ a $(R',C)$-transformer $L_1\subseteq H$ with at most~$m'\ell$ edges. 
    For the second application of Lemma~\ref{lemma:ladders} observe that, since~$R'\cup L_1$ contain at most~$m'(\ell+1)$ we may assume~$n$ is large enough so that~$\delta_2(H\setminus (R'\cup L_1))\geq (2/3+\eps/2)n$. 
    Hence, another application of Lemma~\ref{lemma:ladders} now with~$B$ in the r\^ole of~$R$ and~$H\setminus (R'\cup L_1)$ in the r\^ole of~$H$ yields the existence of a~$(B,C)$-transformer~$L_2\subseteq H$ edge disjoint with~$R'\cup L_1$. 

    Putting all this together, and recalling that both~$A'$ and $B$ contain a~$C_\ell$-decomposition, we have that the hypergraphs
    $$R\cup A'\cup L_1\cup C\cup L_2\cup B \quad \text{and} \quad  A'\cup L_1\cup C\cup L_2\cup B $$  
    contain~$C_\ell$-decompositions. 
    To finish the proof take~$A_2= A'\cup L_1\cup C\cup L_2\cup B$ and observe that each of the hypergraphs~$A'$,~$L_1$,~$L_2$,~$C$, and~$B$ contain at most~$m'\ell\leq 2\binom{m}{3}\ell^2$ edges. 
    \end{proof}

\subsection{Proof of Lemma~\ref{lemma:absorbinglemma}}
\label{subsection:proofabsorbing}
We can finally give the short proof of Lemma~\ref{lemma:absorbinglemma}.

\begin{proof}[Proof of Lemma~\ref{lemma:absorbinglemma}]
    Given~$R\subseteq H$, an application of Lemma~\ref{lemma:tours} yields the existence of $A_1\subseteq H$ edge disjoint from~$R$ such that 
    	\begin{enumerate}[\upshape (i)]
	    \item $A_1$ has a~$C_\ell$-decomposition,
	    \item $A_1\cup R$ contain a tour decomposition, and
	    \item $A_1\cup R$ spans at most $30\binom{m}{3}\ell(6\ell+1)$ vertices.
	\end{enumerate}
    Then, we apply Lemma~\ref{lemma:tourstocycles} to obtain~$A_2\subseteq H$, which is an absorber of~$R\cup A_1$. 
    It is straightforward to check that~$A=A_1\cup A_2$ has the desired properties. 
\end{proof}

\section{Final remarks} \label{section:final}

A natural question is what happens for the values of $\ell$ not covered by our Theorem~\ref{theorem:ldecomposcodegree}.
Our results do not cover $C^3_{\ell}$-decompositions for small values of $\ell$, i.e. $\ell \leq 8$.
As in the graph case, for short cycles it is likely that the behaviour of the decomposition threshold is different.

For $\ell = 4$ the $3$-uniform tight cycle $C^3_{4}$ is isomorphic to a tetrahedron $K^3_4$, i.e. a complete $3$-graph on four vertices.
Since every pair of vertices in $K^3_4$ has degree $2$, the obvious necessary divisibility conditions in a host $3$-graph which admits a $C^3_4$-decomposition are \begin{enumerate*}[{\upshape (i)}]
        \item total number of edges divisible by $4$,
        \item every vertex degree divisible by $3$, and
        \item every codegree divisible by $2$.
\end{enumerate*}
Say that a $3$-graph satisfying all three conditions is \emph{$K^3_4$-divisible}.
We define $\delta_{\smash{K^3_{4}}}$ as the asymptotic minimum codegree threshold ensuring a $K^3_4$-decomposition over $K^3_4$-divisible graphs (in analogy to $\delta_{\smash{C_{\ell}}}$ taken over $C_{\ell}$-divisible graphs).
The following construction shows that $\delta_{\smash{K^3_{4}}} \ge 3/4$.

\begin{exmpl}
    Let $k \ge 1$ be arbitrary, $d = 6k+2$ and $n = 12k+9$.
    Let $G_1$ be an arbitrary $d$-regular graph on $n$ vertices.
    Let $G$ be the graph on $2n$ vertices obtained by taking two vertex-disjoint copies of $G_1$ and adding every edge between vertices belonging to different copies, say those edges are \emph{crossing}.
    Now, form a $3$-graph $H$ as follows.
    Take a set $Z$ on $2n$ vertices and edges forming a complete $3$-uniform graph on $Z$.
    Then add two new vertices $x_1, x_2$.
    For each $z \in Z$, add the edge $x_1 x_2 z$.
    Identify a copy of the graph $G$ in $Z$ and, for each edge $z_1 z_2$ of $G$ add the edges $z_1 z_2 x_1$ and $z_1 z_2 x_2$.
\end{exmpl}

$H$ has $2n+2 = 24k+20$ vertices and $\delta_2(H) = d+n+1 = 18k+12$ (attained by any pair $x_1 z$ with $z \in Z$).
It is tedious but straightforward to check $H$ is $K^3_4$-divisible.
To see $H$ is not $K^3_4$-decomposable, we prove that the link graph $H(x_1)$ is not~$C^2_3$-decomposable.
Note $H(x_1)$ is isomorphic to the graph $G'$ obtained from $G$ by adding an extra universal vertex $x$.
Suppose $G'$ has a triangle decomposition.
There are $n^2$ crossing edges in $G$, at most $n$ of those can be covered with triangles using $x$.
Thus at least $n(n-1)$ crossing edges are covered with triangles which use one edge in a copy of $G_1$ and two crossing edges.
Thus we need at least $n(n-1)/2$ edges in the two copies of $G_1$, but those copies have $dn < n(n-1)/2$ edges, contradiction.

What is the smallest $\ell_0$ such that $\delta_{\smash{C^3_{\ell}}} = 2/3$ holds for all $\ell \ge \ell_0$?
The previous example and Theorem~\ref{theorem:ldecomposcodegree} show that $5 \leq \ell_0 \leq 10^7$.
Observe that our Absorbing Lemma %
works for all $\ell \ge 7$.
The bottleneck is our use of Theorem~\ref{theorem:fractional} in
the Cover-Down Lemma. %
New ideas are needed to close the gap.

Another question is what happens for $k$-graphs with $k \ge 4$.
It is not clear for us if Theorem~\ref{theorem:counterexample} indicates the emergence of a pattern where the necessary codegree to ensure cycle decompositions and Euler tours on $n$-vertex $k$-graphs is substantially larger than $(1/2 + o(1))n$.

\begin{quest}
    For $k \ge 4$, let $H$ be a $k$-graph on $n$ vertices.
    Is $\delta_{k-1}(H) \geq  ((k-1)/k + o(1))n$ a necessary and sufficient condition for the existence of cycle decompositions or Euler tours?
\end{quest}

\subsection*{Acknowledgments}
We thank Felix Joos for helpful discussions and suggestions,
and the second author wants to thank Allan Lo and Vincent Pfenninger for useful conversations.

\printbibliography

\appendix

\section{Proof of Lemma~\ref{lemma:wellbehaved}} \label{appendix:wellbehaved}

\begin{proof}
	The proof proceeds in three steps.
	First, we find $H_p \subseteq H$ by including each edge with probability $p$, and in the remainder $H_0 = H \setminus H_p$ we find an almost perfect $C_{\ell}$-packing $\mathcal{C}_0$,
	let $L_0 = H_0 \setminus E(\mathcal{C}_0)$ be the leftover edges. 
	Secondly, we correct the leftover $L_0$ in the vertices incident with $\Omega(n^2)$ many edges of $L_0$ by constructing cycles with the help of the edges in $H_p$.
	This provides us with a new cycle packing $\mathcal{C}_1 \subseteq L_0 \cup H_p$ whose new leftover $L_1 = H_0 \setminus E(\mathcal{C}_0 \cup \mathcal{C}_1)$ satisfies $\Delta_1(L_1) = o(n^2)$.
	Finally, we correct the new leftover $L_1$ in a similar way, fixing the pairs incident to $\Omega(n)$ edges in $L_1$.
	We get a cycle packing $\mathcal{C}_2 \subseteq L_1 \cup H_p$,
	and $\mathcal{C}_0 \cup \mathcal{C}_1 \cup \mathcal{C}_2$ will be the desired cycle packing.
	\medskip
	
	\noindent \emph{Step 1: Random slice and aproximate decomposition.}
	Note that $\delta^{(3)}_2(H) \ge 3 \eps n$.
	Now let $p = \gamma / 4$, and let $H_p \subseteq H$ be obtained from $H$ by including each edge independently with probability $p$.
	Using concentration inequalities (e.g. Theorem~\ref{theorem:chernoff}) we see that with non-zero probability
	\begin{equation}
		\Delta_2(H_p) \leq 2 p n, \text{ and } \delta^{(3)}_2(H_p) \ge 2 \eps p n. \label{equation:Hpneighbours}
	\end{equation}
	hold simultaneously for $H_p$.
	From now on we suppose $H_p$ is fixed and satisfies~\eqref{equation:Hpneighbours}.
	
	Let $H_0 = H \setminus H_p$.
	In $H_0$, construct a $C_{\ell}$-packing by removing edge-disjoint cycles, one by one, until no longer possible.
	We get a $C_\ell$-packing $\mathcal{C}_0$ in $H_0$, let $F_0 = E(\cC_0)$.
	By Erd\H{o}s' Theorem~\cite[Theorem 1]{Erdos1964} there exists $c > 0$ such that $L_0 = H_0 \setminus F_0$ has at most $n^{3 - 3c}$ edges.
	\medskip
	
	\noindent \emph{Step 2: Eliminating bad vertices.}
	Let $B_0 = \{ v \in V : \deg_{L_0}(v) \ge n^{2 - 2c} \}$.
	Since $|L_0| \leq n^{3 - 3c}$, by double-counting we have $|B_0| \le 3 n^{1 - c}$.
	
	For each $b \in B_0$, let $G_b$ be the subgraph of $L_0(b)$ obtained after removing the vertices of $B_0$.
	Note that $L_0(b) - G_0$ has at most $|B_0|n \leq 3n^{2-c}$ edges.
	Now, let $\cP_b$ be a maximal edge-disjoint collection of paths of length $3$ in $G_b$.
	Since every graph on $n$ vertices with at least $n+1$ edges contains a path of length $3$,
	then $G_b - E(\cP_b)$ has at most $n$ edges.
	All together, we deduce that the number of edges in $L_0(b) - E(\cP_b)$ satisfies
	\begin{align}
		|L_0(b)| - |E(\cP_b)| \leq 3n^{2-c} + n \leq 4 n^{2-c}. \label{equation:wellbehaved-L0}
	\end{align}
	
	Since $G_b$ contains at most $n^2$ edges, we certainly have $|\cP_b| \leq n^2$.
	Let $\cP_b$ be a collection of tight paths on five vertices obtained by replacing each $v_0 v_1 v_2 v_3$ in $\cP_b$ with the tight path $v_0 v_1 b v_2 v_3$ in $L_0$.
	Note that any two distinct $P_1, P_2 \in \cP_b$ are edge-disjoint,
	and for two distinct $b, b' \in B_0$, and $P \in \cP_b$, $P' \in \cP_{b'}$, since $b' \notin V(G_b)$ we have $P, P'$ are edge-disjoint.
	Thus the union $\cP = \bigcup_{b \in B_0} \cP_b$ is an edge-disjoint collection of tight paths on $5$ vertices.
	
	Select $\gamma', \mu', \eps'$ such that $1/n \ll \gamma' \ll \mu' \ll \eps' \ll \gamma, \eps, 1/\ell$.
	We wish to apply Lemma~\ref{lemma:extending} to extend $\cP$ into cycles.
	We claim $\cP$ is $\gamma'$-sparse.
	Let $S \in \binom{V(H)}{2}$.
	Since $|\cP| \leq |B_0| n^2 \leq 3 n^{3 - c} \leq \gamma' n^{3}$,
	certainly $\cP$ contains at most $|\cP| \leq \gamma' n^{3}$ paths of type $0$ for $S$.
	Now, note that for each $b \in B_0$, $P \in \cP_b$ can have at most $2n$ paths of type $1$ for $S$,
	thus $\cP$ has at most $|B_0|2n \leq 6n^{2-c} \leq \gamma' n^{2}$ paths of type $1$ for $S$.
	Analogously, for each $b \in B_0$, $P \in \cP_b$ can have at most $1$ path of type $2$ for $S$,
	thus $\cP$ has at most $|B_0| \leq 3n^{1-c} \leq \gamma' n$ paths of type $2$ for~$S$.
	Thus $\cP$ is $\gamma'$-sparse.
	
	Recall that $L_0$ is edge-disjoint with $H_p$.
	Inequalities \eqref{equation:Hpneighbours} together with $p = \gamma/4$ and $\eps' \ll \gamma, 1/\ell$, show that we can use Corollary~\ref{corollary:pathextension} (with $U = V(H)$) and deduce that for each $P \in \cP$, there exists at least $\eps' n^{\ell - 5}$ copies of $C_{\ell}$ in $L_0 \cup H_p$ which extend $P_i$ using extra edges of $H_p$ only.
	
	We apply Lemma~\ref{lemma:extending} with $\eps', \mu', \gamma', \ell, 5, L_0, H_p, \cP$ playing the rôle of $\eps, \mu, \gamma, \ell, \ell', H_1, H_2, \cP$ respectively, to obtain a $C_{\ell}$-decomposable graph $F_1 \subseteq L_0 \cup H_p$ such that $E(\cP) \subseteq F_1$ and
	\begin{equation}
		\Delta_2(F_1 \setminus E(\cP)) \leq \mu'n. \label{item:wellbehaved-sparseC1}
	\end{equation}
	Since $F_0$, $F_1$ are edge-disjoint, $F_0 \cup F_1$ is $C_{\ell}$-decomposable.
	Let $L_1 = H_0 \setminus ( F_0 \cup F_1 )$. 
	Observe that, if $v \notin B_0$, then $\deg_{L_1}(v) \leq \deg_{L_0}(v) <n^{2-2c}$ by definition.
	Moreover, if $v \in B_0$, then each edge in $E(\cP_v)$ is in $F_1$, and hence~\eqref{equation:wellbehaved-L0} implies $\deg_{L_1}(v) \leq |L_0(v)| - |E(\cP_v)| \leq 4n^{2-c}$.
	Therefore,
	\begin{align}
		\Delta_1(L_1) \leq 4n^{2-c}. \label{equation:wellbehaved-L1vertexsparse}
	\end{align}
	\noindent \emph{Step 3: Eliminating bad pairs.}
	Let $f = c/2$ and $B_1 = \{ xy \in \binom{V}{2} : \deg_{L_1}(xy) \ge n^{1 - f} \}$.
	From $|L_1| \leq |L_0| \leq n^{3 - 3c} \leq n^{3 - 6f}$ we deduce $|B_1| \leq n^{2 - 4f}$.
	Now consider $B_1$ as the set of edges of a $2$-graph in $V$.
	Each edge of $B_1$ incident to a vertex $x$ implies that $x$ belongs to at least $n^{1 - f}$ edges in $L_1$, and each of those edges participates in at most two of the edges in $B_1$ incident to $x$.
	So we have $\deg_{L_1}(x) \ge \frac{1}{2} n^{1 - f} \deg_{B_1}(x)$.
	Together with inequality~\eqref{equation:wellbehaved-L1vertexsparse} we deduce $\Delta(B_1) \leq 8 n^{1 - f}$.
	
	A path $P$ on $L_1$ is \emph{$B_1$-based} if $P =zxyw$ and $xy \in B_1$.
	Let $\cP_2$ be a maximal packing of $B_1$-based paths.
	For all $xy \in B_1$, it holds that $\deg_{L_1}(xy) - \deg_{E(\cP_2)}(xy) \leq 1$.
	Otherwise it would exist distinct $z, w \in N_{L_1 \setminus E(\cP_2)}(xy)$, and then $zxyw$ would a $B_1$-based path not in~$\cP_2$ which contradicts its maximality.
	
	We claim $\cP_2$ is $\gamma'$-sparse.
	For each $xy \in B_1$, let $\cP_{xy} \subseteq \cP_2$ be the paths whose two interior vertices are precisely $xy$.
	Clearly $|\cP_{xy}| \leq n$ and $\cP_2 = \bigcup_{xy \in B_1} \cP_{xy}$.
	Let $e \in \binom{V}{2}$.
	Since $|\cP_2| \leq \sum_{xy \in B_1} |\cP_{xy}| \leq n |B_1| \leq n^{3 - 4f} \leq \gamma' n^3$,
	there are at most $\gamma' n^3$ paths of type $0$ for $e$ in $\cP_2$.
	Recall that if $P = zxyw$ is a path of type $1$ for $e$, then we have $|e \cap \{z, x, y, w\}| = 1$.
	If $xy \in B_1$ satisfies $e \cap \{x, y\} = \emptyset$, then at most two paths in $\cP_{xy}$ can be of type $1$ for $e$
	and therefore there are at most $2|B_1| \leq 2n^{2 - 4f}$ paths of type $1$ for $e$ in $\cP_2$.
	We estimate the contribution of the pairs $xy \in B_1$ such that $|e \cap \{x,y\}| = 1$.
	Each such $xy$ contributes with at most $n$ paths of type $1$ for $e$ in $\cP_{xy}$.
	By~\eqref{equation:wellbehaved-L1vertexsparse}, the number of such $xy$ is at most $2 \Delta(B_1) \leq 16 n^{1 - f}$,
	thus the total contribution of those pairs is at most $16 n^{2-f}$.
	All together, the total number of paths of type $1$ for $e$ in $\cP_2$ is at most $2n^{2 - 4f}+16 n^{2-f} \leq \gamma' n^2$.
	If $e = \{a,b\}$ then $\cP_{a,b}$ does not contain any path of type $2$ for $e$, by definition of the path types.
	Thus the only possible contributions come from the pairs in $\cP_{a,x}$ and $\cP_{b,y}$ for some $x, y \in V(H)$; and each one of those sets contains at most $1$ path of type $2$ for $e$.
	Thus the total number of pairs of type $2$ for $e$ in $\cP_2$ is at most $2 \Delta(B_1) \leq 16 n^{1 - f} \leq \gamma' n$.
	Thus $\cP_2$ is $\gamma'$-sparse.
	
	Let $H'_p = H_p \setminus ( F_0 \cup F_1 )$.
	\eqref{equation:Hpneighbours} and~\eqref{item:wellbehaved-sparseC1}, together with $\mu' \ll \eps' \ll \gamma, 1/\ell$,
	allow us to use Corollary~\ref{corollary:pathextension} with $U = V(H)$,
	thus for each $P \in \cP_2$, there exists at least $\eps' n^{\ell - 4}$ copies of $C_{\ell}$ in $L_1 \cup H'_p$ which extend $P$ using extra edges of $H'_p$ only.
	Apply Lemma~\ref{lemma:extending} with the parameters $\eps', \mu', \gamma', \ell, 4, L_1, H'_p, \cP_2$ playing the rôles of $\eps, \mu, \gamma, \ell, \ell', H_1, H_2, \cP$ respectively, to obtain a $C_{\ell}$-decomposable $F_2 \subseteq L_1 \cup H'_p$ such that $E(\cP_2) \subseteq F_2$
	and	$\Delta_2(F_2 \setminus E(\cP_2)) \leq \mu'n$.
	
	We claim that $\Delta_2(L_1 \setminus F_2) \leq n^{1 - f}$.
	Indeed, if $xy \in B_1$, $\deg_{L_1 \setminus F_2}(xy) \leq \deg_{L_1}(xy) \leq n^{1-f}$ follows by definition,
	otherwise, $E(\cP_2) \subseteq F_2$ implies $\deg_{L_1 \setminus F_2}(xy) \leq \deg_{L_1}(xy) - \deg_{F_2}(xy) \leq 1$.
	Since $F_2$ and $F_0 \cup F_1$ are edge-disjoint, $F = F_0 \cup F_1 \cup F_2$ is a $C_{\ell}$-decomposable subgraph of $H$.
	We claim $L = H \setminus F$ satisfies $\Delta_2(L) \leq \gamma n$.
	Indeed, an edge not covered by $F$ is either in $H_p$ or in $L_1 \setminus F_2$.
	Thus we have $\Delta_2(L) \leq \Delta_2(H_p) + \Delta_2(L_1 \setminus F_2) \leq 2pn + n^{1-f} \leq \gamma n$,
	as required.
\end{proof}

\end{document}